\title{Ric\-ci sol\-i\-tons}
\author[A. Derdzinski]{Andrzej Derdzinski}
\institution{The Ohio State University}
\address[Columbus, Ohio, USA]{231 W.\ 18th Ave.\\43210 Columbus,\hskip4ptOhio,\hskip4ptUSA}
\email{andrzej@math.ohio-state.edu}
\begin{document}

\def\hs{\hskip.7pt}
\def\hh{\hskip.4pt}
\def\nh{\hskip-.7pt}
\def\hn{\hskip-.4pt}
\def\nnh{\hskip-1pt}
\def\bz{b\hs}
\def\bx{B}
\def\cn{\mathcal{C}}
\def\dm{\mathcal{D}}
\def\ex{E}
\def\ey{\mathcal{E}}
\def\ef{f}
\def\hatg{{\hat{g\hskip2pt}\hskip-1.3pt}}
\def\ky{\mathcal{K}}
\def\lx{L}
\def\jp{p}
\def\jq{q}
\def\sn{\mathcal{S}}
\def\js{s}
\def\jt{t}
\def\tc{T}
\def\ju{u}
\def\jv{v}
\def\jw{w}
\def\jx{x}
\def\jy{y}
\def\jz{z}
\def\vg{\varGamma}
\def\dv{\delta}
\def\ve{\varepsilon}
\def\zh{\zeta}
\def\lc{\lambda}
\def\pg{\varLambda}
\def\vx{\varXi}
\def\pr{\varPi}
\def\cf{\sigma}
\def\mf{\varSigma}
\def\kp{\tau}
\def\sy{\psi}
\def\dt{\hskip2.8pt\dot{\hskip-2.8pt\fy}}
\def\dd{\hskip2.8pt\ddot{\hskip-2.8pt\fy}}
\def\fy{\phi}
\def\jh{\chi}
\def\hc{\hbox{$I\hskip-3.3ptH$}}
\def\pc{\hbox{$I\hskip-3.3ptP$}}
\def\bbR{\mathrm{I\!R}}
\def\rto{\bbR\hskip-.5pt^2}
\def\rtr{\bbR\hskip-.7pt^3}
\def\bbC{{\mathchoice {\setbox0=\hbox{$\displaystyle\mathrm{C}$}\hbox{\hbox 
to0pt{\kern0.4\wd0\vrule height0.9\ht0\hss}\box0}} 
{\setbox0=\hbox{$\textstyle\mathrm{C}$}\hbox{\hbox 
to0pt{\kern0.4\wd0\vrule height0.9\ht0\hss}\box0}} 
{\setbox0=\hbox{$\scriptstyle\mathrm{C}$}\hbox{\hbox 
to0pt{\kern0.4\wd0\vrule height0.9\ht0\hss}\box0}} 
{\setbox0=\hbox{$\scriptscriptstyle\mathrm{C}$}\hbox{\hbox 
to0pt{\kern0.4\wd0\vrule height0.9\ht0\hss}\box0}}}}
\def\dimc{\dim_{\hh\bbC\hskip-1.2pt}}
\def\bbCP{\bbC\mathrm{P}}
\def\bbZ{\hbox{\sf Z\hskip-4ptZ}}
\def\df{d\hskip-.8pt\ef}
\def\bn{\hs\overline{\nh\nabla\nh}\hs}
\def\bg{\hskip1.8pt\overline{\hskip-1.8ptg\hskip0pt}\hskip0pt} 
\def\bs{\hskip.2pt\overline{\hskip-.2pt\mathrm{s}\hskip-.4pt}\hskip.4pt} 
\def\bd{\hskip.8pt\overline{\hskip-.8pt\Delta\hskip-.8pt}\hskip.8pt} 
\def\br{\hskip.7pt\overline{\hskip-.7pt\mathrm{Ric}\hskip-.7pt}\hskip.7pt}
\def\hyp{\hskip.5pt\vbox
{\hbox{\vrule width2.5ptheight0.5ptdepth0pt}\vskip2pt}\hskip.5pt}
\def\ie{\hbox{$I_{\ve\phantom/}^\chi$\hskip-2.8pt}}

\newtheorem{question}[theorem]{Question}
\newtheorem*{justification}{Uzasadnienie}
\def\NoBlackBoxes{\global\overfullrule 0pt}
\NoBlackBoxes

\section{The Ric\-ci flow}\label{pr}
In 1981 Richard Hamilton \cite{hamilton} initiated the study of the equation
\begin{equation}\label{rfl}
\frac d{dt}\,g(t)\,=\,\,-2\,\mathrm{Ric}\hh_{g(t)}\hskip.7pt.
\end{equation}
Equation (\ref{rfl}) -- the solutions of which are usually said to form the 
{\it Ric\-ci flow\/} -- is a condition imposed on an unknown smooth curve 
$\,t\mapsto g(t)$ of Riemannian metrics on a fixed manifold $\,M$. The 
condition consists in requiring the curve to have, at every $\,t\,$ in its 
domain interval, the derivative with respect to $\,t\,$ equal to $\,-2\,$ 
times the Ric\-ci tensor of the metric $\,g(t)$.

Solutions (trajectories) of the Ric\-ci flow are curves $\,t\mapsto g(t)\,$ 
emanating from a given initial metric $\,g(0)$, and defined on a maximal 
interval $\,[\hskip.7pt0,T)\,$ of the variable $\,t$, where $\,0<T\le\infty$.

In local coordinates $\,x^{\hs j}\nnh$, $\,j=1,\dots,\dim M$, (\ref{rfl}) 
constitutes a system of nonlinear partial differential equations of parabolic 
type, imposed on the componente $\,g_{jk}=g(e_j,e_k)\,$ of the metrics 
$\,g=g(t)\,$ belonging to our unknown curve. The symbol $\,e_j$ denotes 
here the $\,j$th coordinate vector field (so that the directional derivative 
in the direction of $\,e_j$ coincides with the partial derivative 
$\,\partial_j$ with respect to the $\,j$th coordinate). The functions 
$\,g_{jk}$ depend on $\,t\,$ and on the variables $\,x^{\hs j}\nnh$. The 
coordinate version of condition (\ref{rfl}) is rather complicated:
\[
\frac{\partial g_{jk}}{\partial t}\,=\,-2R_{jk}^{\phantom i}\hh,\hskip14pt
\mathrm{where}\hskip9ptR_{jk}^{\phantom i}
=\partial_p^{\phantom i}\varGamma_{\!jk}^p
-\partial_j^{\phantom i}\varGamma_{\!pk}^p
+\varGamma_{\!qp}^p\varGamma_{\!jk}^p-\varGamma_{\!jq}^p\varGamma_{\!pk}^q,
\]
$\varGamma_{\!jk}^p$ being the Christoffel symbols of the metric $\,g=g(t)$, 
given by $\,2\hs\varGamma_{\!jk}^p=g^{pq}(\partial_jg_{kq}+\partial_kg_{jq}
-\partial_qg_{jk})$, with $\,g^{jk}$ standing for the contravariant 
components of the metric (meaning that, at every point of the coordinate 
domain, the matrix $\,[g^{jk}]\,$ is the inverse of the matrix $\,[g_{jk}]$). 
In the above expressions for $\,R_{jk}^{\phantom i}$ and 
$\,2\hs\varGamma_{\!jk}^p$ we have used the Ein\-stein summing convention, so 
that repeated indices are summed over.

\section{The Ric\-ci flow in the proof of Poin\-car\'e's conjecture}\label{rp}
In \cite{hamilton} Hamilton proved the existence and uniqueness of a maximal 
Ric\-ci flow trajectory with any given initial metric $\,g(0)$, on every 
compact manifold. He also attempted to use this fact for proving the 
three-di\-men\-sion\-al Poin\-car\'e conjecture.

His proposed outline of such a proof (known as ``the Ha\-mil\-to\-n 
programme'') consisted of a specific series of steps. The last of those steps 
were only carried out in 2002 by Grigori Perelman 
\cites{perelman,perelman-r,perelman-f}.

At the same time Perelman also proved the much more general {\it Thur\-ston 
geometrization conjecture for three-di\-men\-sion\-al manifolds}.

A crucial part of Perelman's argument was provided by surgeries, needed when 
the Ric\-ci flow runs into a singularity in finite time ($T<\infty$). After 
the surgery the Ric\-ci flow is used again, in a topologically simpler 
situation.

\section{Ric\-ci sol\-i\-tons -- ``fixed points'' of the Ric\-ci 
flow}\label{sr}
A {\it Ric\-ci sol\-i\-ton\/} is a Riemannian metric $\,g=g(0)\,$ on a 
manifold $\,M$, evolving under the Ric\-ci flow in an {\it inessential\/} 
manner, in the sense that all the stages $\,g(t)\,$ coincide with $\,g(0)\,$ 
up to dif\-feo\-mor\-phisms and multiplications by positive constants 
(``rescalings''). In other words, such a metric represents a fixed point of 
the Ric\-ci flow in the quotient of the space of metrics on $\,M\,$ under 
the equivalence relation just described.

It is clear what the above definition means in the case of compact manifolds, 
due to the existence and uniqueness of a maximal Ric\-ci flow trajectory 
with any given initial metric. Without the compactness assumption, by a 
Ric\-ci sol\-i\-ton one means a Riemannian metric $\,g\,$ on a manifold 
$\,M$, for which equation (\ref{rfl}) has a solution satisfying the initial 
condition $\,g(0)=g\,$ and constituting an inessential evolution of the metric.

In \S\ref{rs} we will discuss a different characterization of Ric\-ci 
sol\-i\-tons, having the form of a differential equation.

\section{Blow-up limits}\label{gt}
Complete noncompact Ric\-ci sol\-i\-tons often arise as by-prod\-ucts of the 
Ric\-ci flow on compact manifolds -- blow-up limits (or, rescaling limits) of 
the metrics $\,g(t)\,$ restricted to suitable open sets, when the variable 
$\,t\in[\hs0,T)$ tends to $\,T\,$ (defined in \S\ref{pr}), while 
$\,T\,$ is finite.

The following result of Perelman \cite{perelman} (known as the 
``no\hskip.7pt-breathers theorem'') states that on compact manifolds 
one can eqivualently characterize Ric\-ci sol\-i\-tons by a condition 
seemingly much weaker than their original definition: 
\begin{theorem}
If in a trajectory\/ $\,t\mapsto g(t)\,$ of the Ric\-ci flow on a compact 
manifold there exist two distinct values of\/ $\,t\,$ such that the 
corresponding stages of the flow coincide up to a dif\-feo\-mor\-phism and 
rescaling, then the trajectory is a Ric\-ci sol\-i\-ton.
\end{theorem}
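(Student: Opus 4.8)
After relabelling we may assume that $\,g(t_2)=c\hh\phi^*g(t_1)\,$ for some $\,t_1<t_2\,$ in the domain interval, some dif\-feo\-mor\-phism $\,\phi\,$ of $\,M$, and some constant $\,c>0$; write $\,n=\dim M$. Three cases occur: $\,c=1\,$ (a {\it steady breather\/}), $\,c>1\,$ (an {\it expanding breather\/}), and $\,c<1\,$ (a {\it shrinking breather\/}). In each of them the plan is to exhibit a real-val\-ued functional $\,g\mapsto\Lambda(g)\,$ on Riemannian metrics on $\,M\,$ which is (i)~invariant under dif\-feo\-mor\-phisms, (ii)~invariant under those rescalings relevant to the case, (iii)~nondecreasing along every Ric\-ci flow trajectory, and (iv)~constant along a trajectory only when the latter is self-sim\-i\-lar, that is, generated by a gradient Ric\-ci soliton $\,\mathrm{Ric}+\nabla^2f=\kappa\hs g\,$ (with $\,\kappa=0$, $\,\kappa<0$, $\,\kappa>0\,$ in the three respective cases). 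Once such a $\,\Lambda\,$ is at hand, (i)--(ii) and the breather hypothesis yield $\,\Lambda(g(t_1))=\Lambda(g(t_2))$; by (iii) the function $\,t\mapsto\Lambda(g(t))\,$ is then constant on $\,[\hs t_1,t_2\hs]$; and (iv) shows $\,g(t_1)\,$ to be a gradient Ric\-ci soliton. As a gradient Ric\-ci soliton gives rise to an explicit self-sim\-i\-lar Ric\-ci flow, the uniqueness of Ric\-ci flow trajectories on compact manifolds (\S\ref{rp}) identifies the given trajectory with that self-sim\-i\-lar solution, so it is a Ric\-ci soliton.

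For $\,c=1\,$ and $\,c>1\,$ the functional comes from Perelman's $\,\mathcal F$-functional,
\[
\mathcal F(g,f)\,=\,\int_M(\mathrm s+|\nabla f|^2)\hs e^{-f}\hs dV\hs,
\]
with $\,\mathrm s\,$ the scalar curvature, and from its infimum $\,\lambda(g)=\inf\{\mathcal F(g,f):\int_Me^{-f}dV=1\}$, which equals the least eigenvalue of the operator $\,-4\Delta+\mathrm s\,$ and is hence attained, with $\,e^{-f/2}\,$ its positive smooth ground state. That $\,\lambda(g(t))\,$ is nondecreasing follows by evolving an auxiliary function backwards along the conjugate heat equation, which keeps the constraint intact and makes $\,\frac d{dt}\mathcal F=2\int_M|\mathrm{Ric}+\nabla^2f|^2e^{-f}dV\ge0$, with equality only if $\,\mathrm{Ric}+\nabla^2f=0$. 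Since $\,\lambda\,$ is dif\-feo\-mor\-phism-in\-var\-i\-ant and $\,\lambda(cg)=c^{-1}\lambda(g)$, one may take $\,\Lambda=\lambda\,$ in the steady case. For an expanding breather one notes first that its existence forces $\,\lambda(g(t))<0\,$ for $\,t\le t_2\,$ (if $\,\lambda(g(t_1))>0\,$ then $\,\lambda(g(t_2))=c^{-1}\lambda(g(t_1))<\lambda(g(t_1))\,$ contradicts monotonicity, and $\,\lambda(g(t_1))=0\,$ would make $\,g(t_1)\,$ Ric\-ci-flat, hence stationary under the flow, clashing with $\,c>1\,$ once volumes are compared); on the region where $\,\lambda<0\,$ the scale-in\-var\-i\-ant quantity $\,\Lambda=\lambda\cdot[\hs\mathrm{Vol}_g(M)\hs]^{2/n}\,$ is nondecreasing, with equality exactly on expanding gradient solitons.

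The shrinking case $\,c<1\,$ is harder and rests on Perelman's $\,\mathcal W$-en\-tro\-py,
\[
\mathcal W(g,f,\tau)\,=\,\int_M\bigl[\hs\tau(\mathrm s+|\nabla f|^2)+f-n\hs\bigr](4\pi\tau)^{-n/2}e^{-f}\hs dV\hs,
\]
subject to $\,\int_M(4\pi\tau)^{-n/2}e^{-f}dV=1$, on its infimum $\,\mu(g,\tau)=\inf_f\mathcal W(g,f,\tau)$, and on $\,\nu(g)=\inf_{\tau>0}\mu(g,\tau)$, which -- being an infimum over $\,\tau\,$ of a family transforming predictably under rescaling -- is invariant under both dif\-feo\-mor\-phisms and rescalings. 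Along the Ric\-ci flow coupled with $\,d\tau/dt=-1$, Perelman's pointwise identity gives $\,\frac d{dt}\mathcal W\ge0$, with equality precisely when $\,\mathrm{Ric}+\nabla^2f=(2\tau)^{-1}g$, a shrinking gradient soliton. Thus $\,\nu(g(t))\,$ is nondecreasing; the breather hypothesis together with the invariance of $\,\nu\,$ makes it constant on $\,[\hs t_1,t_2\hs]$, and unwinding the two infima recovers the shrinking soliton identity at $\,t_1$, which closes the argument as above.

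The obstacle here is analytic, not formal. One must establish existence and smoothness of the minimizers $\,f\,$ for $\,\mathcal F(g,\cdot)\,$ and $\,\mathcal W(g,\cdot,\tau)\,$ -- a log-Sob\-o\-lev--type variational problem, handled by the direct method followed by elliptic regularity -- and, in the shrinking case, check that for the metrics in play the infimum defining $\,\nu(g)\,$ is actually attained at some finite $\,\tau>0$, so that its equality clause is available. Most importantly, one must prove the pointwise monotonicity formulas for $\,\mathcal F\,$ and $\,\mathcal W$: they emerge from repeated integration by parts against solutions of the conjugate heat equation, combined with Boch\-ner-type identities, and it is the rigidity in their equality cases -- gradient solitons and nothing else -- that powers the whole scheme. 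This monotonicity computation, especially the one for $\,\mathcal W$, is the technical heart of the proof.
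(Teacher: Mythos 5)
The paper never proves this statement: in \S\ref{gt} it is quoted with a citation to Perelman \cite{perelman} (and to Ivey \cite{ivey} for $n=3$), so there is no internal argument to measure yours against. What you have written is a faithful outline of Perelman's own no\hskip.7pt-breathers proof -- the trichotomy $c=1$, $c>1$, $c<1$, the monotone invariants $\lambda$, $\lambda\,[\mathrm{Vol}]^{2/n}$ and $\nu$, and the rigidity of the equality cases of the $\mathcal F$- and $\mathcal W$-monotonicity formulas -- and the logical skeleton (invariance, plus monotonicity, plus rigidity, plus uniqueness of the flow on compact manifolds) is sound. Two points should be made explicit before this counts as a proof rather than a sketch. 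First, in the expanding case the chain is: $\lambda(g(t_2))=c^{-1}\lambda(g(t_1))$ by invariance and scaling, so $\lambda(g(t_1))>0$ contradicts monotonicity outright, while $\lambda(g(t_1))=0$ forces $\lambda\equiv0$ on $[t_1,t_2]$ and hence, via the equality case, a Ric\-ci-flat (stationary) metric incompatible with $c>1$; your parenthetical compresses this (the bare equality $\lambda=0$ does not by itself give Ric\-ci-flatness) but the argument is recoverable. Second, and more substantially, the entire scheme rests on the two monotonicity formulas and on existence, positivity and smoothness of the constrained minimizers of $\mathcal F$ and $\mathcal W$ -- you correctly flag this as the technical heart but do not supply it; the required analysis is of exactly the kind the paper develops in \S\S\ref{ln}--\ref{sd} (Lemma~\ref{logns}, Theorem~\ref{rot}) for the different purpose of proving Theorem~\ref{kzs}. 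In the shrinking case the attainment of $\inf_{\tau>0}\mu(g,\tau)$ is a genuine additional difficulty; Perelman sidesteps it by fixing $\tau(t)=t_0-t$ with $t_0$ determined by the breather data, so that only $\mu(g(t_i),\tau(t_i))$, not $\nu$, need be compared -- that variant is easier to close.
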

In dimension $\,3\,$ this was first proved by Thomas Ivey \cite{ivey}.

\section{The Ric\-ci-sol\-i\-ton equation}\label{rs}
One easily verifies that a metric $\,g\,$ on a manifold $\,M\,$ is a 
Ric\-ci sol\-i\-ton if and only if some vector field $\,w\,$ on $\,M\,$ 
satisfies the {\it Ric\-ci-sol\-i\-ton equation}
\begin{equation}\label{sol}
\pounds\nnh_w\phantom{^i}\hskip-3ptg\hh\,+\,\hh\mathrm{Ric}\hh\,
=\,\hskip.7pt\lambda\hskip.7pt g\hs,\hskip14pt\mathrm{where}\hskip7pt
\lambda\hskip7pt\text{\rm is\ a\ constant,}
\end{equation}
with $\,\mathrm{Ric}\,$ denoting the Ric\-ci tensor of $\,g$, and 
$\,\pounds\nnh_w\phantom{^i}\hskip-3ptg\,$ the Lie derivative of $\,g\,$ along 
$\,w$.

The term {\it Ric\-ci sol\-i\-ton\/} is also used for a Riemannian manifold 
$\,(M,g)$ satisfying (\ref{sol}) with some $\,w$, as well as for 
a Ric\-ci flow trajectory $\,t\mapsto g(t)$ in which the initial stage 
$\,g(0)\,$ (or, equivalently, every stage $\,g(t)$) has the property 
(\ref{sol}); the field $\,w\,$ may here depend on $\,t$.

The objects $\,w\,$ and $\,\lambda\,$ appearing in (\ref{sol}) will be called 
the {\it sol\-i\-ton vector field\/} and {\it sol\-i\-ton constant}.

The coordinate version of condition (\ref{sol}) reads
\[
w_{j\hh,\hs k}^{\phantom i}\hs+\hs w_{k\hh,\hh j}^{\phantom i}\hs
+\hs R_{jk}^{\phantom i}\,=\,\lambda\hskip.7pt g_{jk}^{\phantom i}\hh.
\]
Instead of $\,w_{j\hh,\hs k}^{\phantom i}$ one also writes 
$\,\nabla_{\!k}^{\phantom i}w_j^{\phantom i}$. One may express (\ref{sol}) 
directly in terms of the components $\,g_{jk}$ of $\,g$, and $\,w^{\hs j}$ of 
$\,w$, by replacing $\,R_{jk}^{\phantom i}$ with the formula in \S\ref{pr}, 
and the sum $\,w_{j\hh,\hs k}^{\phantom i}+\hs w_{k\hh,\hh j}^{\phantom i}$ 
with $\,\partial_k^{\phantom i}w_j^{\phantom i}
+\partial_j^{\phantom i}w_k^{\phantom i}
-2\hs\varGamma_{\!jk}^pw_p^{\phantom i}$, for 
$\,w_j^{\phantom i}=g_{jk}^{\phantom i}w^{\hs k}\nnh$.

\section{The topics discussed below}\label{kd}
Ric\-ci sol\-i\-tons are of obvious interest, due to their close relation 
with the Ric\-ci flow (\S\S\ref{sr}--\/\ref{gt}). Their characterization as 
Riemannian metrics satisfying a specific system of partial differential 
equations (\S\ref{rs}) suggests applying to their study the methods of 
geometric analysis that were used previously in similar situations.

The remaining part of this text deals with a few selected cases in which the 
approach just mentioned leads to better understanding of Ric\-ci sol\-i\-tons 
on {\it compact\/} manifolds. Examples of such Ric\-ci sol\-i\-tons are 
provided by Ein\-stein metrics (\S\ref{me}), K\"ah\-ler-Ric\-ci sol\-i\-tons 
(\S\ref{sk}), and some Riemannian products (\S\ref{po}). In contrast with the 
lowest dimensions, $\,n=2\,$ and $\,n=3$, where compact Ric\-ci sol\-i\-tons 
have long been classified (\S\ref{me}), the case $\,n\ge4\,$ is largely a {\it 
terra incognita}, as illustrated by the open problem described in \S\ref{po}.

\S\S\ref{gs}--\ref{dt} are devoted to stating and proving a result of Perelman 
which -- despite its rather esoteric nature -- is a substantial step toward 
understanding compact Ric\-ci sol\-i\-tons. In \S\ref{km} we in turn discuss 
theorems, proved in the years 1957--2004 by eight mathematicians, and together 
showing that K\"ah\-ler-Ric\-ci sol\-i\-tons form a natural class of {\it 
canonical metrics\/} on compact complex surfaces with positive or negative 
first Chern class.

The final part of this article (\S\S\ref{rh}--\ref{pt}) presents two classes 
of examples of Ric\-ci sol\-i\-tons on compact manifolds. They are, namely, 
Page's and B\'erard Bergery's Ein\-stein metrics \cite{page,berard-bergery}, 
and the Koi\-\hbox{so\hskip.7pt-}\hskip0ptCao K\"ah\-ler metrics 
\cite{koiso,cao-eg}. Their description is rather technical and to go through 
it one has to ``roll up the sleeves'' -- in contrast, for instance, to the 
case of homogeneous Ein\-stein manifolds with an irreducible isotropy 
representation, where the discussion and justification is very brief 
(\S\ref{me}). The constructions in \S\S\ref{rh}--\ref{pt} use con\-for\-mal 
changes of K\"ah\-ler metrics, that is, their multiplication by suitable 
positive functions. Conditions sufficient for such a change to yield a Ric\-ci 
sol\-i\-ton, introduced in \S\ref{kk}, constitute a system of sec\-ond-or\-der 
ordinary differential equations with boundary conditions. Known solutions of 
this system form three families, two of which correspond 
(see \S\S\ref{pb},\hs\ref{pk}) to the two classes of examples mentioned above, 
while the third one yields nothing new -- the Ric\-ci sol\-i\-tons arising in 
it are isometric to the Koi\-\hbox{so\hskip.7pt-}\hskip0ptCao metrics, as 
shown by Gideon Maschler \cite{maschler}. A proof of Maschler's result is 
given in \S\ref{pt}.

\section{Ein\-stein metrics}\label{me}
The most obvious class of Ric\-ci sol\-i\-tons on compact manifolds is 
provided by {\it Ein\-stein metrics}. They are defined to be Riemannian 
metrics $\,g\,$ satisfying (\ref{sol}) with $\,w=0$, that is, the {\it 
Ein\-stein condition}
\begin{equation}\label{ein}
\mathrm{Ric}\hh\,=\,\hskip.7pt\lambda\hskip.7pt g\hskip14pt\text{\rm for\ 
some\ constant}\hskip7pt\lambda\hh.
\end{equation}
The sol\-i\-ton constant $\,\lambda\,$ is then called the {\it Ein\-stein 
constant}.

In dimensions $\,n<4$, every compact Ric\-ci sol\-i\-ton is an Ein\-stein 
metric; this was proved by Hamilton \cite{hamilton-tr} for $\,n=2\,$ and by 
Ivey \cite{ivey} for $\,n=3$. For purely algebraic reasons, when $\,n<4$, 
equation (\ref{ein}) implies that the metric $\,g\,$ has constant 
sectional curvature. Locally, up to isometries and rescalings, such 
low-di\-men\-sion\-al manifolds are thus standard spheres, Euclidean spaces, 
or hyperbolic spaces.

By an {\it Ein\-stein manifold\/} one means a Riemannian manifold $\,(M,g)$ 
such that $\,g\,$ is an Ein\-stein metric.

The Ein\-stein property of the constant-curvature metrics mentioned above 
(spherical, Euclidean, hyperbolic) also follows for much more general reasons. 
Namely, {\it every homogeneous Riemannian manifold\/ $\,(M,g)$ with an 
irreducible isotropy representation is an Ein\-stein manifold}. The 
homogeneity assumption means that the isometry group of $\,g\,$ acts on 
$\,M\,$ transitively, while {\it irreducibility\/} refers here to the 
group $\,H_x$ of those isometries keeping a given point $\,x\in M\,$ fixed; 
$\,H_x$ acts, infinitesimally, on the tangent space $\,T_xM$. The Ein\-stein 
condition is here a trivial consequence of Schur's lemma and the fact that 
the Ric\-ci tensor -- being a natural invariant of the metric -- is 
preserved by all isometries.

Besides the spherical, Euclidean and hyperbolic  metrics, the above 
theorem also applies, for instance, to the canonical (Fu\-bi\-ni-Stu\-dy) 
metrics on complex projective spaces $\,\bbCP^m\nnh$, showing that, 
consequently, they are Ein\-stein metrics.

\section{K\"ah\-ler-Ric\-ci sol\-i\-tons}\label{sk}
First examples of non\hskip.7pt-Ein\-stein compact Ric\-ci sol\-i\-tons, 
representing all even dimensions $\,n\ge4$, were constructed in the early 
1990s by Norihito Koiso \cite{koiso} and (independently) Huai-Dong Cao 
\cite{cao-eg}. All their examples, as well as generalizations of those 
examples found by other authors 
\cites{feldman-ilmanen-knopf,dancer-wang,li-c}, are {\it K\"ah\-ler-Ric\-ci 
sol\-i\-tons}, in the sense of being, simultaneously, Ric\-ci sol\-i\-tons and 
K\"ah\-ler metrics.

A special case of K\"ah\-ler-Ric\-ci sol\-i\-tons is provided by {\it 
K\"ah\-ler-Ein\-stein metrics}, that is, K\"ah\-le\-r metrics which are also 
Ein\-stein metrics.

Recall that one of the possible (mutually equivalent) definitions of a 
K\"ah\-ler metric can be phrased as follows. It is a metric $\,g\,$ on a 
manifold $\,M\,$ such that some fixed linear automorphism $\,J\,$ of the 
tangent bundle $\,T\nnh M\,$ (that is, some smooth family $\,x\mapsto J_x$ of 
linear automorphisms $\,J_x:T_xM\to T_xM$) satisfies the conditions 
$\,J(Jv)=-\hh v$, $\,g(Jv,Jw)=g(v,w)$ and 
$\,\nabla_{\!v}(Jw)=J(\nabla_{\!v}w)\,$ for all smooth vector fields 
$\,v\,$ and $\,w$, where $\,\nabla\,$ denotes the Le\-vi-Ci\-vi\-ta 
connection of the metric $\,g$.

By a {\it K\"ah\-ler metric on a complex manifold\/} $\,M\,$ we in turn mean a 
K\"ah\-ler metric, in the above sense, on $\,M\,$ (treated as real manifold), 
for which the automorphism $\,J:T\nnh M\to T\nnh M\,$ with the required 
properties is the operator of multiplication by $\,i\,$ in the tangent spaces 
(naturally constituting complex vector spaces).

The details of the Koi\-\hbox{so\hskip.7pt-}\hskip0ptCao construction will be 
discussed in \S\ref{pk}. For now it should be mentioned that the 
Koi\-\hbox{so\hs-}\hskip0ptCao examples are K\"ah\-ler metrics on compact 
complex manifolds $\,M^m_k$ which, for integers $\,m,k$ with $\,m>k>0$, 
are defined as follows:
\vskip-7pt
\[
\begin{array}{l}
\,\,\hs M^m_k\text{\it\ is\ the\ total\ space\ of\ the\ holomorphic\ 
}\,\bbCP^1\nnh\nh\text{\it\ bundle\ over\ the}\\
\text{\it\ projective\ space\ 
}\,\nh\bbCP^{m-1}\nnh,\text{\it\ arising\ as\ the\ projective\ 
compactifica}\hs\hyp\hskip15pt(*)\\
\text{\it\ tion\ of\ the\ }\hs k\hyp\text{\it th\ tensor\ power\ of\ the\ 
tautological\ line\ bundle.}\\
\end{array}
\]
\vskip4pt
\noindent 
Thus, $\,m\,$ is the complex dimension of $\,M^m_k$. Note that $\,M^m_1$ can 
also be obtained by blowing up a point in $\,\bbCP^m\nnh$.

The same complex manifolds $\,M^m_k$ carry other, no less interesting Ric\-ci 
sol\-i\-tons, which are con\-for\-mally-K\"ah\-ler (though non-K\"ah\-ler) 
Ein\-stein metrics. They were constructed by Don N. Page \cite{page} for 
$\,m=2$ and by Lionel B\'erard Bergery \cite{berard-bergery} in dimensions 
$\,m\ge3$.

By a {\it con\-for\-mal\-ly-K\"ah\-ler metric\/} on a manifold $\,M\,$ we mean 
a Riemannian metric $\,g\,$ on $\,M\,$ admitting a positive function 
$\,\mu:M\to\bbR$ such that the product $\,\mu g\,$ is a K\"ah\-le\-r metric.

For details of Page's and B\'erard Bergery's examples, see \S\ref{pb}.

In complex dimensions $\,m\ge3$, the constructions presented here can be 
directly generalized -- as pointed out by their 
authors themselves \cite{koiso,cao-eg,berard-bergery} -- to a class of compact 
complex manifolds slightly larger than the family $\,M^m_k$ described above. 
Our discussion focuses on the manifolds $\,M^m_k$ for a practical reason: the 
definition of the larger class is rather cumbersome. That definition will, 
however, be introduced in the proof of Lemma~\ref{rhxpl}, which constitutes 
the initial step of the construction, thus allowing the reader to carry out 
the generalization just mentioned.

\section{An open problem}\label{po}
The Riemannian product of two Ric\-ci sol\-i\-tons having the same 
sol\-i\-ton constant $\,\lambda\,$ is again a Ric\-ci sol\-i\-ton. Using this 
fact, the Koi\-\hbox{so\hskip.7pt-}\hskip0ptCao examples (\S\ref{sk}), and 
Ein\-stein metrics (\S\ref{me}), one easily constructs non-Ein\-stein, 
non-K\"ah\-ler compact Ric\-ci sol\-i\-tons of any dimension $\,n\ge7$.

Gang Tian \cite{tian-pc}, in various lectures, and Huai-Dong Cao, in the paper 
\cite{cao-rp}, raised the following
\begin{question}\label{isthr}Does there exist a compact Ric\-ci sol\-i\-ton 
which is neither Ein\-stein nor locally K\"ah\-ler, and is not locally 
decomposable into a Riemannian product of lower-di\-men\-sion\-al manifolds?
\end{question}
If such an example exists, it must have a positive sol\-i\-ton constant; a 
finite fundamental group; and a scalar curvature which is both nonconstant and 
positive. The reason is that compact Ric\-ci sol\-i\-tons which fail to 
satisfy one of the conditions just listed are necessarily Ein\-stein metrics. 
These four facts were proved, respectively, by Jean-Pierre Bourguignon 
\cite{bourguignon}, back in 1974; Xue-Mei Li \cite{li-xm} in 1993 (and, 
independently, Manuel Fer\-n\'an\-dez-L\'o\-pez and Eduardo 
Gar\-c\'\i a-R\'\i o \cite{fernandez-lopez-garcia-rio} in 2004, as well as 
Zhenlei Zhang \cite{zhang} in 2007); Daniel H.\ Friedan \cite{friedan} in 
1985; and Ivey \cite{ivey} in 1993. The results of Hamilton 
\cite{hamilton-tr} and Ivey \cite{ivey} mentioned in \S\ref{me} also show that 
such an example would be of dimension $\,n\ge4$.

\section{Some notations and identities}\label{to}
To simplify our discussion, we introduce some symbols. Given a Riemannian 
manifold $\,(M,g)$, we let $\,\mathcal{F}M,\mathcal{X}M,\varOmega M\,$ and 
$\,SM$ denote the vector spaces of all smooth functions $\,M\to\bbR$, smooth 
vector fields on $\,M$, smooth $\,1$-forms on $\,M\,$ (that is, sections of 
the cotangent bundle) and, respectively, smooth symmetric $\,2$-ten\-sor 
fields on $\,M$. Examples of the latter are the metric $\,g$, its Ric\-ci 
tensor $\,\mathrm{Ric}$, and the Hess\-i\-an $\,\nabla\df\,$ of any function 
$\,f\in\mathcal{F}M$, that is, the covariant derivative of its differential  
$\,\df$. Besides the gradient $\,\nabla:\mathcal{F}M\to\mathcal{X}M\,$ and the 
differential $\,d:\mathcal{F}M\to\varOmega M$, interesting linear operators 
between pairs of these spaces also include the $\,g${\it-trace\/} 
$\,\mathrm{tr}_g:SM\to\mathcal{F}M$, as well as the $\,g${\it-divergence\/} 
$\,\dv:SM\to\varOmega M$, the {\it interior product\/} 
$\,\imath_v:SM\to\varOmega M$ by any $\,v\in\mathcal{X}M$, and 
the $\,g${\it-La\-pla\-cian\/} $\,\Delta:\mathcal{F}M\to\mathcal{F}M$, 
characterized by $\,\mathrm{tr}_gb=\mathrm{tr}\hskip2ptB$, if $\,b\in SM\,$ 
and $\,B\,$ is the linear en\-do\-mor\-phism of the tangent bundle 
$\,T\nnh M\,$ such that
\begin{equation}\label{gbv}
g(Bv,w)\,=\,b(v,w)\hskip14pt\mathrm{for}\hskip8ptv,w\in\mathcal{X}M\hh,
\end{equation}
and $\,(\dv b)_j^{\phantom i}
=g^{\hs pq}\nabla_{\!p}^{\phantom i}b_{pj}^{\phantom i}$, 
$\,\imath_vb=b(v,\,\cdot\,)\,$ and $\,\Delta f=\hh\mathrm{tr}_g\nabla\df$.

The $\,g${\it-trace\-less part\/} of a symmetric $\,2$-ten\-sor field 
$\,b\in SM\,$ is, by definition, the tensor field   
$\,\{b\}_0^{\phantom j}\in SM\,$ given by 
\begin{equation}\label{trz}
\{b\}_0^{\phantom j}\hs\,=\,\,\,b\hs\,\,-\,\,\hs(\mathrm{tr}_gb)\hs g/n\hs,
\hskip12pt\mathrm{where}\hskip8ptn\,=\,\dim M.
\end{equation}
The function $\,\mathrm{s}=\mathrm{tr}_g\mathrm{Ric}\,$ is called the {\it 
scalar curvature\/} of the metric $\,g$. The well-known identities (see, 
e.g.,\ \cite[formulae (2.4) and (2.9)]{derdzinski-maschler-lc})
\begin{equation}\label{toz}
\mathrm{a)}\hskip8pt2\hs\dv\hs\mathrm{Ric}=d\hh\mathrm{s}\hs,
\hskip14pt\mathrm{b)}\hskip8pt\dv b=dY\nh+\imath_v\mathrm{Ric}\hs,
\hskip14pt\mathrm{c)}\hskip8pt2\hs\imath_vb=d\hh Q
\end{equation}
hold for any function $\,f\in\mathcal{F}M$, its gradient $\,v=\nabla\!f\nnh$, 
Hess\-i\-an $\,b=\nabla\df\nnh$, its La\-pla\-cian $\,Y\nnh=\Delta f\nnh$, and 
$\,Q=g(v,v)$. We will also use the {\it nonlinear\/} differential operator 
$\,\mathcal{R}:\mathcal{F}M\to\mathcal{F}M$, given, in any Riemannian manifold 
$\,(M,g)$, by
\begin{equation}\label{rfe}
\mathcal{R}f\,=\,\hs\Delta f\,-\,\hs|\nabla\!f|^2\nh/\hh2\hs.
\end{equation}
One says that a symmetric $\,2$-ten\-sor field $\,b\in SM\,$ on a K\"ah\-ler 
manifold $\,(M,g)\,$ is {\it Her\-mit\-i\-an\/} if the en\-do\-mor\-phism 
$\,B:T\nnh M\to T\nnh M\,$ characterized by (\ref{gbv}) above is 
$\,\bbC$-lin\-e\-ar (that is, commutes with $\,J$). This is equivalent to 
skew-sym\-me\-try of the $\,2$-ten\-sor field $\,b(J\,\cdot\,,\,\cdot\,)$. For 
every $\,x\in M$ the operator $\,B_x:T_xM\to T_xM\,$ must then be 
di\-ag\-o\-nal\-izable and its eigen\-val\-ues have even multiplicities over 
$\,\bbR$. 
In particular,
\begin{equation}\label{grh}
g\hskip6pt\mathrm{and}\hskip7pt\mathrm{Ric}\hskip7pt\text{\rm are\ always\ 
Her\-mit\-i\-an.}
\end{equation}
Below -- and in the sequel -- the symbols $\,\jt\,$ and $\,\tc\,$ will be used 
in a way which has nothing to do with their meanings in \S\S\ref{pr}--\ref{rs}.

For functions $\,\jt,\jh:M\to\bbR\,$ on a manifold $\,M$, we will call 
$\,\jh\,$ a {\it  smooth function of\/} $\,\jt\,$ if $\,\jt\,$ is nonconstant 
and $\,\jh=G\circ\jt$, where $\,G$ is a smooth function on the interval 
$\,\varLambda=\jt(M)$, that is, on the range of $\,\jt$. Writing 
$\,(\,\,)\dot{\,}=\,d/d\jt$, we will form the first and second derivatives 
$\,\dot\jh=\dot G\circ\jt,\,\ddot\jh=\ddot G\circ\jt$, treating them 
simultaneously as functions $\,M\to\bbR$ and as functions of the variable 
$\,\jt\in\varLambda$. With a fixed Riemannian metric $\,g\,$ on $\,M$, for 
$\,\jh,\cf$, which are smooth functions of a  nonconstant function 
$\,\jt:M\to\bbR$, setting $\,\fy=e^\jt g(\nabla\jt,\nh\nabla\jt)/2$, we have 
the obvious equalities $\,\nabla\nnh\jh=\dot\jh\nabla\jt\,$ and
\begin{equation}\label{gns}
\mathrm{i)}\hskip8ptg(\nabla\hskip-1.5pt\cf,\nabla\nnh\jh)
=2\hh e^{-\jt}\dot\cf\dot\jh\hh\fy\hs,\hskip24pt
\mathrm{ii)}\hskip8ptd\jh=\dot\jh\,d\jt\hh.
\end{equation}
In addition, $\,\nabla d\jh=\dot\jh\nabla d\jt+\ddot\jh\,d\jt\otimes d\jt$. so 
that $\,\nabla d\hh e^\jt=e^\jt(\nabla d\jt+\hs d\jt\otimes d\jt)$ which, 
for $\,\kp=e^\jt\nnh$, gives 
$\,\nabla d\jt=e^{-\jt}\nabla d\kp-\hs d\jt\otimes d\jt$, and
\begin{equation}\label{ndh}
\nabla d\jh=\dot\jh e^{-\jt}\nabla d\kp
+(\ddot\jh-\dot\jh)\hs d\jt\otimes d\jt\hs.
\end{equation}

\section{Gradient Ric\-ci sol\-i\-tons}\label{gs}
One says that a given Ric\-ci sol\-i\-ton $\,(M,g)\,$ is {\it of the gradient 
type\/} when the sol\-i\-ton vector field $\,w\,$ with (\ref{sol}) (for some 
constant $\,\lambda$), may be chosen so as to be the gradient of a function. A 
Riemannian manifold $\,(M,g)\,$ is a gradient (type) Ric\-ci sol\-i\-ton if 
and only if there exists a {\it sol\-i\-ton function\/} 
$\,f:M\to\mathrm{I\!R}\,$ satisfying the {\it gradient sol\-i\-ton equation}
\begin{equation}\label{ndf}
\nabla\df\,+\,\hh\mathrm{Ric}\hh\,
=\,\hskip.7pt\lambda\hskip.7pt g\hs,\hskip14pt\mathrm{where}\hskip7pt
\lambda\hskip7pt\text{\rm is\ a\ constant.}
\end{equation}
The symbol $\,\nabla\,$ denotes here the Le\-vi-Ci\-vi\-ta connection of the 
metric $\,g\,$ (but will also be used for the $\,g$-grad\-i\-ent operator), 
while $\,\nabla\df\,$ is, as in \S\ref{to}, the Hess\-i\-an of $\,f$.

That (\ref{sol}) becomes (\ref{ndf}) if $\,2w=\nabla\!f\,$ (that is, when 
$\,2w\,$ is the gradient of a function $\,f$) follows from the identity 
$\,\pounds\nnh_vg=2\nabla\df$, valid for any function $\,f\,$ and its gradient 
$\,v=\nabla\!f$.

The gradient sol\-i\-ton equation (\ref{ndf}) has the following interesting 
consequences, first noted by Ha\-mil\-ton, cf.\ also \cite[p.\ 201]{chow}:
\begin{lemma}\label{stale}Condition\/ {\rm(\ref{ndf})} for a function\/ 
$\,f\,$ on a Riemannian manifold\/ $\,(M,g)\,$ implies constancy of three 
functions\/{\rm:} $\,\Delta\ef+\hs\mathrm{s}$, 
$\,\Delta\ef-g(\nabla\!\ef,\nabla\!\ef)+2\lc\ef$, and\/ 
$\,\mathcal{R}f+\hh\lambda f+\hs\mathrm{s}/2$, where\/ $\,\mathrm{s}\,$ is the 
scalar curvature, and\/ $\,\mathcal{R}f\nh=\Delta f\nh-|\nabla\!f|^2\nh/\hh2$.
\end{lemma}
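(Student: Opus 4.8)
The plan is to verify that each of the three functions has identically vanishing differential, so that --- $M$ being connected --- each is constant. For the first function this is immediate: applying $\,\mathrm{tr}_g\,$ to both sides of (\ref{ndf}) and using $\,\mathrm{tr}_g\nabla\df=\Delta f$, $\,\mathrm{tr}_g\mathrm{Ric}=\mathrm{s}\,$ and $\,\mathrm{tr}_g g=n\,$ gives $\,\Delta f+\mathrm{s}=n\lambda$. In particular $\,d(\Delta f)=-\hh d\hh\mathrm{s}$, an equality I will use twice below.

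Next I would apply the $\,g$-divergence $\,\dv:SM\to\varOmega M\,$ to (\ref{ndf}). Since $\,g\,$ is parallel, $\,\dv g=0$; by (\ref{toz}a), $\,\dv\hh\mathrm{Ric}=d\hh\mathrm{s}/2$; and (\ref{toz}b), applied to $\,b=\nabla\df\,$ with gradient $\,v=\nabla\!f\,$ and Laplacian $\,Y=\Delta f$, gives $\,\dv(\nabla\df)=d(\Delta f)+\imath_{\nabla f}\mathrm{Ric}$. Summing these three and invoking $\,d(\Delta f)=-\hh d\hh\mathrm{s}\,$ yields the one genuinely nontrivial intermediate identity, $\,\imath_{\nabla f}\mathrm{Ric}=d\hh\mathrm{s}/2\,$ (equivalently $\,\mathrm{Ric}(\nabla\!f,\,\cdot\,)=d\hh\mathrm{s}/2$).

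It then remains only to differentiate $\,|\nabla\!f|^2=g(\nabla\!f,\nabla\!f)$. By (\ref{toz}c), $\,d\hh|\nabla\!f|^2=2\hs\imath_{\nabla f}(\nabla\df)$; substituting $\,\nabla\df=\lambda g-\mathrm{Ric}\,$ from (\ref{ndf}), and using $\,\imath_{\nabla f}g=\df\,$ together with the identity just obtained, one gets $\,d\hh|\nabla\!f|^2=2\lambda\hh\df-d\hh\mathrm{s}$. Hence $\,d(\Delta f-|\nabla\!f|^2+2\lambda f)=-\hh d\hh\mathrm{s}-(2\lambda\hh\df-d\hh\mathrm{s})+2\lambda\hh\df=0$, so the second function is constant. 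The third function is simply half the sum of the first two: by (\ref{rfe}), $\,\frac12[(\Delta f+\mathrm{s})+(\Delta f-|\nabla\!f|^2+2\lambda f)]=\Delta f-|\nabla\!f|^2/2+\lambda f+\mathrm{s}/2=\mathcal{R}f+\lambda f+\mathrm{s}/2$, so its constancy is automatic.

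There is no serious obstacle: the whole argument is a short chain of standard Riemannian identities. The only step calling for a little care is extracting $\,\imath_{\nabla f}\mathrm{Ric}=d\hh\mathrm{s}/2\,$ from the divergence of (\ref{ndf}) --- this is where the contracted second Bianchi identity (\ref{toz}a) enters, and once it is in place everything else is forced.
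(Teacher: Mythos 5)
Your proof is correct and follows essentially the same route as the paper, which obtains the three constancies by applying the operators $\,\mathrm{tr}_g$, $\,d\circ\mathrm{tr}_g-2\hs\dv+2\hs\imath_v$ and $\,\dv-\imath_v$ to (\ref{ndf}) and invoking the identities (\ref{toz}); your argument is the same computation unpacked into explicit steps (the intermediate identity $\,\imath_{\nabla\!f}\hh\mathrm{Ric}=d\hh\mathrm{s}/2\,$ and the observation that the third function is half the sum of the first two are exactly what those operator combinations encode).
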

\begin{proof}[Proof]Applying to both sides of (\ref{ndf}) the operators  
$\,\mathrm{tr}_g$, $\,d\circ\mathrm{tr}_g-2\hs\dv+2\hs\imath_v$ and 
$\,\dv-\imath_v$, we get our three claims as trivial consequences of 
(\ref{toz}).
\end{proof}
Constancy of the last two functions in the lemma is due to vanishing of their 
differentials, since all manifolds are here -- \hbox{by definition -- 
connected.}

Perelman \cite{perelman} proved the gradient property of compact Ric\-ci 
sol\-i\-tons:
\begin{theorem}\label{kzs}
Every compact Ric\-ci sol\-i\-ton is of the gradient type.
\end{theorem}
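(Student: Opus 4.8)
The plan is to show that every compact Ricci soliton admits a \emph{gradient} soliton vector field, by averaging the soliton equation against the flow of the given soliton vector field $w$ and extracting a potential function from the resulting structure. First I would observe that, given $w$ with $\pounds_w g+\mathrm{Ric}=\lambda g$, the Ricci flow trajectory $g(t)$ starting at $g$ consists of pull-backs $g(t)=c(t)\,\varphi_t^*g$ for a one-parameter family of diffeomorphisms $\varphi_t$ and positive scalars $c(t)$, reflecting the ``inessential evolution'' characterization of \S\ref{sr}. Taking the $t$-derivative recovers (\ref{sol}) with the time-dependent generator of $\varphi_t$. The point is to massage this into (\ref{ndf}).

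The cleanest route uses Perelman's functional. I would introduce, for a smooth positive function $u$ on the compact $M$, the entropy-type quantity
\begin{equation}\label{pfun}
\mathcal{W}(g,u)\,=\,\int_M\bigl(\mathrm{s}+|\nabla\!f|^2+f\bigr)\hh u\,,
\hskip14pt u=e^{-f},\ \ \int_M u=1\,,
\end{equation}
and recall its monotonicity along the Ricci flow coupled with the backward heat equation for $u$. For a Ricci soliton the $\mathcal{W}$-flow is, up to diffeomorphism and rescaling, stationary, so its minimizer $f_0$ (which exists on a compact manifold by standard elliptic/variational arguments) must be a critical point satisfying the Euler--Lagrange equation
\begin{equation}\label{elg}
2\hs\Delta f_0\,-\,|\nabla\!f_0|^2\,+\,\mathrm{s}\,+\,f_0\,=\,\text{const}\,.
\end{equation}
Then I would show that the critical metric together with $f_0$ actually solves the \emph{gradient} soliton equation $\nabla df_0+\mathrm{Ric}=\lambda g$: differentiating (\ref{elg}), combining with the contracted second Bianchi identity (\ref{toz}a) and the fact that $g$ is a soliton (hence $\dv\mathrm{Ric}$ and $\Delta\mathrm{s}$ are controlled), one gets that the symmetric tensor $\nabla df_0+\mathrm{Ric}-\lambda g$ is divergence-free; pairing it with $\nabla f_0$ and integrating by parts against $u$, an application of (\ref{toz}b)--(\ref{toz}c) and (\ref{rfe}) forces it to vanish identically. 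Constancy of the relevant auxiliary functions is exactly what Lemma~\ref{stale} supplies once we know $f_0$ is a gradient potential, so there is a consistency check available at each stage.

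An alternative, more hands-on approach avoids $\mathcal{W}$ entirely: write $w=\nabla h+\xi$ with $\dv\xi=0$ (Hodge decomposition of the $1$-form $\imath_wg$ on the compact $M$), plug into (\ref{sol}), take $\dv$ and use (\ref{toz}) to show that the Killing-like part $\xi$ satisfies a rigid overdetermined system; a Bochner argument on the compact manifold then shows $\xi$ is a Killing field that moreover preserves $\mathrm{Ric}$ and $\mathrm{s}$, so it can be absorbed, leaving $2\nabla dh+\mathrm{Ric}$-type terms. I expect the main obstacle to be precisely the step that eliminates the divergence-free part: showing $\xi$ is Killing requires leveraging compactness together with the soliton structure to run a Bochner/integration-by-parts estimate with the correct sign, and then a second argument that a Killing field arising this way can be subtracted from $w$ without destroying (\ref{sol}). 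In the $\mathcal{W}$-functional route the analogous difficulty is hidden in verifying that the minimizer's Euler--Lagrange equation upgrades from the scalar identity (\ref{elg}) to the full tensor equation (\ref{ndf}) — that upgrade is the real content of Perelman's theorem and is where I would spend the bulk of the effort.
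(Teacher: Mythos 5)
Your first route is the right one in spirit --- the paper's proof is exactly ``solve the scalar Euler--Lagrange equation, then upgrade to the tensor equation'' --- but the proposal stops short at the two points where all the work lies, and the mechanism you sketch for the upgrade would not close. Concretely, with $h=\nabla\df+\mathrm{Ric}-\lambda g$ and $\bz=\pounds\nnh_w\phantom{^i}\hskip-3ptg+\mathrm{Ric}-\lambda g$, the paper's decisive ingredient is the integration-by-parts identity
\[
\int_M|h|^2e^{-f}\,\mathrm{d}g\;+\;\int_M(\mathcal{R}f+\lambda f+\mathrm{s}/2)\,\psi\,\mathrm{d}g\;=\;\int_M\langle h,\bz\rangle\,e^{-f}\,\mathrm{d}g,\qquad \psi=\Delta e^{-f}+2\hh\dv\hh[e^{-f}w],
\]
valid for arbitrary $f$, $\lambda$, $w$; once $f$ solves $\mathcal{R}f+\lambda f+\mathrm{s}/2=0$ and $\bz=0$, it forces $\int_M|h|^2e^{-f}\mathrm{d}g=0$, hence $h=0$. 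Your version of the upgrade --- ``$h$ is divergence-free; pair with $\nabla f_0$ and integrate against $u$'' --- only yields $\int_M\langle h,\nabla df_0\rangle e^{-f_0}\mathrm{d}g=0$, because the weighted divergence $\dv h-\imath_{\nabla f_0}h$ equals $d\hs(\mathcal{R}f_0+\lambda f_0+\mathrm{s}/2)$ by (\ref{toz}), and that is $\int_M\langle h,\,h-\mathrm{Ric}+\lambda g\rangle e^{-f_0}\mathrm{d}g=0$, not $\int_M|h|^2e^{-f_0}\mathrm{d}g=0$: a weighted-divergence-free tensor orthogonal to one particular tensor need not vanish. To finish one must substitute $\mathrm{Ric}-\lambda g=-\pounds\nnh_w\phantom{^i}\hskip-3ptg$ from (\ref{sol}) and integrate by parts a second time against $w$ --- precisely the $\langle h,\bz\rangle$ term above --- and this step is absent from the proposal; you yourself flag it as ``the real content'' to be supplied later. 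You also dismiss the existence and smoothness of the minimizer $f_0$ as standard, whereas in the paper this is Rothaus's theorem (Theorem~\ref{rot}), whose proof via the logarithmic Sobolev functional occupies \S\S\ref{ln}--\ref{sd} and requires a genuinely delicate positivity and regularity argument for the minimizer (needed before one can take its logarithm). The monotonicity-of-$\mathcal{W}$ framing does not avoid any of this: the evolution computation whose integrand is $2|h|^2u$ is the same missing identity in different clothing.

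Your second route (Hodge decomposition plus Bochner) is not a viable shortcut. A posteriori the divergence-free part of $w$ is indeed a Killing field --- taking traces of $h=\bz$ gives $2\hs\dv w^\flat=\Delta f$, so $w-\nabla\nnh f/2$ is divergence-free and Killing once $h=0$ is known --- but no direct Bochner argument establishes this without first producing the potential $f$; the ``rigid overdetermined system'' you hope $\xi$ satisfies is essentially equivalent to the theorem itself. So the proposal correctly locates the difficulty on both routes but resolves it on neither.
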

The proof of this theorem in \S\ref{dt} is based on solvability of certain 
qua\-si-lin\-e\-ar elliptic equations, established by Oscar S.\ Rothaus 
\cite{rothaus} in 1981. Rothaus's result may be stated as follows:
\begin{theorem}\label{rot}For a compact Riemannian manifold\/ $\,(M,g)\,$ of 
dimension\/ $\,n\ge3$, the operator\/ $\,\mathcal{R}\,$ given by\/ 
{\rm(\ref{rfe})}, and any positive real number\/ $\,\lambda$, the assignment\/ 
$\,f\,\mapsto\,\mathcal{R}f+\hh\lambda f\,$ is a surjective mapping of the 
space of smooth functions\/ $\,M\to\bbR\,$ onto itself.
\end{theorem}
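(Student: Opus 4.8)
The plan is to linearize by the substitution $\,u=e^{-f/2}$. Since a one-line computation gives $\,\mathcal{R}f=-2\hh u^{-1}\Delta u\,$ and $\,\lambda f=-2\lambda\log u$, solving $\,\mathcal{R}f+\lambda f=h\,$ for a prescribed $\,h\in\mathcal{F}M\,$ is equivalent to finding a positive function $\,u\,$ with
\[
\Delta u\,+\,\lambda\hs u\log u\,+\,h\hs u/2\,\,=\,\,0\hs,
\]
a positive smooth solution $\,u\,$ of which yields the smooth solution $\,f=-2\log u\,$ of the original equation. (The obvious continuity method along $\,t\mapsto(\mathcal{R}f+\lambda f=th)$, started from the solution $\,f\equiv0\,$ at $\,t=0$, is awkward here: the linearization at a solution, $\,\varphi\mapsto\Delta\varphi-g(\nabla\!f,\nabla\varphi)+\lambda\varphi$, is a drift Laplacian shifted by $\,\lambda\,$ and need not be invertible.)

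To produce the required positive $\,u\,$ I would use a constrained variational argument: minimize
\[
J(u)\,=\,\int_M\bigl(|\nabla u|^2\,-\,\lambda\hs u^2\log u\,-\,h\hs u^2/2\bigr)\,dV_g
\]
over $\,\mathcal{C}=\{u\in H^1(M):\int_M u^2\,dV_g=1\}$. A minimizer $\,u\,$ satisfies, with some Lagrange multiplier $\,\mu\in\bbR$, the equation $\,\Delta u+\lambda u\log u+(\lambda+h+2\mu)\hh u/2=0$; since $\,J\,$ depends on $\,u\,$ only through $\,u^2\,$ and $\,|\nabla u|$, one may replace $\,u\,$ by $\,|u|\ge0$. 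The additive constant $\,\lambda+2\mu\,$ is then removed by a single rescaling $\,u\mapsto cu$: using $\,(cu)\log(cu)=(cu)\log u+(\log c)\hh cu$, one checks that for the appropriate $\,c>0\,$ the function $\,cu\,$ solves the equation with the original $\,h$, and hence $\,f=-2\log(cu)\,$ solves $\,\mathcal{R}f+\lambda f=h$.

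The heart of the matter is that $\,J\,$ is bounded below and coercive on $\,\mathcal{C}\,$ and attains its infimum there. Coercivity rests on the logarithmic Sobolev inequality on $\,(M,g)$, in the form: for every $\,\varepsilon>0\,$ there is $\,C_\varepsilon\,$ with
\[
\int_M u^2\log\bigl(u^2/\|u\|_{L^2}^2\bigr)\,dV_g\,\le\,\varepsilon\int_M|\nabla u|^2\,dV_g\,+\,C_\varepsilon\hs\|u\|_{L^2}^2
\]
for all $\,u\in H^1(M)$ -- a consequence of the subcritical Sobolev embedding $\,H^1(M)\hookrightarrow L^{2n/(n-2)}(M)$, available because $\,n\ge3$. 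With $\,\varepsilon=1/\lambda\,$ and $\,\|u\|_{L^2}=1$, this gives $\,J(u)\ge\frac12\int_M|\nabla u|^2\,dV_g-C$, so a minimizing sequence is bounded in $\,H^1(M)$. Passing to a subsequence converging weakly in $\,H^1(M)\,$ and strongly in $\,L^2(M)\,$ and in $\,L^p(M)\,$ for $\,p<2n/(n-2)$, weak lower semicontinuity of $\,u\mapsto\int_M|\nabla u|^2\,$ together with strong continuity of the other two terms -- the term $\,\int_M u^2\log u\,$ being controlled along the sequence by $\,|s^2\log s|\le C(s^2+s^{2+\delta})\,$ for a fixed $\,\delta\,$ with $\,2+\delta<2n/(n-2)$ -- gives a minimizer $\,u\in\mathcal{C}$.

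Finally one must show the minimizer is smooth and positive. Elliptic bootstrap applied to $\,\Delta u=-\lambda u\log u-cu\,$ (with $\,c=(\lambda+h+2\mu)/2\,$ smooth, and using that $\,s\mapsto s\log s\,$ is H\"older continuous on bounded intervals) gives $\,u\in C^{2,\alpha}$. The naive Hopf maximum principle does not at once yield $\,u>0$ -- near a zero, $\,u\,$ is merely subharmonic -- but the strong maximum principle of V\'azquez does: near a point where $\,u\,$ is small one has $\,\Delta u=-\lambda u\log u-cu\le\beta(u)$, where $\,\beta(s)=-\lambda s\log s+(\max_M|c|)\hh s\,$ is nonnegative and nondecreasing for small $\,s$, vanishes at $\,0$, and satisfies $\,\int_{0^+}\bigl(\int_0^s\beta(t)\,dt\bigr)^{-1/2}\,ds=+\infty$; hence if $\,u\,$ vanished somewhere it would vanish on a neighborhood, and by connectedness $\,u\equiv0$, contradicting $\,\|u\|_{L^2}=1$. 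Thus $\,u>0$, so $\,u\,$ is bounded away from $\,0$, the nonlinearity $\,u\log u\,$ is a smooth function of $\,u$, and a further bootstrap gives $\,u\in C^\infty$; then $\,f=-2\log(cu)\in C^\infty\,$ solves $\,\mathcal{R}f+\lambda f=h$. I expect the logarithmic Sobolev inequality -- which is precisely the subject of Rothaus's paper -- to be the main ingredient, and the passage from a nonnegative weak minimizer to a strictly positive smooth solution (hence to a genuine, real-valued $\,f$), through the V\'azquez strong maximum principle and the non-smooth subcritical nonlinearity $\,s\log s$, to be the main technical obstacle.
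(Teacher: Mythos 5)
Your proposal is correct and follows essentially the same route as the paper: your functional $J$ is exactly $\lambda$ times the paper's $I_\varepsilon^\chi$ with $\varepsilon=1/\lambda$ and $\chi=-h/(2\lambda)$, minimized over the same $L^2$ unit sphere, with the log-Sobolev/Jensen lower bound, the replacement of the minimizer by its absolute value, the (outlined) regularity and strong-maximum-principle positivity step, and your rescaling $u\mapsto cu$ playing the role of the paper's additive normalization $f=-2(\kappa+\log\varphi)$. The only differences are presentational (you package the coercivity as an $\varepsilon$-form log-Sobolev inequality and name V\'azquez's criterion where the paper says ``a suitable local maximum principle''), and your level of detail on the regularity/positivity step matches the paper's own outline.
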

The proof of Theorem~\ref{rot}, outlined in \S\ref{sd}, uses the fact 
presented in the next section.

\section{Logarithmic So\-bo\-lev inequalities}\label{ln}
The above term refers to a type of estimates, first studied in the late 1960s 
\cite{federbush,gross}. Rothaus's version \cite{rothaus} may be phrased as 
part (c) of the following lemma, in which, for $\,p\in[\hs1,\infty)\,$ and a 
compact Riemannian manifold $\,(M,g)\,$ of dimension $\,n$, we denote by 
by $\,\|\hskip2.3pt\|_p^{\phantom i}$ and 
$\,\|\hskip2.3pt\|_{p,1}^{\phantom i}$ the $\,L^p$ norm and its associated 
first So\-bo\-lev norm: 
$\,\|\varphi\|_{p,1}^p=\int_M[|\nabla\varphi|^p+\varphi\hh^p]\,\mathrm{d}g$ 
whenever $\,\varphi\in\mathcal{F}M$, where $\,\mathcal{F}M\,$ is the space of 
all smooth functions $\,M\to\bbR$, and $\,\mathrm{d}g\,$ the volume element of 
the metric $\,g$. In addition, we use the symbol $\,L^p_1M\,$ for the 
So\-bo\-lev space obtained as the completion of $\,\mathcal{F}M\,$ in the norm 
$\,\|\hskip2.3pt\|_{p,1}^{\phantom i}$, and identified in an obvious manner 
with a sub\-space of $\,L^p\nh M$. The classical {\it So\-bo\-lev 
inequality\/} and the resulting inclusion state that, whenever $\,p\in(1,n)\,$ 
and $\,\varphi\in\mathcal{F}M$,
\begin{equation}\label{nso}
\|\varphi\|_r^{\phantom i}\le C\|\varphi\|_{p,1}^{\phantom i}\hskip5pt
\text{\rm and}\hskip6ptL^p_1M\subset L^r\hskip-2ptM\hh,\hskip6pt
\text{\rm if}\hskip6pt1\le r\le np/(n-p)\hh,
\end{equation}
for a constant $\,C\,$ depending only on $\,(M,g),p\,$ and $\,r$.
\begin{lemma}\label{logns}For a compact Riemannian manifold\/ $\,(M,g)\,$ of 
dimension\/ $\,n\ge3$, a constant\/ $\,\ve\in\bbR$, \hbox{and a smooth 
function\/ $\hs\chi\nnh:\nnh M\nh\to\bbR$, let}
\begin{equation}\label{fnc}
\ie(\varphi)\,=\,\displaystyle{\int_M\,(\ve|\nabla\varphi|^2
-\varphi\hh^2\log\hh|\varphi|+\chi\hh\varphi^2)\,\mathrm{d}g}\hs,
\end{equation}
where\/ $\,\varphi\hs\log\hh|\varphi|\,$ by definition equals\/ $\,0\,$ on the 
zero set of\/ $\,\varphi$. Then the functional\/ $\,\ie:L^2_1M\to\bbR$, 
defined by\/ {\rm(\ref{fnc})},
  \begin{equivalence}
  \item is well defined, since\/ 
$\,\varphi\hh^2\log\hh|\varphi|\in L^1\hskip-1.3ptM\,$ whenever\/ 
$\,\varphi\in L^2_1M$,
  \item is continuous with respect to the So\-bo\-lev norm\/ 
$\,\|\hskip2.3pt\|_{2,1}^{\phantom i}$,
  \item assumes a minimum value on the set\/ 
$\,\varSigma=\{\varphi\in L^2_1M:\|\varphi\|_2^{\phantom i}=1\}$, provided 
that\/ $\,\ve>0$.
  \end{equivalence}
Every\/ $\,\varphi\in\varSigma\,$ realizing the minimum\/ $\,\kappa\,$ of the 
functional\/ $\,\ie$ on\/ $\,\varSigma$, when\/ $\,\ve>0$, is in addition a 
distributional solution of the equation
\begin{equation}\label{dys}
\ve\hh\Delta\varphi\,+\,\varphi\hs\log\hh|\varphi|\,
+\,(\kappa-\chi)\hs\varphi\,=\,0\hh.
\end{equation}
\end{lemma}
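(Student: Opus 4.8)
The plan is to run the classical direct method of the calculus of variations, the one delicate ingredient being the logarithmic nonlinearity $\,-\fy^2\log\hh|\fy|$. First I would fix a number $\,\delta>0\,$ with $\,2<2+\delta<2n/(n-2)\,$ (possible since $\,n\ge3$); the So\-bo\-lev inequality (\ref{nso}) then gives a continuous inclusion $\,L^2_1M\subset L^{2+\delta}M$, which is moreover compact by the Rellich--Kondrachov theorem (this is exactly what the choice $\,2+\delta<2n/(n-2)\,$ is for). Next I would record the elementary estimate: the function $\,t\mapsto t^2\log\hh|t|\,$ is bounded on $\,[-1,1]$, and satisfies $\,t^2\log\hh|t|\le|t|^{2+\delta}\nh/\delta\,$ for $\,|t|\ge1\,$ (since $\,\log\hh|t|\le|t|^\delta\nh/\delta\,$ there), so altogether $\,|\fy^2\log\hh|\fy||\le1/(2e)+|\fy|^{2+\delta}\nh/\delta$. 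From this, $\,\fy^2\log\hh|\fy|\in L^1M\,$ whenever $\,\fy\in L^2_1M$, so all three terms in (\ref{fnc}) are integrable, establishing (a). For (b): if $\,\fy_k\to\fy\,$ in $\,L^2_1M$, then $\,\fy_k\to\fy\,$ in $\,L^{2+\delta}M$; along an arbitrary subsequence I extract a further one converging to $\,\fy\,$ almost everywhere and dominated by some $\,\psi\in L^{2+\delta}M$, whence (by the estimate above) $\,|\fy_k^2\log\hh|\fy_k||\le1/(2e)+\psi^{2+\delta}\nh/\delta\in L^1M$, so dominated convergence gives $\,\int_M\fy_k^2\log\hh|\fy_k|\,\mathrm{d}g\to\int_M\fy^2\log\hh|\fy|\,\mathrm{d}g$, while the $\,\ve|\nabla\fy|^2\,$ and $\,\jh\fy^2\,$ terms depend continuously on $\,\fy\,$ with respect to $\,\|\hskip2.3pt\|_{2,1}^{\phantom i}$.

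For (c) the essential point is coercivity. From $\,\log t\le t^a/a\,$ (valid for all $\,t,a>0$) one gets $\,\fy^2\log\hh|\fy|\le|\fy|^{2+a}\nh/a\,$ pointwise; choosing $\,a>0\,$ small, interpolating $\,\|\fy\|_{2+a}\le\|\fy\|_2^{1-\theta}\|\fy\|_{2n/(n-2)}^\theta\,$ with $\,\theta=\theta(a)\to0\,$ as $\,a\to0\,$ (so that the exponent $\,\beta:=\theta(2+a)/2\,$ is $\,<1\,$ for $\,a\,$ small), and invoking (\ref{nso}) together with $\,\|\fy\|_2=1\,$ for $\,\fy\in\mf$, one reaches a bound $\,\int_M\fy^2\log\hh|\fy|\,\mathrm{d}g\le C'(\|\nabla\fy\|_2^2+1)^\beta$; Young's inequality then converts this into $\,\int_M\fy^2\log\hh|\fy|\,\mathrm{d}g\le(\ve/2)\|\nabla\fy\|_2^2+C$, valid on $\,\mf$, with $\,C\,$ depending only on $\,\ve,\jh,n\,$ and $\,(M,g)$. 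Since also $\,|\int_M\jh\fy^2\,\mathrm{d}g|\le\max_M|\jh|\,$ on $\,\mf$, this yields $\,\ie(\fy)\ge(\ve/2)\|\nabla\fy\|_2^2-C_1\,$ for every $\,\fy\in\mf$. Consequently $\,\kappa:=\inf_\mf\ie>-\infty$, and every minimizing sequence $\,\fy_k\in\mf\,$ is bounded in $\,L^2_1M$. The assumption $\,\ve>0\,$ enters decisively here.

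By reflexivity of $\,L^2_1M\,$ I then pass to a subsequence with $\,\fy_k\rightharpoonup\fy\,$ weakly in $\,L^2_1M$; the compact inclusions $\,L^2_1M\hookrightarrow L^2M\,$ and $\,L^2_1M\hookrightarrow L^{2+\delta}M\,$ give $\,\fy_k\to\fy\,$ strongly in both spaces, so $\,\|\fy\|_2=1$, i.e.\ $\,\fy\in\mf$. The argument of (b) yields $\,\int_M\fy_k^2\log\hh|\fy_k|\,\mathrm{d}g\to\int_M\fy^2\log\hh|\fy|\,\mathrm{d}g\,$ and $\,\int_M\jh\fy_k^2\,\mathrm{d}g\to\int_M\jh\fy^2\,\mathrm{d}g$, while weak lower semicontinuity of $\,\|\nabla\,\cdot\,\|_2\,$ gives $\,\ve\|\nabla\fy\|_2^2\le\liminf_k\ve\|\nabla\fy_k\|_2^2$. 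Hence $\,\ie(\fy)\le\liminf_k\ie(\fy_k)=\kappa\le\ie(\fy)$, so $\,\fy\,$ realizes the minimum, proving (c).

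It remains to derive the stated Euler--Lagrange equation. Let $\,\fy\in\mf\,$ attain $\,\kappa$. For $\,\zeta\in\mathcal{F}M\,$ and $\,|s|\,$ small put $\,\fy_s=(\fy+s\zeta)/\|\fy+s\zeta\|_2\in\mf\,$ and $\,h(s)=\ie(\fy_s)$. Using $\,\fy_s^2\log\hh|\fy_s|=\|\fy+s\zeta\|_2^{-2}(\fy+s\zeta)^2(\log\hh|\fy+s\zeta|-\log\|\fy+s\zeta\|_2)\,$ and $\,\int_M(\fy+s\zeta)^2\,\mathrm{d}g=\|\fy+s\zeta\|_2^2$, one checks that $\,h(s)=\|\fy+s\zeta\|_2^{-2}\,\ie(\fy+s\zeta)+\log\|\fy+s\zeta\|_2$, where $\,\ie\,$ on the right is evaluated at the \emph{unnormalized} argument. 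Since $\,u\mapsto u^2\log\hh|u|\,$ is of class $\,C^1\,$ on $\,\bbR\,$ with derivative $\,2u\log\hh|u|+u$, differentiation under the integral sign is legitimate (a dominating $\,L^1M$ function being produced, just as above, from $\,\fy\in L^{2+\delta}M\,$ and boundedness of $\,\zeta$), so $\,h\,$ is differentiable at $\,0$; as $\,s=0\,$ minimizes $\,h$, the equation $\,h'(0)=0$, after simplification, reads
\[
\ve\int_M\langle\nabla\fy,\nabla\zeta\rangle\,\mathrm{d}g\,
=\,\int_M\zeta\,(\fy\log\hh|\fy|+(\kappa-\jh)\fy)\,\mathrm{d}g
\]
for every $\,\zeta\in\mathcal{F}M$. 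Because $\,\int_M\langle\nabla\fy,\nabla\zeta\rangle\,\mathrm{d}g=-\int_M(\Delta\fy)\hh\zeta\,\mathrm{d}g\,$ by the definition of the distributional La\-pla\-cian, this is precisely the assertion that (\ref{dys}) holds distributionally (all the functions in sight lying in $\,L^1M$). The step I expect to be the main obstacle is the coercivity estimate of the second paragraph -- balancing $\,(\|\nabla\fy\|_2^2+1)^\beta$, $\,\beta<1$, against a small multiple of $\,\|\nabla\fy\|_2^2$ -- since that is the one place where the interplay of the logarithmic nonlinearity with the So\-bo\-lev exponent and the sign $\,\ve>0\,$ genuinely matters; the remaining steps are routine once that estimate is in hand.
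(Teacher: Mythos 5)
Your argument is correct, and it follows the same direct-method skeleton as the paper's proof in \S\ref{dl}: integrability and continuity of the logarithmic term, boundedness of $\,\ie\,$ from below on $\,\varSigma$, extraction of a minimizer via weak compactness combined with the Rellich--Kondrachov theorem, and a first-variation computation. Two key technical ingredients differ, however. For coercivity, the paper applies Jensen's inequality to the probability measure $\,\varphi^2\hh\mathrm{d}g\,$ to obtain $\,2\int_M\varphi^2\log\varphi\,\mathrm{d}g\le n\log C\|\varphi\|_{2,1}^{\phantom i}$ -- a genuinely logarithmic bound, in keeping with the log-Sobolev theme of \S\ref{ln} -- whereas you use the pointwise bound $\,\log t\le t^a\nh/a$, interpolation of $\,L^p$ norms, and Young's inequality to reach a power bound with exponent $\,\beta<1\,$ (your bookkeeping here is right: $\,\beta=an/4$, so any $\,a<4/n\,$ works). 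Both estimates suffice to absorb the logarithmic term into $\,\ve\|\nabla\varphi\|_2^2$; yours is more elementary and self-contained, the paper's is sharper and is itself the log-Sobolev inequality the section is named after. For the Euler--Lagrange equation, the paper varies along great circles $\,\psi_\theta^{\phantom i}=(\cos\theta)\varphi+(\sin\theta)\psi\,$ with $\,\psi\,$ $L^2$-orthogonal to $\,\varphi\,$ (so the constraint holds exactly and the multiplier $\,\kappa\,$ emerges by testing against $\,\psi=\varphi$), while you normalize linear variations and exploit the scaling identity $\,\ie(u/\|u\|_2^{\phantom i})=\|u\|_2^{-2}\ie(u)+\log\|u\|_2^{\phantom i}$; I checked your identity and the resulting weak equation, and both are correct, with the domination needed for differentiating under the integral supplied exactly as you say by the bound on $\,H'$ and the inclusion $\,L^2_1M\subset L^{2+\delta}M$. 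A smaller difference: for continuity you argue by dominated convergence along subsequences, where the paper proves the quantitative Lipschitz-type estimate (\ref{osz}) via the mean value theorem and H\"older's inequality; again both routes are valid.
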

To prove both Lemma~\ref{logns} (in \S\ref{dl}), and Theorem~\ref{rot} (in 
\S\ref{sd}), we will need the obvious equality
\begin{equation}\label{ieh}
\ie(\psi)\,=\,\ve\hs\|\psi\|_{2,1}^2+I_0^0(\psi)\,
+\,\langle(\chi-\ve)\psi,\psi\rangle_2^{\phantom i}\hskip10pt\text{\rm for}
\hskip7pt\psi\in L^2_1M\hh,
\end{equation}
where $\,\langle\,,\rangle_2^{\phantom i}$ is the inner product of 
$\,L^2\hskip-1.3ptM$, along with the (well known)
\begin{lemma}\label{pdczb}If a sequence\/ $\,\varphi_j^{\phantom i}$, 
$\,j=1,2,3,\dots\hs$, in the So\-bo\-lev space\/ $\,L^2_1M\,$ of a compact 
Riemannian manifold\/ $\,(M,g)\,$ of dimension\/ $\,n\ge3$ \hbox{is bounded in 
the So\-bo\-lev} norm\/ $\,\|\hskip2.3pt\|_{2,1}^{\phantom i}$, then, after\/ 
$\,\varphi_j^{\phantom i}$ has been replaced by a suitable sub\-se\-quence, 
there will exist a function\/ $\,\psi\in L^2_1M\,$ such that, as\/ 
$\,j\to\infty$, one has simultaneously the convergences\/ 
$\,\varphi_j^{\phantom i}\to\psi\,$ in the\/ $\,L^2$ and\/ $\,L^r$ norms, with 
any fixed\/ $\,r\in(2,2n/(n-2))$, the weak convergence\/ 
$\,\varphi_j^{\phantom i}\to\psi\,$ in the norm\/ 
$\,\|\hskip2.3pt\|_{2,1}^{\phantom i}$, and convergence of the norms\/ 
$\,\|\varphi_j^{\phantom i}\|_{2,1}^{\phantom i}$ to some real number\/ 
$\,\gamma\ge\|\psi\|_{2,1}^{\phantom i}$.
\end{lemma}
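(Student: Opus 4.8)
The plan is to produce $\psi$ together with all the asserted convergences by three successive passages to subsequences, each one invoking a standard fact: weak sequential compactness of bounded sets in the Hilbert space $L^2_1M$, the Rellich--Kondrachov compactness of the embedding $L^2_1M\hookrightarrow L^2M$, and finally mere boundedness of the real sequence $\|\hskip2.3pt\|_{2,1}$ evaluated on the $\varphi_j$. The only genuinely non-soft ingredient is the second one; everything else is bookkeeping with subsequences.

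First I would use that $L^2_1M$, with the norm $\|\hskip2.3pt\|_{2,1}$, is a Hilbert space, hence reflexive, so that the bounded sequence $\varphi_j$ has a weakly convergent subsequence; passing to it and relabeling, I obtain $\psi\in L^2_1M$ with $\varphi_j\rightharpoonup\psi$ weakly in $L^2_1M$. Since the inclusion $L^2_1M\hookrightarrow L^2M$ is a bounded linear map, it is continuous for the weak topologies, whence also $\varphi_j\rightharpoonup\psi$ weakly in $L^2M$.

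Next I would invoke the Rellich--Kondrachov theorem: on a compact Riemannian manifold the inclusion $L^2_1M\hookrightarrow L^2M$ is compact. As $\varphi_j$ is bounded in $L^2_1M$, a further subsequence converges in the $L^2$ norm; its limit, being simultaneously a weak $L^2$ limit of that subsequence, must coincide with $\psi$ by uniqueness of weak limits. Thus, after relabeling, $\varphi_j\to\psi$ in $L^2M$. To upgrade this at one stroke to convergence in $L^r$ for every fixed $r\in(2,2n/(n-2))$, I would use the Sobolev inequality (\ref{nso}) with $p=2$ and $r=2n/(n-2)$: it shows that $\varphi_j$, and also $\psi$, lie in $L^{2n/(n-2)}M$ with uniformly bounded norms, and the elementary interpolation estimate $\|u\|_r\le\|u\|_2^{\theta}\|u\|_{2n/(n-2)}^{1-\theta}$, valid for a suitable $\theta=\theta(r,n)\in(0,1)$, applied to $u=\varphi_j-\psi$, then forces $\|\varphi_j-\psi\|_r\to0$, since the first factor tends to $0$ while the second stays bounded.

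Finally, the real sequence $\|\varphi_j\|_{2,1}$ is bounded, so a further subsequence converges to some $\gamma\in\bbR$; and since the norm of any Banach space is weakly lower semicontinuous while $\varphi_j\rightharpoonup\psi$ in $L^2_1M$, one gets $\|\psi\|_{2,1}\le\liminf_j\|\varphi_j\|_{2,1}=\gamma$, which is the last claim. The hypothesis $n\ge3$ enters only to make $2n/(n-2)$ a finite exponent strictly larger than $2$, so that both the Sobolev embedding and the interpolation step make sense; accordingly, I expect the Rellich--Kondrachov step to be the one point that genuinely requires a theorem rather than routine functional analysis.
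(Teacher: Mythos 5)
Your proposal is correct and follows essentially the same route as the paper: Banach--Alaoglu (reflexivity of the Hilbert space $L^2_1M$) for the weak limit, Rellich--Kondrachov for the strong $L^2$ (and $L^r$) convergence, and the final inequality $\gamma\ge\|\psi\|_{2,1}^{\phantom i}$ from weak lower semicontinuity of the norm, which the paper proves by exactly the Schwarz-inequality computation you are implicitly citing. The only cosmetic difference is that you obtain the $L^r$ convergence by Sobolev embedding plus interpolation rather than by the compactness of $L^2_1M\hookrightarrow L^r\hskip-2ptM$ directly; both are fine.
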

\begin{proof}[Proof]Except for the last inequality 
$\,\gamma\ge\|\psi\|_{2,1}^{\phantom i}$, the existence of a sub\-se\-quence 
with the required properties is an obvious consequence of the 
Rel\-li\-ch-Kon\-dra\-shov and Ba\-na\-ch-A\-la\-o\-glu theorems; note that 
$\,L^2_1M\,$ is a Hil\-ber\-t space. On the other hand, the Schwarz inequality 
$\,|\langle\psi,\varphi_j^{\phantom i}\rangle_{2,1}^{\phantom i}|\le
\|\psi\|_{2,1}^{\phantom i}\|\varphi_j^{\phantom i}\|_{2,1}^{\phantom i}$ in 
$\,L^2_1M\,$ and the weak convergence $\,\varphi_j^{\phantom i}\to\psi$ imply, 
in the limit, that 
$\,\|\psi\|_{2,1}^2\le\gamma\hs\|\psi\|_{2,1}^{\phantom i}$ and either 
$\,\|\psi\|_{2,1}^{\phantom i}=0\le\gamma$, or 
$\,\|\psi\|_{2,1}^{\phantom i}\ne0\,$ and both sides may be divided by 
$\,\|\psi\|_{2,1}^{\phantom i}$.
\end{proof}

\section{Proof of Lemma~\ref{logns}}\label{dl}
Setting $\,H(\varphi)=\varphi\hh^2\log\hh|\varphi|\,$ if 
$\,\varphi\in\bbR\smallsetminus\{0\}\,$ and $\,H(0)=0$, we obtain a function 
$\,H\,$ of the variable $\,\varphi\in\bbR\,$ having a continuous derivative 
$\,H'\nnh$. With any fixed $\,r\in(2,\infty)$, both $\,H(\varphi)/|\varphi|^r$ 
and $\,H'(\varphi)/|\varphi|^{r-1}$ tend to zero as $\,|\varphi|\to\infty$, 
leading to the estimates $\,|H(\varphi)|\le c\hs(1+|\varphi|^r)]\,$ and 
$\,|H'(\varphi)|\le c\,\mathrm{max}\,(1,|\varphi|^{r-1})\,$ for 
$\,\varphi\in\bbR$, where $\,c>0\,$ depends only on $\,r$. 

Let $\,\varphi,\psi\in\bbR\,$ and $\,r\in(2,\infty)$. Since 
$\,|H'(\varphi)|\le c\,\mathrm{max}\,(1,|\varphi|^{r-1})$, La\-gran\-ge's 
classical mean value theorem implies that 
$\,|H(\varphi)-H(\psi)|
\le c\hs|\varphi-\psi|\,\mathrm{max}\,(1,|\zeta|^{r-1})$, for some 
$\,\zeta\,$ lying between $\,\varphi\,$ and $\,\psi$. Thus, 
$\,|H(\varphi)-H(\psi)|
\le c\hs|\varphi-\psi|\,\mathrm{max}\,(1,|\varphi|^{r-1}\nnh,|\psi|^{r-1})$. 
The H\"ol\-der inequality, with $\,q\,$ given by 
$\,r^{-1}\nh+q^{-1}\nh=1$, that is, $\,q=r/(r-1)$, now yields, for {\it 
functions\/} $\,\varphi,\psi\in L^2_1M\,$ and $\,r\in(2,\infty)$, the integral 
estimate
\begin{equation}\label{osz}
\|H(\varphi)-H(\psi)\|_1^{\phantom i}
\le c\hs\|\varphi-\psi\|_r^{\phantom i}\hs
(1+\|\varphi\|_r^r+\|\psi\|_r^r)^{1/q},
\end{equation}
in which $\,c\,$ depends on $\,r\,$ (and the right-hand side may be infinite).

Let $\,r=2n/(n-2)$. The relation $\,|H(\varphi)|\le c\hs(1+|\varphi|^r)$, and 
the inclusion $\,L^2_1M\subset L^r\hskip-2ptM\,$ in (\ref{nso}) imply 
integrability of $\,|\varphi|^r$ for $\,\varphi\in L^2_1M$, proving (a). Since 
$\,|I_0^0(\varphi)-I_0^0(\psi)|=\|H(\varphi)-H(\psi)\|_1^{\phantom i}$, while 
$\,\|\psi\|_r^{\phantom i}\le\|\varphi\|_r^{\phantom i}
+\|\varphi-\psi\|_r^{\phantom i}$, convergence of $\,\psi\,$ to $\,\varphi\,$ 
in the So\-bo\-lev norm $\,\|\hskip2.3pt\|_{2,1}^{\phantom i}$ implies, via 
(\ref{nso}), convergence in the norm $\,\|\hskip2.3pt\|_r^{\phantom i}$ and, 
consequently, also the relation $\,I_0^0(\varphi)\to I_0^0(\psi)$. The 
functional $\,I_0^0:L^2_1M\to\bbR\,$ is thus continuous, and (b) easily 
follows in view of (\ref{ieh}).

To obtain (c), we begin with convexity of the exponential function, that is, 
the {\it  Jen\-se\-n inequality}, which states that 
$\,\textstyle{\int_M}F\,d\mu\,\le\,\log\textstyle{\int_M}e^Fd\mu$ for 
any integrable function $\,F:M\to\bbR\,$ on a space $\,M\,$ carrying a 
probability measure $\,\mu$. Proof: it suffices to show this in the case of 
simple functions or, equivalently, verify that 
$\,q_1^{c_1}\nnh\ldots\,\hs q_k^{c_k}
\le c_1^{\phantom i}q_1^{\phantom i}+\ldots+c_k^{\phantom i}q_k^{\phantom i}$ 
whenever $\,q_j^{\phantom i}\in(0,\infty)\,$ and 
$\,c_j^{\phantom i}\in[\hs0,\infty)$, $\,j=1,\dots,k$, with 
$\,\sum_{j=1}^kc_j^{\phantom i}=1$, which is easily achieved by applying 
$\,\partial/\partial q_1^{\phantom i}$ to maximize the difference 
$\,q_1^{c_1}\nnh\ldots\hs q_k^{c_k}\nnh-c_1^{\phantom i}q_1^{\phantom i}-\ldots
-c_k^{\phantom i}q_k^{\phantom i}$ for fixed 
$\,q_2^{\phantom i},\dots,q_k^{\phantom i}$ and 
$\,c_1^{\phantom i},\dots,c_k^{\phantom i}$.

Obviously, $\,a\nh\int_M\varphi\hh^2\log\varphi\,\mathrm{d}g
=\int_M\varphi\hh^2\log\varphi^a\hs\mathrm{d}g\,$ if 
$\,a,\ve\in(0,\infty)\,$ and $\,\varphi\in\varSigma\,$ (notation as in 
(c)). Thus, $\,\int_M\varphi\hh^2\log\varphi^a\hs\mathrm{d}g
\le(2+a)\,\log\|\varphi\|_{2+a}^{\phantom i}$ from Jen\-sen's inequality  
$\,\textstyle{\int_M}F\,d\mu\,\le\,\log\textstyle{\int_M}e^Fd\mu\,$ for 
$\,d\mu=\varphi\hh^2\mathrm{d}g\,$ and $\,F=\log\varphi^a$. Setting 
$\,a=4/(n-2)\,$ and choosing $\,p=2\,$ in the So\-bo\-lev inequality 
(\ref{nso}), we get $\,2\int_M\varphi\hh^2\log\varphi\,\mathrm{d}g
\le n\log\hs C\|\varphi\|_{2,1}^{\phantom i}$. Since 
$\,\|\varphi\|_{2,1}^2=\int_M(|\nabla\varphi|^2+\varphi\hh^2)\,\mathrm{d}g$, 
this last inequality yields 
$\,\ie(\varphi)\ge\varPhi(\xi)+\mathrm{min}\hskip2.3pt\chi$, 
where $\,\varPhi(\xi)=\ve\hs(\xi^2-1)-(1+2/a)\log\hs C\xi\,$ for 
$\,\xi=\|\varphi\|_{2,1}^{\phantom i}\ge\|\varphi\|_2^{\phantom i}=1$. 
In addition, $\,\inf\hs\{\varPhi(\xi):\xi\in[\hs1,\infty)\}\hs>\hs-\infty$, so 
that, {\it if\/ $\,\ve>0$, the functional\/ $\,\ie$ is bounded from below on 
the set\/} $\,\varSigma$.

Let $\,\ve>0$. Using the above italicized conclusion, we denote by 
$\,\kappa$ the infimum of the functional $\,\ie$ on $\,\varSigma\,$ and fix 
a sequence $\,\varphi_j^{\phantom i}\in\varSigma$, $\,j=1,2,3,\dots\hs$, for 
which $\,\ie(\varphi_j^{\phantom i})\to\kappa\,$ as $\,j\to\infty$. The 
sequence $\,\varphi_j^{\phantom i}$ is thus bounded in the So\-bo\-lev norm  
$\,\|\hskip2.3pt\|_{2,1}^{\phantom i}$, since in the obvious equality 
$\,\ve\hs\|\nabla\varphi\|_2^{\phantom i}
=2\ie(\varphi)-2I_{\ve/2}^\chi(\varphi)$, for 
$\,\varphi=\varphi_j^{\phantom i}$, the terms $\,2\ie(\varphi)\,$ converge, 
and $\,2I_{\ve/2}^\chi(\varphi)\,$ are bounded from below (as we just saw). 
Let us now replace the sequence $\,\varphi_j^{\phantom i}$ with a 
sub\-se\-quence chosen as in Lemma~\ref{pdczb} for some real $\,\gamma\,$ and 
a limit function in $\,L^2_1M$, denoted by $\,\varphi\,$ (rather than 
$\,\psi$). The estimate (\ref{osz}) and the inclusion 
$\,L^2_1M\subset L^r\hskip-2ptM$ in (\ref{nso}) give 
$\,I_0^0(\varphi_j^{\phantom i})\to I_0^0(\varphi)$. As $\,j\to\infty$, 
(\ref{ieh}) for $\,\psi=\varphi_j^{\phantom i}$ therefore yields 
$\,\kappa=\ve\hs\gamma+I_0^0(\varphi)
+\langle(\chi-\ve)\varphi,\varphi\rangle_2^{\phantom i}
=\ie(\varphi)+\ve\hh(\gamma^2\nh-\|\varphi\|_{2,1}^2)$. Consequently,
\begin{equation}\label{kmi}
\kappa\,-\,\ie(\varphi)\,=\,\ve\hh(\gamma^2\nh-\|\varphi\|_{2,1}^2)\hh.
\end{equation}
Since $\,\kappa\le\ie(\varphi)$, we have 
$\,\gamma\le\|\varphi\|_{2,1}^{\phantom i}$ while, by Lemma~\ref{pdczb}, 
$\,\gamma\ge\|\varphi\|_{2,1}^{\phantom i}$, and so 
$\,\gamma=\|\varphi\|_{2,1}^{\phantom i}$. Thus, (\ref{kmi}) gives 
$\,\kappa=\ie(\varphi)$, proving (c).

Let us now fix $\,\varphi\in\varSigma\,$ such that $\,\ie(\varphi)\,$ has the 
minimum value $\,\kappa$. If $\,\psi\in\varSigma\,$ and 
$\,\langle\varphi,\psi\rangle_2^{\phantom i}=0$, putting 
$\,\psi_\theta^{\phantom i}=(\cos\theta)\varphi+(\sin\theta)\psi$, for 
$\,\theta\in\bbR$, we obtain a curve 
$\,\theta\mapsto\psi_\theta^{\phantom i}\in\varSigma$. The function  
$\,\theta\mapsto\ie(\psi_\theta^{\phantom i})\,$ is then differentiable and 
its derivative can be evaluated by differentiation under the integral symbol, 
yielding $\,d\hs[\ie(\psi_\theta^{\phantom i})]/d\theta
=\int_M(\partial\varPi_\theta^{\phantom i}/\partial\theta)
\,\mathrm{d}g$, where $\,\varPi_\theta^{\phantom i}$ denotes the integrand in 
(\ref{fnc}) with $\,\varphi\,$ replaced by $\,\psi_\theta^{\phantom i}$. 

This is immediate from Le\-besgue's dominated convergence theorem, since the 
absolute value $\,|\partial\varPi_\theta^{\phantom i}/\partial\theta|\,$ is 
bounded from above, uniformly in $\,\theta$, by an integrable function. In 
fact, the first and third terms in $\,\varPi_\theta^{\phantom i}$, 
differentiated with respect to $\,\theta$, yield a linear combination of 
$\,\cos2\theta$ and $\,\sin2\theta\,$ with coefficients that are integrable 
functions; note that due to the definition of $\,L^p_1M$, preceding formula 
(\ref{nso}), the distributional gradient $\,\nabla\psi\,$ of any function 
$\,\psi\in L^p_1M\,$ is a measurable vector field with a square-integrable 
$\,g$-norm $\,|\nabla\psi|:M\to\bbR$. The derivative with respect to 
$\,\theta\,$ of the second term in $\,\varPi_\theta^{\phantom i}$ has in turn 
the absolute value $\,|\partial\psi_\theta^{\phantom i}/\partial\theta|\,
|H'(\psi_\theta^{\phantom i})|$, for $\,H\,$ as at the beginning of the proof; 
the inequality 
$\,|H'(\varphi)|\le c\,\mathrm{max}\,(1,|\varphi|^{r-1})$ established there, 
along with the obvious fact that $\,|\psi_\theta^{\phantom i}|\,$ and 
$\,|\partial\psi_\theta^{\phantom i}/\partial\theta|\,$ do not exceed 
$\,2\hskip1.4pt\mathrm{max}\,(|\varphi|,|\psi|)$, gives
\[
|\partial\psi_\theta^{\phantom i}/\partial\theta|\,
|H'(\psi_\theta^{\phantom i})|\,
\le\,c\,\mathrm{max}\,(1,|\varphi|^r\nnh,|\psi|^r)\,
\le\,c\hs(1+|\varphi|^r\nh+|\psi|^r)
\]
with a new constant $\,c>0$. For $\,r=2n/(n-2)\,$ we now note that the 
inclusion $\,L^2_1M\subset L^r\hskip-2ptM\,$ in (\ref{nso}) yields 
integrability of $\,|\varphi|^r$ and $\,|\psi|^r\nnh$.

As $\,\varphi\,$ minimizes $\,\ie\,$ on $\,\varSigma$, the derivative 
$\,d\hs[\ie(\psi_\theta^{\phantom i})]/d\theta\,$ at $\,\theta=0$ equals 
$\,0$. Differentiating the integrand, one consequently gets 
\begin{equation}\label{nfn}
-\hh\ve\hh\langle\nabla\varphi,\nabla\psi\rangle_2^{\phantom i}\,
+\,\,\langle\varphi\hs\log\hh|\varphi|\hs
+\hs(\kappa-\chi)\hs\varphi,\hs\psi\rangle_2^{\phantom i}\,=\,0
\end{equation}
for $\,\psi\in L^2_1M\,$ such that 
$\,\langle\varphi,\psi\rangle_2^{\phantom i}=0$. On the other hand, 
(\ref{nfn}) also holds for $\,\psi=\varphi$, since $\,\ie(\varphi)=\kappa$. We 
thus have (\ref{nfn}) whenever $\,\psi\in L^2_1M\,$ and -- in particular -- 
for test functions $\,\psi\in\mathcal{F}M$, which proves (\ref{dys}).

\section{Outline of the proof of Rothaus's theorem (Thm.~\ref{rot})}\label{sd}
We need another, well-known
\begin{lemma}\label{wbzwz}Let\/ $\,\varphi\,$ be any function in the 
So\-bo\-lev space\/ $\,L^2_1M$ of a compact Riemannian manifold\/ $\,(M,g)\,$ 
of dimension\/ $\,n\ge3$. The function\/ $\,\psi=|\varphi|\,$ then satisfies 
the conditions\/ $\,\psi\in L^2_1M\,$ and\/ 
$\,\|\psi\|_{2,1}^{\phantom i}\le\|\varphi\|_{2,1}^{\phantom i}$.
\end{lemma}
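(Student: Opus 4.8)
The plan is to reduce the statement to the standard fact that the absolute value is a Lipschitz function that fixes $0$, and then to identify the distributional gradient of $\,\psi=|\varphi|\,$ explicitly on the level sets $\,\{\varphi>0\}$, $\,\{\varphi<0\}\,$ and $\,\{\varphi=0\}$. First I would recall that, by definition of $\,L^2_1M\,$ as the completion of $\,\mathcal{F}M\,$ in the norm $\,\|\hskip2.3pt\|_{2,1}^{\phantom i}$, it suffices to prove the two assertions for $\,\varphi\in\mathcal{F}M\,$ with the constant in the inequality equal to $\,1$, since then the map $\,\varphi\mapsto|\varphi|\,$ is norm-nonincreasing on a dense subset and extends continuously to all of $\,L^2_1M$; one must additionally check that this extension agrees with the pointwise absolute value, which follows because $\,L^2$ convergence (a consequence of $\,\|\hskip2.3pt\|_{2,1}^{\phantom i}$ convergence) allows passing to a subsequence converging almost everywhere, and $\,|\,\cdot\,|\,$ is continuous on $\,\bbR$.

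For smooth $\,\varphi\,$ the key step is the chain-rule identity for the distributional gradient. On the open set $\,\{\varphi\ne0\}\,$ one has $\,\nabla|\varphi|=(\mathrm{sgn}\,\varphi)\nabla\varphi\,$ classically, so $\,|\nabla|\varphi||=|\nabla\varphi|\,$ there. The point to be verified is that $\,\nabla\varphi=0\,$ almost everywhere on the zero set $\,\{\varphi=0\}$; this is a standard consequence of the fact that a function is constant, hence has vanishing gradient, a.e.\ on any level set of a (say, Lipschitz or Sobolev) function. Granting this, $\,|\nabla|\varphi||\le|\nabla\varphi|\,$ holds a.e.\ on $\,M$, and since also $\,|\varphi|^2=\varphi^2$ pointwise, integrating gives
\[
\|\,|\varphi|\,\|_{2,1}^2\,=\,\int_M\bigl(|\nabla|\varphi||^2+|\varphi|^2\bigr)\,\mathrm{d}g\,\le\,\int_M\bigl(|\nabla\varphi|^2+\varphi^2\bigr)\,\mathrm{d}g\,=\,\|\varphi\|_{2,1}^2\hs,
\]
which is the claimed inequality; in fact equality holds for smooth $\,\varphi$, and the possibility of strict inequality appears only in the limit.

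The main obstacle I expect is the rigorous identification of the distributional gradient of $\,|\varphi|\,$ for a merely Sobolev function $\,\varphi$: one cannot differentiate pointwise, so one must either invoke the chain rule for Sobolev functions composed with Lipschitz maps, or argue by approximation. The cleanest route is approximation: take smooth $\,\varphi_j\to\varphi\,$ in $\,L^2_1M$, use the smooth case to get $\,\|\,|\varphi_j|\,\|_{2,1}^{\phantom i}\le\|\varphi_j\|_{2,1}^{\phantom i}$, observe that $\,|\varphi_j|\to|\varphi|\,$ in $\,L^2M\,$ (from $\,\bigl||\varphi_j|-|\varphi|\bigr|\le|\varphi_j-\varphi|$), deduce that $\,(|\varphi_j|)\,$ is bounded in $\,L^2_1M$, pass to a weakly convergent subsequence, and note that the weak limit must be $\,|\varphi|\,$ (by uniqueness of distributional limits) and that the norm is weakly lower semicontinuous, giving $\,\|\,|\varphi|\,\|_{2,1}^{\phantom i}\le\liminf\|\,|\varphi_j|\,\|_{2,1}^{\phantom i}\le\liminf\|\varphi_j\|_{2,1}^{\phantom i}=\|\varphi\|_{2,1}^{\phantom i}$, which simultaneously shows $\,|\varphi|\in L^2_1M$. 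This bypasses any delicate a.e.\ vanishing argument and uses only tools already employed in the excerpt (weak compactness, lower semicontinuity, the density definition of the Sobolev space).
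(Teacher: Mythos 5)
Your second stage --- approximate $\varphi\in L^2_1M$ by smooth $\varphi_j$, apply the smooth case to each $|\varphi_j|$, extract a weakly convergent subsequence, and use weak lower semicontinuity of the Sobolev norm --- is exactly the paper's second paragraph; its Lemma~\ref{pdczb} packages the weak compactness and the inequality $\gamma\ge\|\psi\|_{2,1}^{\phantom i}$ that you call lower semicontinuity. (Your opening reduction, by contrast, does not work as stated: $\varphi\mapsto|\varphi|$ is nonlinear, so knowing it is norm-nonincreasing on a dense subset does not let it ``extend continuously''; but your final paragraph supersedes that and is the right argument.) Where you genuinely diverge is the smooth case. The paper never computes the distributional gradient of $|\varphi|$; it regularizes by the smooth positive functions $\varphi_\theta^{\phantom i}=\sqrt{\varphi^2+\theta^2\,}$, notes $|\varphi_\theta^{\phantom i}-|\varphi||\le\theta$ and $|\nabla\varphi_\theta^{\phantom i}|=|\varphi/\varphi_\theta^{\phantom i}|\,|\nabla\varphi|\le|\nabla\varphi|$, and runs the same weak-compactness argument once more. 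Your route --- $\nabla|\varphi|=(\mathrm{sgn}\,\varphi)\nabla\varphi$ off the zero set, together with $\nabla\varphi=0$ a.e.\ on $\{\varphi=0\}$ (true for smooth $\varphi$, since the part of the zero set where $\nabla\varphi\ne0$ is a hypersurface of measure zero) --- correctly yields the equality $\|\,|\varphi|\,\|_{2,1}^{\phantom i}=\|\varphi\|_{2,1}^{\phantom i}$ at the level of the integrals. But there is one point you must not gloss over: the paper \emph{defines} $L^2_1M$ as the completion of $\mathcal{F}M$ in $\|\hskip2.3pt\|_{2,1}^{\phantom i}$, so exhibiting an $L^2$ distributional gradient for $|\varphi|$ does not by itself certify $|\varphi|\in L^2_1M$; you need either the Meyers--Serrin theorem ($H=W$) or an explicit sequence of smooth approximants --- which is precisely what the $\sqrt{\varphi^2+\theta^2\,}$ device supplies, and why the paper uses it. With that one citation (or with the paper's regularization substituted into your smooth case), your argument is complete; what your version buys is a sharper identity for smooth $\varphi$, at the cost of appealing to a standard theorem that the paper's self-contained proof is engineered to avoid.
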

\begin{proof}[Proof]Suppose first that $\,\varphi\,$ is smooth, and denote by 
$\,\theta\,$ any term of a fixed sequence of  positive numbers tending to 
$\,0$. For  the sequence 
$\,\varphi_\theta^{\phantom i}=\sqrt{\varphi^2\nh+\theta^2\,}\,$ of positive 
smooth functions, the obvious equality 
$\,\varphi_\theta^{\phantom i}-\psi
=\theta^2/(\varphi_\theta^{\phantom i}+\psi)\,$ gives 
$\,|\varphi_\theta^{\phantom i}-\psi|\le\theta$. One thus has uniform 
convergence $\,\varphi_\theta^{\phantom i}\to\psi$, which also yields 
$\,\|\varphi_\theta^{\phantom i}-\psi\|_2^{\phantom i}\to0\,$ and 
$\,\|\varphi_\theta^{\phantom i}\|_2^{\phantom i}\to\|\psi\|_2^{\phantom i}$. 
On the other hand, $\,\nabla\varphi_\theta^{\phantom i}
=(\varphi/\varphi_\theta^{\phantom i})\nabla\varphi$, while 
$\,|\varphi/\varphi_\theta^{\phantom i}|\le1$. so that 
$\,|\nabla\varphi_\theta^{\phantom i}|\le|\nabla\varphi|$ and 
$\,\|\nabla\varphi_\theta^{\phantom i}\|_2^{\phantom i}
\le\|\nabla\varphi\|_2^{\phantom i}$. Replacing our sequence with a suitable 
sub\-se\-quence, we obtain 
$\,\|\nabla\varphi_\theta^{\phantom i}\|_2^{\phantom i}\to c\,$ for some 
$\,c\le\|\nabla\varphi\|_2^{\phantom i}$. The sequence 
$\,\|\varphi_\theta^{\phantom i}\|_{2,1}^2
=\|\nabla\varphi_\theta^{\phantom i}\|_2^2
+\|\varphi_\theta^{\phantom i}\|_2^2$ thus converges to $\,\gamma^2\nnh$, for 
$\,\gamma=(\|\psi\|_2^2+c^2)^{1/2}\nnh$. In view of Lemma~\ref{pdczb}, 
replacing our sub\-se\-quence by a further sub\-se\-quence allows us to 
assume convergence of $\,\varphi_\theta^{\phantom i}$ in the norm $\,L^2$ to 
some limit function lying in $\,L^2_1M$. Since we already know that 
$\,\|\varphi_\theta^{\phantom i}-\psi\|_2^{\phantom i}\to0$, this limit 
function must be $\,\psi=|\varphi|$, which implies that 
$\,\psi\in L^2_1M$. The inequalities $\,\gamma\ge\|\psi\|_{2,1}^{\phantom i}$ 
(in Lemma~\ref{pdczb}) and $\,c\le\|\nabla\varphi\|_2^{\phantom i}$ in turn 
show that $\,\|\psi\|_{2,1}^2\le\gamma^2\nh=\|\psi\|_2^2+c^2\nh
\le\|\psi\|_2^2+\|\nabla\varphi\|_2^{\phantom i}=\|\varphi\|_{2,1}^2$, proving 
our claim in the case where $\,\varphi\,$ is smooth.

For any function $\,\varphi\in L^2_1M\,$ we have 
$\,\varphi_j^{\phantom i}\to\varphi\,$ as $\,j\to\infty$, in the norm 
$\,\|\hskip2.3pt\|_{2,1}^{\phantom i}$, with some sequence 
$\,\varphi_j^{\phantom i}$, $\,j=1,2,3,\dots\hs$, of smooth functions. Let 
$\,\psi_j^{\phantom i}=|\varphi_j^{\phantom i}|\,$ and $\,\psi=|\varphi|$. 
Thus, $\,\psi_j^{\phantom i}\in L^2_1M$, and the sequence 
$\,\|\psi_j^{\phantom i}\|_{2,1}^{\phantom i}
\le\|\varphi_j^{\phantom i}\|_{2,1}^{\phantom i}$ of norms is bounded; 
replacing it by a convergent sub\-se\-quence, we obtain 
$\,\|\psi_j^{\phantom i}\|_{2,1}^{\phantom i}\to\gamma
\le\|\varphi\|_{2,1}^{\phantom i}$ (since 
$\,\|\varphi_j^{\phantom i}\|_{2,1}^{\phantom i}
\to\|\varphi\|_{2,1}^{\phantom i}$). Lemma~\ref{pdczb} for the sequence 
$\,\psi_j^{\phantom i}$ allows us to choose a limit function in $\,L^2_1M$, 
which must coincide with $\,\psi\,$ due to the convergence 
$\,\psi_j^{\phantom i}\to\psi\,$ in the $\,L^2$ norm (obvious in view of the 
convergence $\,\varphi_j^{\phantom i}\to\varphi\,$ in $\,L^2\hskip-1.3ptM$, 
as $\,|\psi_j^{\phantom i}-\psi|\le|\varphi_j^{\phantom i}-\varphi|$), while 
the inequality $\,\|\psi\|_{2,1}^{\phantom i}\le\gamma\,$ in Lemma~\ref{pdczb} 
gives $\,\|\psi\|_{2,1}^{\phantom i}\le\|\varphi\|_{2,1}^{\phantom i}$.
\end{proof}
With the same assumptions and notations as in Lemma~\ref{logns}, given 
$\,\ve>0$, let $\,\varphi$ minimize $\,\ie\,$ on the set $\,\varSigma$. Then 
$\,\psi=|\varphi|\,$ also minimizes $\,\ie\,$ on $\,\varSigma$. Namely, 
Lemma~\ref{wbzwz} shows that $\,\psi\in\varSigma\,$ and $\,\psi\,$ satisfies 
the inequality 
$\,\|\psi\|_{2,1}^{\phantom i}\le\|\varphi\|_{2,1}^{\phantom i}$, while 
$\,\ie(\psi)\le\ie(\varphi)\,$ as a consequence of this inequality and 
(\ref{ieh}), since the two final terms in (\ref{ieh}) remain unchanged if 
$\,\psi\,$ is replaced by $\,|\psi|$.

In other words, we may also assume that the function $\,\varphi\,$ 
minimizing $\,\ie\,$ on $\,\varSigma\,$ is nonnegative. It then follows that 
$\,\varphi$ is positive everywhere and smooth. The reasons for the last 
conclusion can only be outlined here. First, using the De Giorgi and Nash 
method, one shows that $\,\varphi\,$ is of class $\,C^2$ with 
lo\-cal\-ly-H\"ol\-der second partial derivatives. Next, a suitable version 
of a local maximum principle, using geodesic coordinates, proves that the 
zero set of $\,\varphi\,$ is open. Its openness -- along with the condition 
$\,\|\varphi\|_2^{\phantom i}=1\,$ (which is a part of the definition of 
$\,\varSigma$, and prevents $\,\varphi\,$ from vanishing identically) -- 
implies positivity of $\,\varphi$. A boot\-strap\-ping-type argument in spaces 
of multiply differentiable functions with H\"ol\-der derivatives yields in 
turn smoothness of $\,\varphi$.
\begin{proof}[Proof of Theorem~\ref{rot}]Let us fix $\,(M,g)\,$ and 
$\,\lambda\,$ satisfying the hypotheses of the theorem, and a smooth function 
$\,\psi:M\to\mathrm{I\!R}$. For $\,\ve=1/\lambda\,$ and 
$\,\chi=-\psi/(2\lambda)\,$ there exists -- as we saw earlier -- a positive 
smooth solution $\,\varphi\,$ of (\ref{dys}) with some constant $\,\kappa$. 
Setting $\,f=-2(\kappa+\log\hh\varphi)$, one easily verifies that (\ref{dys}) 
takes the form $\,\mathcal{R}f+\hh\lambda f=\psi$.
\end{proof}

\section{Proof of Perelman's gradient-type theorem (Thm.~\ref{kzs})}\label{dt}

\vskip4pt
\subsection*{Assumption.} $(M,g)\,$ is a compact Riemannian manifold and 
(\ref{sol}) holds for a fixed real number $\,\lambda\,$ and a fixed vector 
field $\,w$.

\vskip4pt
\subsection*{Objective.} To find a function $\,f\,$ satisfying equation 
(\ref{ndf}):  
$\,\nabla\df+\hh\mathrm{Ric}\hh=\hskip.7pt\lambda\hskip.7pt g$. We may assume 
here that $\,n=\dim M\ge3\,$ and the sol\-i\-ton constant $\,\lambda\,$ is 
positive, since -- as shown by Hamilton \cite{hamilton-tr} and, back in 1974, 
Bourguignon \cite{bourguignon} -- in the compact case condition (\ref{sol}) 
with $\,n=2$, or with $\,\lambda\le0$, always gives (\ref{ein}), that is, 
(\ref{ndf}) for $\,f=0$.

\vskip4pt
\subsection*{How to proceed.} The function $\,f\,$ is supposed to satisfy 
(\ref{ndf}); where to get it from? Lemma~\ref{stale} suggests an answer -- 
three functions, naturally associated with $\,f\,$ should, {\it ex post 
facto}, turn out to be constant, which (with the right choice of their 
constant values) leads to three sec\-ond-or\-der elliptic equations, imposed 
on the required function $\,f$. One can thus try to show first that one of the 
three equations has a solution $\,f$, and then -- that such $\,f\,$ also 
satisfies (\ref{ndf}). In the first equation, the constant in question must 
obviously be the mean value $\,\mathrm{s}_{\hs\mathrm{avg}}$ of the scalar 
curvature $\,\mathrm{s}\,$ (since $\,\int_M\Delta f\,\mathrm{d}g=0$), and 
$\,f\,$ such that 
$\,\Delta\ef+\hs\mathrm{s}\hs=\hs\mathrm{s}_{\hs\mathrm{avg}}$ exists due to 
the general solvability criterion for linear elliptic equations. However, 
nobody knows how to get from here to (\ref{ndf}). Things look better for the 
third equation, in which, since $\,\lc>0$, by adding to $\,f\,$ a suitable 
constant we may require that the constant on the right-hand side be zero. As 
we will see, solvability follows here from Rothaus's theorem.

\vskip4pt
\subsection*{Argument.} For any compact Riemannian manifold $\,(M,g)$, any 
function $\,f$, constant $\,\lambda\,$ and vector field $\,w$, let us set
\[
h=\nabla\df+\hh\mathrm{Ric}\hh-\hskip.7pt\lambda\hskip.7pt g\hh,\hskip9pt
\bz=\pounds\nnh_w\phantom{^i}g
+\hh\mathrm{Ric}\hh-\hskip.7pt\lambda\hskip.7pt g\hh,
\hskip9pt
\psi=\Delta e^{-f}+2\hh\dv\hh[e^{-f}\hskip-.7pt w]\hskip.7pt,
\]
where $\,\dv\,$ is the divergence operator acting on vector fields. Then (with 
no further assumptions!), for the operator $\,\mathcal{R}\,$ given by 
(\ref{rfe}),
\[
\displaystyle{\int_M|h|^2\hskip.7pt e^{-f}\mathrm{d}g}\hskip.7pt\,\,
+\hskip2pt\displaystyle{\int_M(\mathcal{R}f+\hh \lambda f
+\hskip.7pt\mathrm{s}/2)\hskip.7pt\psi\,\mathrm{d}g}\,\,
=\hskip2pt\displaystyle{\int_M\langle h,\bz\rangle\hskip.7pt 
e^{-f}\mathrm{d}g}\hskip.7pt.
\]
(Proof -- a trivial, though tedious, integration by parts.) In our situation, 
with $\,\lambda\,$ and $\,w\,$ satisfying (\ref{sol}), the integral on the 
right-hand side vanishes, since $\,\bz=0$, and Rothaus's theorem 
(Thm.~\ref{rot}) allows us to choose $\,f$ such that 
$\,\mathcal{R}f+\hh\lambda f+\hs\mathrm{s}/2\hs=\hs0$. Thus, $\,h=0$, which 
completes the proof.

\section{Canonical K\"ah\-ler metrics}\label{km}
Let $\,g\,$ be a K\"ah\-ler metric on a fixed compact complex manifold $\,M$. 
The formulae $\,\omega=g(J\,\cdot\,,\,\cdot\,)\,$ and 
$\,\rho=\mathrm{Ric}\hh(J\,\cdot\,,\,\cdot\,)\,$ then define the {\it 
K\"ah\-ler form\/} and {\it Ric\-ci form\/} of the metric $\,g$. Both of them 
are closed $\,2$-forms, that is, skew-sym\-met\-ric $\,2$-ten\-sor fields (due 
to (\ref{grh})) with $\,d\hs\omega=d\hh\rho=0$. If a function $\,f\,$ on 
$\,M\,$ satisfies the grad\-i\-ent-sol\-i\-ton equation (\ref{ndf}), then 
$\,i\hskip1pt\partial\overline{\partial}\hskip-.9ptf+\hs\rho\hs\,
=\,\lc\hs\omega$, since the form  
$\,i\hskip1pt\partial\overline{\partial}\hskip-.9ptf\,$ is exact (being the 
exterior derivative of $\,i\hskip1pt\overline{\partial}\hskip-.9ptf\hh$). This 
implies equality of de Rham co\-ho\-mo\-lo\-gy classes: 
$\,[\hs\rho\hh]=\lc\hs[\omega\hh]\in H^2(M,\bbR)$. On the other hand, 
$\,[\hs\rho\hh]\,$ is always equal to the first Chern class 
$\,c_1^{\phantom j}$ of $\,M\,$ multiplied by $\,2\pi$, so that it only 
depends on the complex structure -- and not, for instance, on $\,g$. 

For a compact complex manifold to admit a K\"ah\-ler-Ric\-ci sol\-i\-ton with 
a positive (or negative) sol\-i\-ton constant $\,\lambda\,$ it is thus 
necessary that its first Chern class $\,c_1^{\phantom j}$ be positive (or 
negative), in the sense of realizability of $\,c_1^{\phantom j}$ (or 
$\,-\hh c_1^{\phantom j}$) as the co\-ho\-mo\-lo\-gy class of the K\"ah\-ler 
form of some K\"ah\-ler metric.

In complex dimension $\,2\,$ this necessary condition is also sufficient.  
In addition, on compact complex surfaces with positive or negative first Chern 
class $\,c_1^{\phantom j}$, K\"ah\-ler-Ric\-ci sol\-i\-tons constitute a 
natural choice of a {\it distinguished}, or {\it canonical}, K\"ah\-ler 
metric. 

More precisely, {\it such a surface always admits a K\"ah\-ler-Ric\-ci 
sol\-i\-ton, which is in addition unique up to the action of the identity 
component of its complex automorphism group, and rescalings.}

The above statement summarizes a series of results, both classical and more 
recent. Here belong the theorems establishing:
  \begin{equivalence}
\item uniqueness of K\"ah\-ler-Ein\-stein metrics (1957: Eugenio Calabi 
\cite{calabi}, for $\,c_1^{\phantom j}<0$, 1987: Shigetoshi Bando, Toshiki 
Mabuchi \cite{bando-mabuchi}, for $\,c_1^{\phantom j}>0$),
\item truth of Calabi's conjecture about the existence of a 
K\"ah\-ler-Ein\-stein metric when $\,c_1^{\phantom j}<0\,$ (1978: Thierry 
Aubin \cite{aubin}, Shing-Tung Yau \cite{yau}),
\item existence K\"ah\-ler-Ric\-ci sol\-i\-tons on compact toric complex 
manifolds with $\,c_1^{\phantom j}>0\,$ (2004: Xu-Jia Wang, Xiaohua Zhu 
\cite{wang-zhu}),
\item uniqueness of a K\"ah\-ler-Ric\-ci sol\-i\-ton when 
$\,c_1^{\phantom j}>0$, modulo automorphisms from the identity component 
(2002: Tian and Zhu \cite{tian-zhu}).
  \end{equivalence}

\section{Ric\-ci-Hes\-i\-an equations}\label{rh}
From now on all functions are -- by definition -- {\it smooth}.

Using Maschler's terminology \cite{maschler}, we say that a function $\,\kp\,$ 
on a Riemannian manifold $\,(M,g)\,$ {\it satisfies a Ric\-ci-Hess\-i\-an 
equation\/} if, for some function $\,\alpha\,$ on $\,M\nh$, nonzero at all 
points of a dense subset, one has, with the notation introduced in (\ref{trz}),
\begin{equation}\label{rhe}
\{\alpha\nabla d\kp+\hs\mathrm{Ric}\}_0^{\phantom j}\hh=\,0
\end{equation}
or, equivalently: 
$\,\alpha\nabla d\kp+\hh\mathrm{Ric}\hh=\hh\eta\hs g\,$ for some function 
$\,\eta$.

Solutions of equations of type (\ref{rhe}) exist, for instance, on certain 
K\"ah\-ler manifolds, forming the family of {\it special K\"ah\-ler-Ric\-ci 
potentials}, which is completely classified, both locally 
\cite[\S18]{derdzinski-maschler-lc} and in the compact case 
\cite[\S16]{derdzinski-maschler-sk}. Their definition and a discussion of how 
they are related to condition (\ref{rhe}) can be found in 
\cite[\S7]{derdzinski-maschler-lc} and \cite{maschler}.

Another special case of a Ric\-ci-Hess\-i\-an equation (\ref{rhe}) is the 
grad\-i\-ent-sol\-i\-ton equation (\ref{ndf}), in which $\,\kp=f\,$ and 
$\,\alpha=1$. A further connection between equations (\ref{rhe}) and 
(\ref{ndf}) is due to the fact that many solutions of Ric\-ci-Hess\-i\-an 
equations (\ref{rhe}), which themselves do not satisfy (\ref{ndf}), may be 
used to construct solutions of (\ref{ndf}) by suitably modifying the metric 
$\,g\,$ and the functions appearing in (\ref{rhe}). The modifications of 
metrics consist here in their {\it con\-for\-mal changes}, a detailed 
discussion of which will be given later (\S\ref{zk}).

The approach just described was developed by Maschler in \cite{maschler}. 
His initial solutions $\,\kp\,$ of equations of type (\ref{rhe}) belonged to 
the class -- mentioned earlier in this section -- of special 
K\"ah\-ler-Ric\-ci potentials.

The following lemma, using the convention borrowed from the end of \S\ref{pr}, 
is a slight variation on Maschler's argument \cite[Remark 4.2]{maschler}. 
The solution $\,\kp\,$ of a Ric\-ci-Hess\-i\-an equation (\ref{rhe}), arising 
here, is a special K\"ah\-ler-Ric\-ci potential on the compact K\"ah\-ler 
manifold $\,(M^m_k\nnh,g)$, that is, belongs to a family of examples 
constructed in \cite[\S5]{derdzinski-maschler-sk}.
\begin{lemma}\label{rhxpl}Let\/ $\hs m,k\in\bbZ\hh$ and smooth functions\/ 
$\hs\jx,\fy\nh:\nh[\hs0,\tc\hh]\nh\to\nh\bbR\,$ of the variable\/ $\,\jt$, 
with\/ $\,\tc\in(0,\infty)$, satisfy the conditions\/ $\,m\hn>k\hn>0\,$ and
  \begin{equivalence}
\item $\dd=(m-1)\hs\dot\jx\dt+m\hh\fy-m$,
\item $\fy(0)=\fy(\tc)=0$, while\/ $\,\fy>0\,$ on the interval\/ $\,(0,\tc)$,
\item $\dt(0)=k\,$ and\/ $\,\dt(\tc)=-\hh k$,
  \end{equivalence}
where\/ $\,(\,\,)\dot{\,}=\,d/d\jt$. Then the complex manifold\/ 
$\,M=M^m_k$ described by\/ $(*)$ in\/ \S\ref{sk} admits a K\"ah\-ler metric\/ 
$\,g\,$ with scalar curvature\/ $\,\mathrm{s}\,$ and a smooth surjective 
function\/ $\,\jt:M\to[\hs0,\tc\hh]\,$ with the following properties\/{\rm:}
  \begin{alternative}
\item the function\/ $\,\kp=e^\jt$ is a solution of the Ric\-ci-Hess\-i\-an 
equation\/ {\rm(\ref{rhe})}, that is, 
$\,\{\alpha\nabla d\kp+\hs\mathrm{Ric}\}_0^{\phantom j}\nh=0$, where\/ 
$\,\alpha=(m-1)(\dot\jx+1)\hh e^{-\jt}\nnh$,
\item $\Delta\kp=2(\dt+m\hh\fy)\,$ and\/ 
$\,g(\nabla\kp,\nh\nabla\kp)=2e^\jt\fy$, for the function\/ $\,\kp=e^\jt\nnh$,
\item $\,e^\jt\mathrm{s}/2\,\,=\,\,m(m-1)\,-\,m(m-1)\fy\,-\,(2m-1)\dt\,-\,\dd$.
   \end{alternative}
\end{lemma}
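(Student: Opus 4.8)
The plan is to realize $\,g\,$ as one of the cohomogeneity-one K\"ah\-ler metrics furnished by the Ca\-la\-bi momentum construction on the $\,\bbCP^1$-bundle $\,M^m_k$, and then to read off (i)--(iii) from the standard curvature formulae for such metrics; hypothesis (a) will enter only at the last step, to promote a Ric\-ci-Hess\-i\-an relation to an actual proportionality with $\,g$. Throughout, $\,\jt:M\to[\hs0,\tc\hh]\,$ plays the role of the ``momentum-type'' variable, and $\,\kp=e^\jt$.

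First I would introduce the enlarged family of complex manifolds promised in \S\ref{sk}: one replaces $\,\bbCP^{m-1}$, carrying a suitable constant multiple of the Fu\-bi\-ni-Stu\-dy metric, by an arbitrary compact K\"ah\-ler-Ein\-stein manifold $\,(N,g_N)\,$ of complex dimension $\,m-1\,$ and a holomorphic line bundle over $\,N\,$ whose first Chern class is a fixed rational multiple -- governed by $\,k\,$ -- of that of $\,N$; then $\,M\,$ is the total space of the $\,\bbCP^1$-bundle obtained by projectively compactifying that line bundle, and for $\,N=\bbCP^{m-1}\,$ with the $\,k$th tensor power of the tautological line bundle this is $\,M^m_k$ as in $(*)$. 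Deleting the zero section $\,Z_0$ and the infinity section $\,Z_\infty$ leaves an open dense $\,M_0\subset M$, the total space of a $\,\bbC^*$-bundle over $\,N$; write $\,\pi:M_0\to N\,$ for its projection and $\,\theta\,$ for the pullback to $\,M_0$ of a connection $\,1$-form on the associated principal $\,S^1$-bundle, normalized so that its curvature is a prescribed multiple of the pullback along $\,\pi\,$ of the K\"ah\-ler form of $\,g_N$. On $\,M_0$ I would take $\,g\,$ in the Ca\-la\-bi form
\[
g\,=\,\frac{d\kp\otimes d\kp}{2\hh\kp\hh\fy(\jt)}\,+\,2\hh\kp\hh\fy(\jt)\,\theta\otimes\theta\,+\,\jv(\jt)\,\pi^*g_N\hs,
\]
with $\,\jv\,$ the positive base-warping -- an affine function of $\,\kp\,$ forced by the K\"ah\-ler condition, its derivative fixed by the curvature normalization of $\,\theta$. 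The auxiliary function $\,\jx\,$ of the statement is the reparametrization of $\,\jv\,$ for which $\,(m-1)(\dot\jx+1)\hh e^{-\jt}$ equals the coefficient $\,\alpha\,$ that makes the $\,d\kp\otimes d\kp$-component of $\,\alpha\nabla d\kp+\mathrm{Ric}\,$ vanish.

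The main obstacle is to show that this locally defined metric extends to a \emph{smooth, positive-definite} K\"ah\-ler metric on all of the compact manifold $\,M$, so that the completed complex manifold really is $\,M^m_k$. On $\,M_0$ the displayed $\,g\,$ is K\"ah\-ler by the Ca\-la\-bi ansatz and positive definite because $\,\fy>0\,$ and $\,\jv>0\,$ there; near $\,Z_0$ and $\,Z_\infty$ the circle fibres of $\,M_0$ must collapse to points, and the degeneracy $\,\fy(0)=\fy(\tc)=0\,$ of hypothesis (b) is precisely what makes the $\,\theta\otimes\theta$-term disappear there, while the slopes $\,\dt(0)=k\,$ and $\,\dt(\tc)=-\hh k\,$ of hypothesis (c) are the ``cone-angle'' conditions ensuring that, in the holomorphic trivializations of the bundle near the two sections, $\,g\,$ rounds off smoothly and the added fibres assemble into the $\,\bbCP^1$-bundle which is the $\,k$th tensor power appearing in $(*)$. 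I would verify this by the extension criterion of \cite[\S5]{derdzinski-maschler-sk}, or by a direct computation in the standard charts; the outcome is that $\,\kp=e^\jt\,$ becomes a special K\"ah\-ler-Ric\-ci potential on $\,(M,g)$, of the kind constructed in \cite[\S5]{derdzinski-maschler-sk}.

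It then remains to run the curvature bookkeeping. From the displayed form of $\,g$, the en\-do\-mor\-phism associated with $\,\nabla d\kp\,$ via (\ref{gbv}) is block diagonal in the $\,J$-invariant splitting $\,\mathrm{span}\{\nabla\kp,J\nabla\kp\}\oplus\ker\pi_*$, with entries explicit functions of $\,\jt$; taking $\,\mathrm{tr}_g\,$ gives $\,\Delta\kp=2(\dt+m\hh\fy)$, and $\,g(\nabla\kp,\nabla\kp)=2\hh\kp\hh\fy(\jt)=2e^\jt\fy\,$ directly -- this is (ii). The Ric\-ci tensor has the same block shape, computable from the Ein\-stein property of $\,g_N\,$ together with the standard submersion and warping correction terms, and contracting it yields the scalar-curvature identity (iii). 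Finally, for (i): the $\,\ker\pi_*$-block of $\,\alpha\nabla d\kp+\mathrm{Ric}\,$ is automatically a multiple of $\,\pi^*g_N$, hence of $\,g\,$ restricted to that sub\-bundle, by Schur's lemma applied to the Ein\-stein base, and on $\,\mathrm{span}\{\nabla\kp,J\nabla\kp\}\,$ the $\,d\kp\otimes d\kp$-part cancels by the choice of $\,\alpha$; the sole remaining requirement -- that the two surviving diagonal multiples of $\,g\,$ agree -- is a single sec\-ond-or\-der ordinary differential equation in $\,\jt$, and that equation is precisely hypothesis (a). Hence $\,\{\alpha\nabla d\kp+\mathrm{Ric}\}_0=0$, completing the proof.
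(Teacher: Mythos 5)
Your proposal is correct and follows essentially the same route as the paper's own outline: both build $\,g\,$ as a cohomogeneity-one (Calabi-ansatz) K\"ah\-ler metric on the projective compactification of a line bundle over a K\"ah\-ler-Ein\-stein base, use the boundary conditions (b)--(c) to extend it smoothly across the zero and infinity sections, and reduce (i)--(iii) to the standard block-diagonal curvature formulae, with hypothesis (a) supplying the one remaining ordinary differential identity. The only difference is cosmetic -- you write the metric in momentum coordinates $\,d\kp\otimes d\kp/(2\kp\fy)+2\kp\fy\,\theta\otimes\theta+2\kp\,\pi^*g_N^{\phantom j}$, whereas the paper parametrizes the same metric by the fibre norm $\,r\,$ via the substitution $\,d\jt/dr=2\fy/(kr)$ -- and both arguments leave the final curvature verification as a routine computation.
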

\begin{proof}[Outline of proof]The metric $\,g\,$ and function $\,\jt\,$ will 
be shown to exist via an explicit construction, carried out in a slightly 
larger class of complex manifolds than just the family $\,M^m_k$ (see also the 
comment at the end of \S\ref{sk}). To be specific, suppose that $\,N\hs$ is a 
compact complex manifold of dimension $\,m-1\,$ and its canonical bundle (the 
top exterior power of cotangent bundle) can be raised to the fractional 
tensor power with the exponent $\,k/m$. When $\,m\,$ and $\,k\,$ are 
relatively prime, this amounts to realizability of the canonical bundle as 
the $\,m\hh$th tensor power of some line bundle; it is known 
\cite{kobayashi-ochiai} that $\,N\hs$ must then be bi\-hol\-o\-mor\-phic to 
the projective space $\,\bbCP^{m-1}\nnh$, for which the tautological bundle 
is an $\,m\hh$th tensor root of the canonical bundle. If, however, the 
fraction $\,k/m\,$ can be simplified, there are more such examples, as 
illustrated by the case of odd Cartesian powers of $\,\bbCP^1\nnh$.

Let us also assume that $\,N\hs$ carries a K\"ah\-ler-Ein\-stein metric 
$\,h\,$ with the Ric\-ci tensor $\,2m\hs h$. In other words, the Ein\-stein 
constant is required to be positive, and its value becomes $\,2m\,$ after 
$\,h\,$ is suitable rescaled.

We denote by $\,M\,$ the projective compactification of the line bundle 
$\,\ey\hs$ over $\,N\,$ arising as the $\,(k/m)\hh$th tensor power of the 
canonical bundle. Our assumptions guarantee the existence in $\,\ey\hs$ of a 
Her\-mit\-i\-an Chern connection with the curvature form equal to the Ric\-ci 
form of $\,h\,$ (see \S\ref{km}) multiplied by $\,-k/m$. The tangent bundle of 
the total space $\,\ey\hs$ is thus decomposed into the direct sum of the 
vertical sub\-bundle $\,\mathcal{V}\nh$, tangent to the fibres, and the 
horizontal distribution $\,\mathcal{H}\,$ of the connection.

The fibre norm in $\,\ey\hs$ is a nonnegative function $\,r\,$ on the total 
space $\,\ey\nh$, which we simultaneously treat as an independent variable. 
Our variable $\,\jt$, restricted to the interval $\,(0,\tc)$, may now be 
turned into a function of the variable $\,r$, characterized by a choice of 
one of the dif\-feo\-mor\-phisms $\,r\mapsto\jt(r)\,$ between the intervals 
$\,(0,\infty)\,$ and $\,(0,\tc)\,$ satisfying the equation
\begin{equation}\label{ddr}
\frac d{dr}\,\jt(r)\,\,=\frac{2\hs\fy(\jt(r))}{kr}\,.
\end{equation}
This allows us to identify $\,\jt\,$ and $\,\fy\,$ with functions 
$\,\ey\smallsetminus N\to\bbR$, depending only on the fibre norm $\,r$, and 
defined on the complement of the zero section $\,N\subset\ey\nh$. Our 
metric $\,g\,$ on $\,\ey\smallsetminus N\hs$ is now defined by requiring that 
$\,\mathcal{V}\,$ be $\,g$-or\-thog\-onal to $\,\mathcal{H}$, that $\,g\,$ 
restricted to each fibre of $\,\ey\hs$ be the Euclidean metric multiplied by 
the  function $\,2(kr)^{-2}e^\jt\fy$, and that $\,g$ restricted to 
$\,\mathcal{H}\,$ be the product of the function $\,2\hh e^\jt$ and the 
pull\-back of the base metric $\,h\,$ under the bundle projection 
$\,\ey\to N\nh$.

A solution $\,\jt(r)\,$ of  (\ref{ddr}) is obviously nonunique; other 
solutions arise from it by rescaling the independent variable $\,r$. Any 
resulting new metric, is, however, isometric to $\,g$, an isometry being 
provided by a suitable rescaling in every fibre of the bundle $\,\ey$.

The rest of the proof -- verifying that $\,g\,$ and $\,\jt\,$ have smooth 
extensions to $\,M\,$ and satisfy the required conditions -- is trivial 
(though tedious).
\end{proof}

\section{Another description of gradient Ric\-ci sol\-i\-tons}\label{io}
Let us restrict our consideration to gradient Ric\-ci sol\-i\-tons with 
{\it positive\/} sol\-i\-ton constants $\,\lc$. A sol\-i\-ton function 
$\,f\,$ satisfying (\ref{ndf}) can thus be {\it normalized\/} by adding a 
suitable constant so as to make the function 
$\,\Delta\ef-g(\nabla\!\ef,\nabla\!\ef)+2\lc\ef\,$ (constant in view 
of Lemma~\ref{stale}) equal to zero. We also normalize the metric $\,g$, 
replacing it with $\,\lambda\hs g\,$ (and in effect assuming that $\,\lc=1$), 
which leaves the Ric\-ci tensor $\,\mathrm{Ric}\,$ and the Hess\-i\-an 
$\,\nabla\df\,$ unchanged. These normalizations result in the equalities
\begin{equation}\label{ndt}
\mathrm{i)}\hskip8pt\nabla\df\,+\,\hh\mathrm{Ric}\hh\,=\,\hs g\hs,\hskip24pt
\mathrm{ii)}\hskip8pt\Delta\ef\,-\,g(\nabla\!\ef,\nabla\!\ef)\,+\,2\ef\,
=\,0\hs.
\end{equation}
For a function $\,\ef\,$ on a Riemannian manifold $\,(M,g)\,$ of dimension 
$\,n\ge3$, the normalized version (\ref{ndt}) of the grad\-i\-ent-sol\-i\-ton 
equation (\ref{ndf}) is equivalent to the following three-equation system:
\[
\begin{array}{rl}
\mathrm{(a)}\hskip4pt&\{\nabla\df+\hh\mathrm{Ric}\}_0^{\phantom j}=0\hs,\\
\mathrm{(b)}\hskip4pt&\Delta\ef-g(\nabla\!\ef,\nabla\!\ef)+2\ef=0\hs,\\
\mathrm{(c)}\hskip4pt&(n-2)(\Delta\ef+\hh\mathrm{s}\hh-n)\hh
+\hh n\hh[\Delta\ef-g(\nabla\!\ef,\nabla\!\ef)+2\ef\hh]\hs=\hs0\hs.
\end{array}
\]
The symbol $\,\{\hskip3.5pt\}_0^{\phantom j}$ denotes here the 
$\,g$-trace\-less part, given by (\ref{trz}), while 
$\,\mathrm{s}=\mathrm{tr}_g\mathrm{Ric}\,$ is the scalar curvature of $\,g$.

The equivalence between (\ref{ndt}) and the system (a) -- (c) is obvious from 
the fact that equality of the two sides in (\ref{ndt}.i) amounts to 
simultaneous equalities of their $\,g$-trace\-less parts, and their 
$\,g$-traces.

\section{Con\-for\-mal changes of Riemannian metrics}\label{zk}
By a {\it con\-for\-mal change\/} of a Riemannian metric $\,g\,$ on a manifold 
$\,M$ one means its replacement by the product $\,\mu g$, where 
$\,\mu:M\to(0,\infty)$.

For a fixed $\,n$-di\-men\-sion\-al Riemannian manifold $\,(M,g)$, let 
$\,\mathrm{Ric}\,$ and $\,\mathrm{s}\,$ denote the Ric\-ci tensor and scalar 
curvature of $\,g$. Their counterparts $\,\br,\bs\,$ for the con\-for\-mal\-ly 
related metric $\,\bg=g/\cf^2\nnh$, with any function $\,\cf:M\to(0,\infty)$, 
are easily verified to be given by
\[
\begin{array}{l}
\br\,=\,\mathrm{Ric}\,+\,(n-2)\hs\cf^{-1}\nabla d\cf\,+\hs
\left[\cf^{-1}\Delta\cf\,-\,(n-1)\cf^{-2}g(\nabla\hskip-1.5pt\cf,\nabla\hskip-1.5pt\cf)\right]\nh g\hs,\hskip-20pt\\
\bs\,\,=\,\,\cf^2\mathrm{s}\hh\,+\,2(n-1)\,\cf\Delta\cf\,
-\,n(n-1)\hh g(\nabla\hskip-1.5pt\cf,\nabla\hskip-1.5pt\cf)\hs.
\end{array}
\]
See, for instance, \cite[pp.\ 528--529]{dillen-verstraelen}.

Suppose now that both $\,\cf\,$ and $\,\ef\,$ are smooth functions of a given 
nonconstant function $\,\jt:M\to\bbR\,$ (cf.\ \S\ref{pr}), and consider 
the functions $\,\jx,\jy\,$ of the variable $\,\jt\,$ defined by 
$\,\jx=2\log\cf-\jt+(m-1)^{-1}\nh\ef\,$ and $\,\jy=(m-1)^{-1}\nh\ef\nh$, where 
$\,m=n/2\,$ (and $\,n\,$ need not be even). Let us further assume that 
$\,g(\nabla\jt,\nh\nabla\jt)\,$ is a smooth function of the function $\,\jt$, 
and set $\,\fy=e^\jt g(\nabla\jt,\nh\nabla\jt)/2$. Thus,
\begin{equation}\label{sgf}
\mathrm{i)}\hskip6pt\cf=e^{(\jx-\jy+\jt)/2}\nh,\hskip11pt\mathrm{ii)}
\hskip6pt\ef=(m-1)\hh\jy\nh,\hskip11pt\mathrm{iii)}
\hskip6ptg(\nabla\jt,\nh\nabla\jt)=2e^{-\jt}\fy\hs.
\end{equation}
Our goal is to find sufficient conditions for a con\-for\-mal change of a 
metric $\,g\,$ (which -- so far -- is completely arbitrary) to yield a 
gradient Ric\-ci sol\-i\-ton $\,\bg\,$ with a sol\-i\-ton constant 
$\,\lc>0\,$ (that is, $\,\lc=1$, after the normalization described in 
\S\ref{io}). These conditions will be imposed on the unknown functions 
$\,\jx,\jy\,$ of the variable $\,\jt$, corresponding -- via (\ref{sgf}) -- to 
the unknown functions $\,\cf\,$ and $\,\ef$, which in turn will constitute 
the functional factor in the con\-for\-mal change $\,\bg=g/\cf^2\nh$, and the 
sol\-i\-to\-n function for $\,\bg$. 

We first evaluate the ingredients of (a) -- (c) in \S\ref{io} for $\,\ef\,$ 
and the metric $\,\bg\,$ (instead of $\,g$), expressing them in terms of: the 
functions $\,\jx,\jy,\fy$ (and their derivatives with respect to $\,\jt$, 
with the notation $\,(\,\,)\dot{\,}=\,d/d\jt$); the $\,g$-Hess\-i\-an 
$\,\nabla d\kp\,$ of the function $\,\kp=e^\jt$; its 
$\,g$-La\-pla\-cian $\,\Delta\kp$; the Ric\-ci tensor $\,\mathrm{Ric}\,$ of 
$\,g\hh$; and its scalar curvature $\,\mathrm{s}$, so that
\begin{equation}\label{nfz}
\{\bn\df\nh+\hh\br\hh\}_0^{\phantom j}
=\{\alpha\nabla d\kp+\hs\mathrm{Ric}\hs
+\beta\,d\jt\otimes d\jt\}_0^{\phantom j}
\end{equation}
where $\,\alpha=(m-1)(\dot\jx+1)\hh e^{-\jt}$ and 
$\,\beta=(m-1)(2\hh\ddot\jx-\dot\jy^2\nh+\dot\jx^2\nh-1)/2$,
\begin{equation}\label{med}
\begin{array}{l}
(m-1)^{-1}e^{\jy-\jx}[\hh\bd\ef\nh-\bg(\bn\!\ef\nh,\bn\!\ef)+2\ef\hh]\\
\hskip20pt=\dot\jy\hh[\hh\Delta\kp\nh-\nh2(\dt+m\hh\fy)]
+2\hh[\fy\hh\ddot\jy\nh-\nh(m-1)\hs\fy\dot\jx\dot\jy\nh+\nh\dt\dot\jy\nh
+\nh\jy\hs e^{\jy-\jx}]\hh,
\end{array}
\end{equation}
and, still with $\,n=2m$,
\begin{equation}\label{eyd}
\begin{array}{l}
e^{\jy-\jx}[(\bd\ef+\hh\bs\hh-2m)\hs
+\hs m(m-1)^{-1}\{\hh\bd\ef\nh-\bg(\bn\!\ef\nh,\bn\!\ef)+2\ef\hh\}]\\
\hskip17pt=\,m\hs[2\hh\dot\jx\dt\hs-\hs(2m-1)\hs\fy\dot\jx^2+\hs\fy\dot\jy^2
+\hs2(\jy-1)\hs e^{\jy-\jx}-\hs\fy\hs+2m]\\
\hskip17pt-\,\,2\hs[m(m-1)\,-\,m(m-1)\fy\,-\,(2m-1)\dt\,-\,\dd\,
-\,e^\jt\mathrm{s}/2]\\
\hskip17pt+\,\,(2m-1)(\dot\jx+1)\hs[\hh\Delta\kp\nh-\nh2(\dt+m\hh\fy)]\\
\hskip17pt-\,\,2\hs[\dd\,-\,(m-1)\hs\dot\jx\dt\,-\,m\hh\fy\,+\,m]\\
\hskip17pt+\,\,(2m-1)\hs[2\hh\ddot\jx\,-\,\dot\jy^2\hs
+\,\dot\jx^2\hs-\,1]\hs\fy\hs.
\end{array}
\end{equation}
Verifying (\ref{nfz}) -- (\ref{eyd}) is, as before, easy, though tedious, and 
may be simplified by using the following intemediate steps. First, due to 
(\ref{sgf}.i),
\begin{equation}\label{tds}
2\hh\dot\cf\cf^{-1}\nh=\hs\dot\jx-\dot\jy+1\hs,\hskip9pt
2(\ddot\cf-\dot\cf)\hh\cf^{-1}\nh=\hs \ddot\jx-\ddot\jy
+[(\dot\jx-\dot\jy)^2\nh-1]/2\hh.
\end{equation}
From the above expression for $\,\br\,$ and $\,\bn\df$, 
(\ref{gns}.ii) -- (\ref{ndh}) with $\,\jh=\cf$ or $\,\jh=\ef$,  
(\ref{sgf}.ii), and (\ref{tds}) with $\,m=n/2\,$ and $\,\kp=e^\jt\nnh$, we 
obtain
\begin{equation}\label{rmr}
\begin{array}{l}
(m-1)^{-1}\{\br-\mathrm{Ric}\}_0^{\phantom j}\\
\hskip48pt=\{(\dot\jx-\dot\jy+1)\hh e^{-\jt}\nabla d\kp
+2(\ddot\cf-\dot\cf)\hh\cf^{-1}\hs d\jt\otimes d\jt\}_0^{\phantom j}\hs,\\
(m-1)^{-1}\bn\df=\dot\jy e^{-\jt}\nabla d\kp\\
\hskip48pt+\,\,(\ddot\jy+\dot\jx\dot\jy-\dot\jy^2)\,d\jt\otimes d\jt
-(\dot\jx-\dot\jy+1)\hh\dot\jy\hh e^{-\jt}\fy\hs g\hh.
\end{array}
\end{equation}
Now (\ref{tds}) and (\ref{rmr}) trivially imply (\ref{nfz}). Applying to the 
second equality in (\ref{rmr}) the operator 
$\,\mathrm{tr}_{\bg}=\cf^2\mathrm{tr}_g$ and, separately, setting 
$\,\jh=\ef\,$ in (\ref{gns}.i), then using the fact that 
$\,\mathrm{tr}_g(d\jt\otimes d\jt)=g(\nabla\jt,\nh\nabla\jt)$, and finally 
expressing $\,\cf\,$ and $\,g(\nabla\jt,\nh\nabla\jt)$ through (\ref{sgf}), 
we get
\begin{equation}\label{moe}
\begin{array}{l}
(m-1)^{-1}e^{\jy-\jx}\bd\ef\,=\,\dot\jy\Delta\kp\,
+\,2\hh[\ddot\jy-m\dot\jy+(m-1)(\dot\jy-\dot\jx)\dot\jy]\hh\fy\hs,\\
(m-1)^{-2}e^{\jy-\jx}\bg(\bn\!\ef\nh,\bn\!\ef)=2\hh\dot\jy^2\fy\hs,
\end{array}
\end{equation}
which gives (\ref{med}). From the formula for $\,\bs$, (\ref{gns}) -- 
(\ref{ndh}) with $\,\jh=\cf$, and (\ref{tds}), we have
\begin{equation}\label{eys}
\begin{array}{l}
(2m-1)^{-1}e^{\jy-\jx}\hs\bs\,=\,(2m-1)^{-1}e^\jt\mathrm{s}
+(\dot\jx-\dot\jy+1)\hh\Delta\kp\\
\hskip22pt+\,\,[2\hh\ddot\jx-2\hh\ddot\jy-(m-1)(\dot\jx-\dot\jy)^2\nh
-2m(\dot\jx-\dot\jy)-m-1]\hs\fy\hs.
\end{array}
\end{equation}
The last equality, (\ref{moe}) and (\ref{med}) easily yield (\ref{eyd}).

\section{Ric\-ci sol\-i\-tons con\-for\-mal to K\"ah\-ler metrics}\label{kk}
Having fixed $\,\tc\in(0,\infty)\,$ and integers $\,m,k\,$ such that 
$\,m>k>0$, let us consider the following system of sec\-ond-or\-der 
differential equations:
\begin{equation}\label{xdd}
\begin{array}{rrl}
\mathrm{i)}\hskip4pt&
2\hh\ddot\jx\,\,=&\dot\jy^2\hs\,-\,\,\dot\jx^2\hs\,+\,\,1\hs,\\
\mathrm{ii)}\hskip4pt&
\fy\hh\ddot\jy\,\,=&(m-1)\hs\fy\dot\jx\dot\jy\,\,-\,\,\dt\dot\jy\,\,
-\,\,\jy e^{\jy-\jx}\nh,\\
\mathrm{iii)}\hskip4pt&
\dd\,\,=&(m-1)\hs\dot\jx\dt\,\,+\,\,m\hh\fy\,\,-\,\,m\hs,
\end{array}
\end{equation}
imposed on unknown $\hs C^\infty\nnh$ functions 
$\,\jx,\jy,\fy:[\hs0,\tc\hh]\to\bbR\,$ of 
the variable $\,\jt\in[\hs0,\tc\hh]$, where $\,(\,\,)\dot{\,}=\,d/d\jt$, and 
the additional first-or\-der equation 
\begin{equation}\label{int}
2\hh\dot\jx\dt\,-\,(2m-1)\hs\fy\dot\jx^2\hs+\,\fy\dot\jy^2\hs
+\,2(\jy-1)\hs e^{\jy-\jx}\hs-\,\fy\,+2m\hs\,=\,\hs0
\end{equation}
along with the boundary conditions
\begin{equation}\label{bdr}
\fy(0)=\fy(\tc)=0\hh,\hskip7pt\fy>0\hskip5pt\mathrm{on}\hskip5pt(0,\tc)\hh,
\hskip7pt\dt(0)=k=-\hh\dt(\tc)\hh.
\end{equation}
Equation (\ref{int}) does not reduce too drastically the solution set of 
(\ref{xdd}), since the left-hand side of (\ref{int}) multiplied by $\,e^\jx$ 
is an integral of (\ref{xdd}).
\begin{theorem}\label{ckcrs}With any solution\/ 
$\,(\jx,\jy,\fy):[\hs0,\tc\hh]\to\rtr$ of the system\/ {\rm(\ref{xdd})} -- 
{\rm(\ref{bdr})}, for\/ $\,\tc,m,k\,$ fixed as above, one can associate a  
Ric\-ci sol\-i\-ton\/ $\,\bg\,$ con\-for\-mal to a K\"ah\-ler metric on the 
compact complex manifold\/ $\,M=M^m_k$ defined by\/ $(*)$ in\/ 
\S{\rm\ref{sk}}. To obtain\/ $\,\bg$, we use the K\"ah\-ler metric\/ $\,g\,$ 
on\/ $\,M\,$ and the function\/ $\,\jt:M\to\bbR$, arising from our\/ 
$\,\jx,\fy,\tc,m,k$ via Lemma\/~{\rm\ref{rhxpl}}, setting\/ $\,\bg=g/\cf^2$ 
for the function\/ $\,\cf\,$ in\/ {\rm(\ref{sgf}.i)}.

The metric\/ $\,\bg\,$ and the function\/ $\,\ef\,$ defined by\/ 
{\rm(\ref{sgf}.ii)} then satisfy conditions\/ {\rm(a)} -- {\rm(c)} of\/ 
\S{\rm\ref{io}}, equivalent to the grad\-i\-ent-sol\-i\-ton  equation\/ 
{\rm(\ref{ndf})} for\/ $\,\lambda=1\,$ and for\/ $\,\bg\,$ instead of\/ 
$\,g$.
\end{theorem}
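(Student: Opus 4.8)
The plan is to feed $\,(\jx,\jy,\fy)\,$ and $\,\tc,m,k\,$ into Lemma~\ref{rhxpl}, perform the con\-for\-mal change of \S\ref{zk}, and then verify conditions (a) -- (c) of \S\ref{io} for the resulting pair $\,(\bg,\ef)\,$ term by term, using the equations (\ref{xdd}) -- (\ref{int}) to annihilate the right-hand sides of the identities (\ref{nfz}), (\ref{med}) and (\ref{eyd}).

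First I would note that $\,m,k,\jx,\fy,\tc\,$ satisfy the hypotheses of Lemma~\ref{rhxpl}: its condition (i) is literally (\ref{xdd}.iii), and its remaining two conditions are part of (\ref{bdr}). Lemma~\ref{rhxpl} thus provides a K\"ah\-ler metric $\,g\,$ on $\,M=M^m_k$, with scalar curvature $\,\mathrm{s}$, and a smooth surjection $\,\jt:M\to[\hs0,\tc\hh]$, for which $\,\kp=e^\jt$ solves $\,\{\alpha\nabla d\kp+\hs\mathrm{Ric}\}_0=0\,$ with $\,\alpha=(m-1)(\dot\jx+1)\hh e^{-\jt}$, while also $\,\Delta\kp=2(\dt+m\fy)$, $\,g(\nabla\kp,\nabla\kp)=2e^\jt\fy$, and $\,e^\jt\mathrm{s}/2=m(m-1)-m(m-1)\fy-(2m-1)\dt-\dd$. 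In particular $\,g(\nabla\jt,\nabla\jt)=2e^{-\jt}\fy\,$ is a smooth function of $\,\jt$, so the set-up of \S\ref{zk} applies to this $\,g$; taking $\,\cf\,$ and $\,\ef\,$ as in (\ref{sgf}) -- which are smooth on $\,M$, with $\,\cf>0$, since $\,\jx,\jy\in C^\infty[\hs0,\tc\hh]\,$ and $\,\jt:M\to[\hs0,\tc\hh]$ -- I would set $\,\bg=g/\cf^2$, exactly as in the statement.

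Next I would read off conditions (a) -- (c) of \S\ref{io} for $\,(\bg,\ef)$. For (a), formula (\ref{nfz}) gives $\,\{\bn\df+\br\}_0=\{\alpha\nabla d\kp+\hs\mathrm{Ric}+\beta\,d\jt\otimes d\jt\}_0$, and $\,\beta=(m-1)(2\hh\ddot\jx-\dot\jy^2+\dot\jx^2-1)/2=0\,$ by (\ref{xdd}.i), so the right-hand side is $\,\{\alpha\nabla d\kp+\hs\mathrm{Ric}\}_0=0$. For (b), formula (\ref{med}) writes $\,(m-1)^{-1}e^{\jy-\jx}[\bd\ef-\bg(\bn\!\ef,\bn\!\ef)+2\ef]\,$ as $\,\dot\jy\hh[\Delta\kp-2(\dt+m\fy)]+2\hh[\fy\ddot\jy-(m-1)\fy\dot\jx\dot\jy+\dt\dot\jy+\jy e^{\jy-\jx}]$; the first bracket vanishes because $\,\Delta\kp=2(\dt+m\fy)$, the second by (\ref{xdd}.ii), so (b) holds. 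For (c), the right-hand side of (\ref{eyd}) is a sum of five lines that vanish in turn: the first is $\,m\,$ times the left side of (\ref{int}); the second is $\,-2\,$ times $\,m(m-1)-m(m-1)\fy-(2m-1)\dt-\dd-e^\jt\mathrm{s}/2$, zero by the scalar-curvature formula above; the third is $\,(2m-1)(\dot\jx+1)\,$ times $\,\Delta\kp-2(\dt+m\fy)=0$; the fourth is $\,-2\,$ times $\,\dd-(m-1)\dot\jx\dt-m\fy+m=0\,$ from (\ref{xdd}.iii); and the fifth is $\,(2m-1)\fy\,$ times $\,2\hh\ddot\jx-\dot\jy^2+\dot\jx^2-1=0\,$ from (\ref{xdd}.i). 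Since $\,e^{\jy-\jx}>0\,$ and $\,n=2m$, so that $\,m/(m-1)=n/(n-2)$, the vanishing of (\ref{eyd}) is exactly condition (c).

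Finally, $\,n=\dim M=2m\geq4$, so by the discussion in \S\ref{io} the system (a) -- (c) just established is equivalent to the normalized equation (\ref{ndt}), hence to (\ref{ndf}) with $\,\lc=1\,$ for $\,\bg$; and $\,\bg=g/\cf^2$ is con\-for\-mal to the K\"ah\-ler metric $\,g\,$ on $\,M^m_k$, so it is the required Ric\-ci sol\-i\-ton. I do not expect a genuine obstacle here: the hard analytical and computational work -- establishing the identities (\ref{nfz}) -- (\ref{eyd}) in \S\ref{zk} and constructing $\,(g,\jt)\,$ in Lemma~\ref{rhxpl} -- is already behind us, and what remains is careful bookkeeping, namely matching each term on the right-hand sides of (\ref{nfz}), (\ref{med}), (\ref{eyd}) with the equation from (\ref{xdd}) -- (\ref{int}), or the conclusion of Lemma~\ref{rhxpl}, that annihilates it, together with keeping track of the normalizations $\,n=2m\,$ and $\,\lc=1$.
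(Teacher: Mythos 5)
Your proposal is correct and follows essentially the same route as the paper's own proof: it feeds the data into Lemma~\ref{rhxpl}, forms $\bg=g/\cf^2$, and annihilates the right-hand sides of (\ref{nfz}), (\ref{med}) and (\ref{eyd}) line by line using exactly the same pairing of terms with (\ref{xdd}.i)--(\ref{xdd}.iii), (\ref{int}) and the three conclusions of Lemma~\ref{rhxpl}. The only difference is that you spell out the bookkeeping (and the verification of the lemma's hypotheses) in more detail than the paper does.
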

\begin{proof}[Proof]The right-hand sides in equations (\ref{nfz}) -- 
(\ref{eyd}) are all equal to zero -- in (\ref{nfz}) we have 
$\,\{\alpha\nabla d\kp+\hs\mathrm{Ric}\}_0^{\phantom j}\nh=0\,$ and 
$\,\beta=0$, by Lemma~\ref{rhxpl}(i) and (\ref{xdd}.i); in (\ref{med}), both 
terms on the right-hand side vanish as a consequence of Lemma~\ref{rhxpl}(ii) 
and (\ref{xdd}.ii); while in (\ref{eyd}) each of the final five lines is zero 
due to (\ref{int}), Lemma~\ref{rhxpl}(iii), Lemma~\ref{rhxpl}(ii), 
(\ref{xdd}.iii) and (\ref{xdd}.i).
\end{proof}
For all known examples of even-di\-men\-sion\-al compact Ric\-ci sol\-i\-tons 
(listed in Question~\ref{isthr}), the multiplicities of eigen\-val\-ues of the 
Ric\-ci tensor are even at every point. This follows from (\ref{grh}) and the 
fact that the Ric\-ci tensor of a Riemannian product is, in a natural sense, 
the direct sum of the Ric\-ci tensors of the factor manifolds.

Suppose that, for some solution $\,(\jx,\jy,\fy):[\hs0,\tc\hh]\to\rtr$ of 
the system (\ref{xdd}) -- (\ref{bdr}), and the function $\,\cf\,$ defined by 
(\ref{sgf}.i), one has $\,\ddot\cf\ne\dot\cf$ somewhere in the interval 
$\,[\hs0,\tc\hh]\,$ (whether such solutions exists, is not known; the issue is 
further complicated by the singularities of equation (\ref{xdd}.ii) at 
$\,\jt=0\,$ and $\,\jt=\tc$, due to (\ref{bdr})). For reasons named below, the 
compact Ric\-ci sol\-i\-ton $\,(M,\bg)\,$ arising then from 
Theorem~\ref{ckcrs} would be non-Ein\-stein, non-K\"ah\-ler (even locally), 
and not locally isometric to a Riemannian product with Ein\-stein or 
lo\-cal\-ly-K\"ah\-ler factor manifolds. The result would be an affirmative 
answer to Question~\ref{isthr}.

The reasons just mentioned are as follows. Since the real dimension of $\,M\,$ 
is even, at every point satisfying the condition 
$\,(\ddot\cf-\dot\cf)\,d\jt\ne0$ the Ric\-ci tensor $\,\br\,$ has an  
eigen\-val\-ue of odd multiplicity, for which the gradient $\,\bn\kp\,$ is 
an eigen\-vec\-tor. This is immediate from (\ref{rmr}) since, in view of 
Lemma~\ref{rhxpl}(i) and (\ref{grh}), $\,\nabla d\kp\,$ and $\,\mathrm{Ric}\,$ 
are Her\-mit\-i\-an and simultaneously di\-ag\-o\-nal\-izable at each point, 
with even-di\-men\-sion\-al eigen\-spaces, while from (\ref{toz}.c) and the 
second equality in Lemma~\ref{rhxpl}(ii) it follows that $\,\nabla\kp\,$ is 
a common  eigen\-vec\-tor of both $\,\nabla d\kp\,$ and $\,\mathrm{Ric}$.

Solutions of the system (\ref{xdd}) -- (\ref{int}) for which 
$\,\ddot\cf=\dot\cf\,$ are in turn completely understood. We will describe 
them in \S\S\ref{sy}--\ref{pk}.

Theorem~\ref{ckcrs} also has a {\it local version}, in which instead of 
the bound\-a\-ry conditions (\ref{bdr}) one only assumes positivity of 
$\,\fy\,$ on the open interval forming the domain of the solution 
$\,(\jx,\jy,\fy)\,$ to (\ref{xdd}) -- (\ref{int}) with $\,m\ge2$. The 
construction of Lemma~\ref{rhxpl} then gives a metric $\,g\,$ and function 
$\,\jt$ which, although defined just on some noncompact manifold, still 
satisfy the conclusions (i) -- (iii) in Lemma~\ref{rhxpl}.

Yet another generalization of Theorem~\ref{ckcrs} arises when one removes 
equation (\ref{xdd}.ii) from the system (\ref{xdd}) -- (\ref{bdr}), keeping 
all the remaining requirements. The right-hand side of (\ref{nfz}) will then 
still be equal to zero. This leads to a weaker version of the 
grad\-i\-ent-sol\-i\-ton equation (\ref{ndf}): 
$\,\bn\df\nh+\hh\br=\lambda\hs\bg\,$ for some function 
$\,\lambda\,$ that need not be constant. Metrics $\,\bg\,$ for which such 
functions $\,f\,$ and $\,\lambda\,$ exist were studied by several authors 
\cite[p.\ 369]{maschler}, \cite{barros-ribeiro}, 
\cite{pigola-rigoli-rimoldi-setti}; they are sometimes called (gradient) 
{\it Ric\-ci al\-most-sol\-i\-tons}.

\section{Symmetries of {\rm(\ref{xdd})} -- {\rm(\ref{int})} and the 
condition $\,\ddot\cf=\dot\cf$}\label{sy}
In the equations forming the system (\ref{xdd}) -- (\ref{int}), no term 
depends explicitly on the variable $\,\jt$, while the terms containing the 
first derivatives $\,\dot\jx,\dot\jy,\dt\,$ are homogeneous quadratic in them. 
The system will thus remain satisfied if in a solution, defined on any 
interval, one replaces the variable $\,\jt\,$ by $\,c\pm\jt$, with a constant 
$\,c$.

The set of solutions defined on the interval $\,[\hs0,\tc\hh]\,$ and 
satisfying the boundary conditions (\ref{bdr}) is therefore invariant under 
the substitution of $\,\tc\nnh-\hs\jt\,$ for the variable $\,\jt$.

We now discuss the solutions of (\ref{xdd}) -- (\ref{int}) such that 
$\,\ddot\cf=\dot\cf$.
\begin{lemma}\label{snses}Let a solution\/ $\,(\jx,\jy,\fy)\,$ of\/ 
{\rm(\ref{xdd})} -- {\rm(\ref{int})}, with\/ $\,m\ge2$, defined on an open 
interval of the variable\/ $\,\jt$, satisfy in addition the condition\/ 
$\,\ddot\cf=\dot\cf$, where the function\/ $\,\cf\,$ is given by\/ 
{\rm(\ref{sgf}.i)}. Then\/ $\,\cf=q_0^{\phantom j}+q_1^{\phantom j}e^\jt$ for 
some constants\/ $\,q_0^{\phantom j},\hs q_1^{\phantom j}$, at least one of 
which is positive. Furthermore,
\begin{equation}\label{eta}
\mathrm{i)}\hskip8pt\dot\jy\hs\dt\,=\,(m\dot\jx-\dot\jy)\hh\dot\jy\hs\fy\,
-\,\jy\hs e^{\jy-\jx}\hs,\hskip28pt\mathrm{ii)}\hskip8pt\ddot\jy\,
=\,(\dot\jy-\dot\jx)\hh\dot\jy\hs,
\end{equation}
and one of the following three cases must occur\/{\rm:}
\begin{alternative}
\item $\jy=0\,$ on the entire interval of the variable\/ $\,\jt$,
\item $q_1^{\phantom j}=0$, so that\/ $\,\cf=q_0^{\phantom j}$ is a positive 
constant,
\item $q_0^{\phantom j}=0<q_1^{\phantom j\,}\,$ and\/ 
$\,\,\cf=q_1^{\phantom j}e^\jt\nnh$.
\end{alternative}
In the solution set of the full system\/ {\rm(\ref{xdd})} -- {\rm(\ref{bdr})}, 
with\/ $\,\tc,m,k\,$ fixed as before, the variable substitution of\/ 
$\,\tc\nnh-\hs\jt\,$ for\/ $\,\jt\,$ preserves the condition\/ 
$\,\ddot\cf=\dot\cf$, leaves case\/ {\rm(i)} unchanged, and switches case\/ 
{\rm(ii)} with\/ {\rm(iii)}.
\end{lemma}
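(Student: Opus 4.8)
The plan is to treat Lemma~\ref{snses} as a chain of elementary ordinary-dif\-fe\-ren\-tial-equation manipulations, with $\ddot\cf=\dot\cf$ read as a linear constant-co\-ef\-fi\-cient equation for $\cf$ viewed as a function of $\jt$. Its general solution is $\cf=q_0+q_1e^\jt$, and since $\cf=e^{(\jx-\jy+\jt)/2}$ is positive, neither ``$q_0\le0$ and $q_1\le0$'' nor ``$q_0=q_1=0$'' can hold, so at least one of $q_0,q_1$ is positive. Next, the second formula in (\ref{tds}) turns $\ddot\cf=\dot\cf$ into $2\ddot\jx-2\ddot\jy+(\dot\jx-\dot\jy)^2-1=0$; using (\ref{xdd}.i) to remove $\ddot\jx$ gives (\ref{eta}.ii), and then substituting (\ref{eta}.ii) into (\ref{xdd}.ii) and gathering the terms in $\dot\jy$ gives (\ref{eta}.i). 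No divisions occur here, so both identities hold throughout the interval.

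For the trichotomy I would first regard (\ref{eta}.ii), that is $\ddot\jy=(\dot\jy-\dot\jx)\dot\jy$, as a first-order equation for $\dot\jy$; by uniqueness (its right-hand side is locally Lip\-schitz in $\dot\jy$), either $\dot\jy\equiv0$ or $\dot\jy$ vanishes nowhere. If $\dot\jy\equiv0$, then $\jy$ is constant and (\ref{eta}.i) reduces to $\jy\hs e^{\jy-\jx}=0$, so $\jy\equiv0$ — this is case (i). If $\dot\jy$ vanishes nowhere, I would integrate (\ref{eta}.ii) explicitly: from (\ref{tds}), $\dot\jx-\dot\jy=2\dot\cf\cf^{-1}-1=(q_1e^\jt-q_0)\cf^{-1}$, so (\ref{eta}.ii) reads $\ddot\jy=(q_0-q_1e^\jt)\cf^{-1}\dot\jy$; integrating the logarithmic derivative gives $\dot\jy=c_2e^\jt\cf^{-2}$ with a nonzero constant $c_2$ and, when $q_1\ne0$, one more integration gives $\jy=c_3-c_2(q_1\cf)^{-1}$. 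Combined with $e^{\jy-\jx}=e^\jt\cf^{-2}$ (from $\jx-\jy+\jt=2\log\cf$), this exhibits $\jy,\dot\jy,\dot\jx$ and $e^{\jy-\jx}$ as explicit rational functions of $\cf$. If $q_1=0$ we are in case (ii).

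There remains the case $\dot\jy$ nowhere zero with $q_1\ne0$, where the goal is $q_0=0$. Here I would bring in the two equations not yet used, (\ref{int}) and (\ref{xdd}.iii): substituting (\ref{eta}.i) into (\ref{int}) expresses $\fy$ as a rational function of $\cf$, and that same $\fy$ must in addition satisfy (\ref{eta}.i) as an ODE and satisfy (\ref{xdd}.iii); writing out these compatibility conditions, clearing denominators (for $q_0\ne0\ne q_1$ the coefficient of $\fy$ in each equation is nowhere zero, so this is legitimate), and matching the coefficients of the resulting polynomial identities in $\cf$ forces $q_0=0$, which is case (iii). This coefficient bookkeeping, which uses $m\ge2$ repeatedly, is the step I expect to be the \emph{main obstacle}; everything else is immediate.

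Finally, for the symmetry clause: by \S\ref{sy} the substitution $\jt\mapsto\tc-\jt$ maps the solution set of (\ref{xdd}) -- (\ref{bdr}) to itself, and a short computation from (\ref{sgf}.i) — using $\jx-\jy+\jt=2\log\cf$ for the original solution — shows that it carries $\cf$ to $e^{-\tc/2}(q_1e^\tc+q_0e^\jt)$, again of the form $q_0'+q_1'e^\jt$; thus $\ddot\cf=\dot\cf$ is preserved, while $q_0'$ is a positive multiple of $q_1$ and $q_1'$ a positive multiple of $q_0$, so $q_1=0\Leftrightarrow q_0'=0$ and $q_0=0\Leftrightarrow q_1'=0$, which interchanges cases (ii) and (iii). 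Case (i), the condition $\jy\equiv0$, is plainly preserved.
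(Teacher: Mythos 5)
Most of your argument is correct, and your route to (\ref{eta}) is in fact a little cleaner than the paper's: you obtain (\ref{eta}.ii) directly from (\ref{tds}) and (\ref{xdd}.i), and then (\ref{eta}.i) by substituting into (\ref{xdd}.ii) with no division, whereas the paper derives (\ref{eta}.i) first and must invoke density of the set $\,\{\fy\ne0\}\,$ (coming from (\ref{xdd}.iii)) to divide by $\,\fy\,$ and reach (\ref{eta}.ii). Your dichotomy for $\,\dot\jy\,$ via uniqueness for the ODE $\,\ddot\jy=(\dot\jy-\dot\jx)\dot\jy$, the identification of $\,\dot\jy\equiv0\,$ with case (i), the explicit integration giving $\,\dot\jy=c_2e^\jt\cf^{-2}\nh$, and the treatment of the symmetry clause via $\,\cf\mapsto e^{\jt-\tc/2}\cf(\tc-\jt)\,$ all check out.

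The genuine gap is precisely the step you flag as the ``main obstacle'': excluding the configuration in which $\,\dot\jy\,$ is nowhere zero and $\,q_0\ne0\ne q_1$. That exclusion is the entire substance of the trichotomy, and you leave it as unexecuted coefficient bookkeeping; as written one cannot tell whether your compatibility conditions actually force $\,q_0q_1=0\,$ or turn out to be identically satisfied. The paper settles this with a single identity: for $\,\eta\,$ equal to the difference of the two sides of (\ref{eta}.i) (so that $\,\eta=2(\ddot\cf-\dot\cf)\fy\hh\cf^{-1}$), the combination $\,2\hs d(\fy\eta)/d\jt-2(m-1)\fy\eta\dot\jx+4(m-1)(\dot\cf-\cf)\fy^2\cf^{-2}\dot\jy\hh\dot\cf\,$ equals $\,-\fy\dot\jy\,$ times the left-hand side of (\ref{int}), as a consequence of (\ref{xdd}) alone. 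With $\,\ddot\cf=\dot\cf\,$ one has $\,\eta=0$, $\,\dot\cf=q_1e^\jt$ and $\,\dot\cf-\cf=-\hh q_0$, so the left-hand side of (\ref{int}) equals $\,4(m-1)\hh q_0q_1e^\jt\fy\hh\cf^{-2}$ wherever $\,\fy\dot\jy\ne0$; imposing (\ref{int}) therefore yields $\,(m-1)\hh q_0q_1\fy^2\dot\jy=0\,$ everywhere, and since $\,m\ge2\,$ while $\,\fy\ne0\,$ on a dense set by (\ref{xdd}.iii), this is exactly the conclusion $\,q_0q_1\dot\jy=0\,$ you need. So your compatibility condition is indeed non-vacuous and does force $\,q_0=0\,$ in the remaining case, but to have a proof you must either import this identity or actually carry your rational-function computation through to the end; the surrounding steps, correct as they are, do not substitute for it.
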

\begin{proof}[Proof]Let us fix a solution $\,(\jx,\jy,\fy)\,$ of (\ref{xdd}). 
For the function $\,\eta\,$ defined to be the difference between the left-hand 
and right-hand sides of (\ref{eta}.i), subtracting (\ref{xdd}.i) multiplied by 
$\,\fy/2\,$ from (\ref{xdd}.ii) we see, using (\ref{tds}.b) and (\ref{xdd}), 
that $\,\eta=2(\ddot\cf-\dot\cf)\hh\fy\hh\cf^{-1}\nnh$. Also, by (\ref{xdd}), 
the expression
\[
2\hh(\fy\eta)\dot{\,}\nh-2(m-1)\hs\fy\eta\dot\jx
+4(m-1)(\dot\cf-\cf)\hs\fy^2\cf^{-2}\dot\jy\hh\dot\cf
\]
is the product of $\,-\hs\fy\dot\jy\,$ and the left-hand side of 
(\ref{int}), as (\ref{tds}.a) gives 
$\,4(\dot\cf-\cf)\hh\cf^{-2}\dot\cf=(2\hh\dot\cf\cf^{-1})^2\nh
-2(2\hh\dot\cf\cf^{-1})=(\dot\jx-\dot\jy+1)(\dot\jx-\dot\jy-1)$. From 
(\ref{xdd}) -- (\ref{int}) with $\,\ddot\cf=\dot\cf\,$ we thus get 
(\ref{eta}.i) (that is, vanishing of $\,\eta$) and
\begin{equation}\label{dsd}
(\dot\cf-\cf)\hs\fy\dot\jy\hh\dot\cf\,=\,0\hs.
\end{equation}
By (\ref{eta}.i) and (\ref{xdd}.ii), 
$\,(m\dot\jx\dot\jy-\dot\jy^2)\hs\fy=\dt\dot\jy+\jy\hs e^{\jy-\jx}
=[(m-1)\dot\jx\dot\jy-\ddot\jy]\hs\fy$ while, from (\ref{xdd}.iii), 
$\,\fy\ne0\,$ on some dense subset of the domain interval. This proves 
(\ref{eta}.ii).

On the other hand, if $\,\ddot\cf=\dot\cf$, then 
$\,\dot\cf=q_1^{\phantom j}e^\jt$ and 
$\,\cf=q_0^{\phantom j}+q_1^{\phantom j}e^\jt$ with constants 
$\,q_0^{\phantom j},\hs q_1^{\phantom j}$, which cannot be both nonpositive, 
since $\,\cf>0\,$ due to (\ref{sgf}.i). Under the hypotheses of the lemma, 
equation (\ref{dsd}) gives rise (in view of analyticity of the solution 
$\,(\jx,\jy,\fy)\,$ of (\ref{xdd}) on every connected component of the dence 
set on which $\,\fy\ne0$) to four possible cases: $\,\fy=0$, $\,\dot\jy=0$, 
$\,\dot\cf=0\,$ and $\,\dot\cf=\cf$. The first one is excluded by 
(\ref{xdd}.iii); the second, as a consequence of (\ref{xdd}.ii), amounts to 
requiring that $\,\jy=0\hs$; the third case gives (ii); the fourth -- (iii). 

If (\ref{xdd}) -- (\ref{bdr}) are assumed, the substitution of 
$\,\tc\nnh-\hs\jt\,$ for $\,\jt\,$ obviously preserves condition (i) while, by 
(\ref{sgf}.i), it causes $\,\cf\,$ to be replaced with the function 
$\,\jt\mapsto e^{\jt\hs-\hs\tc/2}\cf(\tc\nnh-\hs\jt)$, which completes the 
proof.
\end{proof}

\section{Case (i) in Lemma~\ref{snses}}\label{pp}
Let us consider the case $\,\jy=0\,$ in Lemma~\ref{snses}. In view of 
(\ref{sgf}.ii), this is nothing else than vanishing of the normalized 
sol\-i\-ton function $\,\ef$ in Theorem~\ref{ckcrs} (or -- more precisely 
-- in its local version; see \S\ref{kk}). Our construction thus leads now to 
Ein\-stein metrics $\,\bg$, cf.\ \S\ref{me}.

We describe below the solutions $\,(\jx,\jy,\fy)\,$ of the system formed by 
(\ref{xdd}) -- (\ref{int}) and equation $\,\jy=0\,$ on any open interval; 
translating the variable $\,\jt\,$ (as in \S\ref{sy}), we may assume that the 
interval contains $\,0$.
\begin{theorem}\label{ruyez}For all solutions $\,(\jx,\jy,\fy)\,$ of\/ 
{\rm(\ref{xdd})} -- \,{\rm(\ref{int})} such that\/ $\,\jy=0$, on an open 
interval containing\/ $\,0$, the initial data
\begin{equation}\label{ini}
(\jx_0^{\phantom j},\,\jy_0^{\phantom j},\,\fy{}_0^{\phantom j},\,
\dot\jx_0^{\phantom j},\,\dot\jy_0^{\phantom j},\,\dt{}_0^{\phantom j})\,
=\,(x(0),\,y(0),\,\fy(0),\,\dot\jx(0),\,\dot\jy(0),\,\dt(0))
\end{equation}
satisfy the conditions
\begin{equation}\label{war}
\jy_0^{\phantom j}\nh=\dot\jy_0^{\phantom j}\nh=0\hh,\hskip11pt
2\hh\dot\jx_0^{\phantom j}\dt{}_0^{\phantom j}\nh
=[(2m-1)\dot\jx_0^2\nh+1\hs]\hs\fy{}_0^{\phantom j}\nh
+2\hh e^{-\jx_0^{\phantom j}}\nh-2m\hs.
\end{equation}
Conversely, every choice of the data\/ {\rm(\ref{ini})} satisfying 
{\rm(\ref{war})} is realized by a unique solution\/ $\,(\jx,\jy,\fy)\,$ of\/ 
{\rm(\ref{xdd})} -- \,{\rm(\ref{int})} with\/ $\,\jy=0$, defined on a maximal 
interval containing\/ $\,0$. One then has
\begin{equation}\label{xez}
\jx=\jx_0^{\phantom j}\nh+2\hs\log\hh|\Theta|\hs,\hskip10pt\mathrm{where}
\hskip8pt\Theta=\cosh\,(\jt/2)+\dot\jx_0^{\phantom j}\hh\sinh\,(\jt/2)\hs,
\end{equation}
$\,\jy=0$, and\/ $\,\fy\,$ is the unique solution of the first-or\-der 
linear equation 
\begin{equation}\label{txf}
2\hh\dot\jx\dt=[(2m-1)\dot\jx^2\nh+1\hs]\hs\fy+2\hh e^{-\jx}\nh-2m
\end{equation}
with the initial conditions\/ $\,\fy(0)=\fy{}_0^{\phantom j}$, 
$\,\dt(0)=\dt{}_0^{\phantom j}$.

The linear equation\/ {\rm(\ref{txf})} has an obvious integrating factor, 
which allows us to rewrite it, for\/ $\,\jt\hn\,$ with\/ 
$\,\Theta(t)\dot\Theta(t)\ne0$, as\/ $\,(G\hh\fy)\dot{\,}\hs=\hs F\nh$, where
\begin{equation}\label{get}
G\,=\,2(\Theta^{2m-1}\dot\Theta)^{-1},\hskip22ptF\,
=\,(\Theta^m\dot\Theta)^{-2}(e^{-\jx_0^{\phantom j}}\nh-m\hh\Theta^2)\hh.
\end{equation}
\end{theorem}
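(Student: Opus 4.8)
The plan is to exploit the drastic collapse of the system when $\jy\equiv0$: then (\ref{xdd}.ii) holds identically (both sides vanish), (\ref{xdd}.i) reduces to the single autonomous equation $2\hh\ddot\jx=1-\dot\jx^2$, and the first-order constraint (\ref{int}) becomes exactly (\ref{txf}). Consequently a triple $(\jx,0,\fy)$ solves (\ref{xdd}) -- (\ref{int}) on an interval precisely when $\jx$ solves $2\hh\ddot\jx=1-\dot\jx^2$, the function $\fy$ solves the second-order \emph{linear} equation (\ref{xdd}.iii), and (\ref{txf}) holds at one point -- equivalently, since $e^\jx$ times the left-hand side of (\ref{int}) is an integral of (\ref{xdd}) (the remark preceding Theorem~\ref{ckcrs}), at every point -- of the interval. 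The first assertion of the theorem follows at once: for a solution with $\jy\equiv0$ one has $\jy_0=\dot\jy_0=0$ trivially, and (\ref{txf}) evaluated at $\jt=0$ is precisely the remaining identity in (\ref{war}).

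For the converse, fix data (\ref{ini}) subject to (\ref{war}). First I would integrate the $\jx$-equation: it is linearized by $\Theta=e^{(\jx-\jx_0)/2}$ into $\ddot\Theta=\Theta/4$ with $\Theta(0)=1$, $\dot\Theta(0)=\dot\jx_0/2$, whose unique solution is $\Theta=\cosh(\jt/2)+\dot\jx_0\sinh(\jt/2)$; hence $\jx=\jx_0+2\log|\Theta|$ on the connected component $I$ of $\{\Theta\ne0\}$ containing $0$ (on which $\Theta>0$), and the solution admits no extension past the endpoints of $I$, where $\jx\to-\infty$. This gives the stated formula for $\jx$ and identifies the maximal interval. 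Next let $\fy$ be the solution on $I$ of the linear equation (\ref{xdd}.iii) with $\fy(0)=\fy_0$, $\dt(0)=\dt{}_0$; since its coefficients -- formed from $\jx$ and $\dot\jx=2\dot\Theta/\Theta$ -- are smooth on all of $I$, this $\fy$ is defined throughout $I$. Then $(\jx,0,\fy)$ satisfies (\ref{xdd}), while (\ref{txf}) holds at $\jt=0$ by (\ref{war}); being (\ref{int}) with $\jy\equiv0$, and with $e^\jx$ times the left-hand side of (\ref{int}) an integral of (\ref{xdd}), (\ref{txf}) therefore holds on all of $I$. This proves existence on the maximal interval $I$, and uniqueness among solutions with $\jy\equiv0$ is immediate from uniqueness for the initial-value problems (\ref{xdd}.i) and (\ref{xdd}.iii).

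It remains to verify that $\fy$ is the \emph{unique} solution of (\ref{txf}) with the prescribed $\fy(0)$ and $\dt(0)$; that it is \emph{a} such solution was shown above. Given any $C^2$ solution $\psi$ of (\ref{txf}) on $I$ with $\psi(0)=\fy_0$, $\dot\psi(0)=\dt{}_0$, I would differentiate (\ref{txf}) in $\jt$, substitute $2\hh\ddot\jx=1-\dot\jx^2$, and then use (\ref{txf}) once more to eliminate the resulting $\dot\jx\dot\psi$ term: the $\dot\jx$-dependent contributions cancel, leaving exactly (\ref{xdd}.iii) for $\psi$ at every point of $I$ where $\dot\jx\ne0$. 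Since $\dot\jx=2\dot\Theta/\Theta$ and $\dot\Theta$ is a nontrivial solution of $\ddot u=u/4$, its zeros -- hence those of $\dot\jx$ -- are isolated in $I$, so $\psi$ satisfies (\ref{xdd}.iii) on all of $I$ by continuity, and $\psi=\fy$ by uniqueness for that second-order linear problem. (This also covers the degenerate case $\dot\jx_0=0$, where (\ref{war}) constrains $\fy_0$ but leaves $\dt{}_0$ free.) Finally, for the integrating-factor form (\ref{get}) I would divide (\ref{txf}) by $2\hh\dot\jx$ on $\{\Theta\dot\Theta\ne0\}$ and check, via $\dot\jx=2\dot\Theta/\Theta$, $e^{-\jx}=e^{-\jx_0}\Theta^{-2}$ and $\ddot\Theta=\Theta/4$, that $G=2(\Theta^{2m-1}\dot\Theta)^{-1}$ satisfies $\dot G/G=-[(2m-1)\dot\jx^2+1]/(2\hh\dot\jx)$ and that $G(e^{-\jx}-m)/\dot\jx=F$, so that (\ref{txf}) reads $(G\fy)\dot{\,}=F$.

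I anticipate no real difficulty: each step is either elementary ODE theory or a routine (if somewhat laborious) check of explicit identities. The only delicate point is the uniqueness claim for the \emph{degenerate} first-order equation (\ref{txf}), whose leading coefficient $2\hh\dot\jx$ vanishes at the isolated zeros of $\dot\jx$ and possibly at $\jt=0$ itself; this is why the argument routes through the non-degenerate second-order equation (\ref{xdd}.iii) together with the fact that $\dot\jx$ has only isolated zeros.
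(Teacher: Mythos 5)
Your proposal is correct, and the underlying reduction is the same as the paper's: with $\,\jy\equiv0\,$ equation (\ref{xdd}.ii) is vacuous and the system collapses to $\,2\hh\ddot\jx=1-\dot\jx^2$ together with a single scalar equation for $\,\fy$. Where you diverge is in which equation you take as governing $\,\fy$, and hence in how its existence and uniqueness are obtained. The paper treats the degenerate first-order equation (\ref{txf}) as primary, notes that (\ref{xdd}.iii) follows from it and from $\,2\hh\ddot\jx=1-\dot\jx^2$, and settles existence and uniqueness of $\,\fy\,$ by integrating factors -- including a special one, $\,[\cosh(\jt/2)]^{-2m}\coth(\jt/2)$, for the singular case $\,\dot\jx_0^{\phantom j}=0\,$ in which the leading coefficient of (\ref{txf}) vanishes at $\,\jt=0$. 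You instead take the non\-de\-gen\-er\-ate second-order linear equation (\ref{xdd}.iii) as primary, get $\,\fy\,$ on the whole component of $\,\{\Theta\ne0\}\,$ containing $\,0\,$ from standard linear ODE theory, and recover (\ref{txf}) globally from its validity at $\,\jt=0\,$ (that is, from (\ref{war})) via the first-integral property of (\ref{int}) stated before Theorem~\ref{ckcrs}; the uniqueness assertion for (\ref{txf}) itself you then deduce by showing that any solution of (\ref{txf}) satisfies (\ref{xdd}.iii) off the isolated zeros of $\,\dot\jx$, hence everywhere. The computations you invoke -- the linearization $\,\Theta=e^{(\jx-\jx_0^{\phantom j})/2}$, the derivation of (\ref{xdd}.iii) from (\ref{txf}) after eliminating $\,\ddot\jx\,$ and $\,\dot\jx\dot\fy$, and the verification of (\ref{get}) -- all check out. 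Your route costs a little more writing but handles the case $\,\dot\jx_0^{\phantom j}=0\,$ uniformly, with no ad hoc integrating factor; the paper's route is shorter but leaves the singular case to the reader.
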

\begin{proof}[Proof]Our system consists of (\ref{xdd}), (\ref{int}) and the 
condition $\,\jy=0\,$ (which, due to (\ref{xdd}.i) and (\ref{tds}.b), imply 
the equality $\,\ddot\cf=\dot\cf$). In other words, we have two unknown 
functions, $\,\jx\,$ and $\,\fy$, subject to just two equations: 
$\,2\hh\ddot\jx=1-\dot\jx^2$ and (\ref{txf}); note that (\ref{xdd}.iii) 
follows from them. Furthermore, the existence and uniqueness of a solution 
$\,\fy\,$ to (\ref{txf}) with the stated initial conditions are obvious; this 
is so even in the singular case, that is, when 
$\,\dot\jx_0^{\phantom j}\nh=0$, as one easily verifies using the integrating 
factor $\,[\hs\cosh(\jt/2)]^{-2m}\coth(\jt/2)$.
\end{proof}

\newpage
\section{Page's and B\'erard Bergery's examples}\label{pb}
Having fixed $\,m,k\in\bbZ\,$ such that $\,m>k>0\,$ and $\,a\in(-1,0)$, we 
define the data (\ref{ini}) with (\ref{war}), functions $\,\Theta,G,F$, and a 
constant $\,\tc>0$, by 
$\,\jx_0^{\phantom j}\nh=-\hs\log\hh(ka+m)\hh,\,\,\,\,
\jy_0^{\phantom j}\nh=\fy{}_0^{\phantom j}\nh=\dot\jy_0^{\phantom j}\nh=0\hh,
\,\,\,\,\dot\jx_0^{\phantom j}\nh=a\hh,\,\,\,\,\dt{}_0^{\phantom j}\nh=k\hh,$ 
formulae (\ref{xez}) and (\ref{get}), and $\,\tc=2\hh\log\hh[(1-a)/(1+a)]$. 
Thus
\begin{equation}\label{ttt}
\begin{array}{rrcl}
\mathrm{i)}&
2\hh\Theta(\jt)&=&(1+a)\hs e^{\jt/2}\hs+\,(1-a)\hs e^{-\jt/2}\hs>\,0\hs,\\
\mathrm{ii)}&
4\hh\dot\Theta(\jt)&=&(1+a)\hs e^{\jt/2}\hs-\,(1-a)\hs e^{-\jt/2}\hs,\\
\mathrm{iii)}&
\Theta^2\nh-4\hh\dot\Theta^2&=&1\hs-\hs a^2\hs>\,0\hs,\hskip22pt
4\hh\ddot\Theta\,=\,\Theta\,>\,0\hs,\\
\mathrm{iv)}&
F&=&[ka+m-m\hh\Theta^2\hh]\hs(\Theta^m\dot\Theta)^{-2}\nh.
\end{array}
\end{equation}
For the polynomial $\,S\,$ in the variables $\,a,\xi\,$ given by
\[
S(a,\xi)\,=\,\displaystyle{\sum_{j=0}^m\frac{(-1)^j\xi^{2j}}{2j-1}}
[\textstyle{{m-1\choose j}}\hs(ma+k)a
+\textstyle{{m-1\choose j-1}}\hs(ka+m)]\hs,
\]
where $\,\textstyle{{m-1\choose j}}=0\,$ if $\,j<0\,$ or $\,j\ge m$, the 
expression $\,P(a)=a^{-1}S(a,a)$ is a polynomial in the variable $\,a$, for 
which
\begin{equation}\label{sko}
\begin{array}{l}
\ \\
P(0)\,=\,-k\,<\,0\hs,\\
P(-k/m)\,=\,(1-k/m)(1+m/k)\displaystyle{\int_0^{\hskip2ptk/m}}
(1-a^2)^{m-1}\hskip2ptda\,>\,0\hs.
\end{array}
\end{equation}
The first equality in (\ref{sko}) is obvious due to the definition of $\,P$, 
while the second one is easily obtained from our formula for $\,S(a,\xi)\,$ by 
using a binomial expansion of the integrand and the fact that the factor 
$\,ma+k$ in $\,S(a,\xi)\,$ vanishes when $\,a=-k/m$.

As a consequence of (\ref{sko}), we may now fix $\,a\in(-k/m,0)\,$ such that 
$\,P(a)=0$, that is, $\,S(a,a)=0$. Since $\,S(a,\xi)\,$ is an even function of 
$\,\xi$, we also get $\,S(a,-\hh a)=0$.

Replace $\,\jt\in[\hs0,\infty)\,$ by the new variable 
$\,\xi=2\hh\dot\Theta(\jt)/\Theta(\jt)\in[\hh a,1)$. From (\ref{ttt}.i), 
(\ref{ttt}.iii) and (\ref{ttt}.i-ii) it follows, respectively, that this makes 
sense, and that $\,|\hh\xi|<1\,$ and $\,2\hs\dot\xi=1-\xi^2\nh>0$, while 
$\,\xi\to1\,$ as $\,\jt\to\infty$. Thus, $\,\jt\mapsto\xi\,$ is a 
dif\-feo\-mor\-phism of the interval $\,[\hs0,\infty)\,$ onto 
$\,[\hh a,1)$, easily verified -- if one uses (\ref{ttt}.i-ii) again -- to 
send $\,\jt=0\,$ and $\,\jt=\tc$, for our positive constant 
$\,\tc=2\hh\log\hh[(1-a)/(1+a)]$, to $\,\xi=a\,$ and, respectively, 
$\,\xi=-\hs a$. The first part of (\ref{ttt}.iii) gives 
$\,\xi^2\nh=1+(a^2\nh-1)\Theta^{-2}\nnh$, that is, 
$\,\Theta^2\nh=(1-a^2)/(1-\xi^2)$, and 
$\,4\hh\dot\Theta^2\nh=(1-a^2)\xi^2/(1-\xi^2)\,$ (since 
$\,4\hh\dot\Theta^2/\Theta^2\nh=\xi^2$). Expressing $\,G\,$ and $\,F\,$ 
through $\,\xi$, we obtain, from (\ref{get}),
\begin{equation}\label{gef}
G\,=\,\frac{4(1-\xi^2)^m}{(1-a^2)^m\xi}\,,
\end{equation}
as $\,\Theta^{2m-1}\dot\Theta=\Theta^{2m}\dot\Theta/\Theta
=(\Theta^2){}^m\xi/2$, and, in view of (\ref{ttt}.iv),
\begin{equation}\label{fef}
F\,=\,4(1-a^2)^{-m-1}
[(ma+k)a\hs\xi^{-2}\nh-(ka+m)]\hs(1-\xi^2)^m\hh.
\end{equation}
The above formula for $\,S(a,\xi)\,$ and (\ref{fef}) easily show that
\begin{equation}\label{dsx}
4\hh(1-\xi^2)\,d\hs[S(a,\xi)/\xi\hh]\hh/\nh d\hh\xi\,\,
=\,\,(1-a^2)^{m+1}F\hh,
\end{equation}
The relation $\,F=(G\hh\fy)\dot{\,}\,$ in Theorem~\ref{ruyez} amounts to the 
equality $\,2F=(1-\xi^2)\hs d\hs(G\hh\fy)/d\hs\xi$. It is therefore satisfied 
by $\,\fy\,$ such that
\begin{equation}\label{fes}
(1-a^2)(1-\xi^2)\hh\fy\,=\,2S(a,\xi)\hs.
\end{equation}
Consequently, our choice of $\,a\,$ causes this function $\,\fy\,$ of the 
variable $\,\xi$ to vanish for $\,\xi=\pm\hs a$. In other words, treating 
$\,\fy\,$ as a function of $\,\jt$, we have $\,\fy(0)=\fy(\tc)=0$, where 
$\,\tc=2\hh\log\hh[(1-a)/(1+a)]$.

Next, $\,\fy\,$ given by (\ref{fes}) also satisfies the remaining boundary 
conditions (\ref{bdr}). Namely, the definition of $\,S(a,\xi)\,$ and 
(\ref{gef}) -- (\ref{fef}) yield
\[
(1-a^2)^{m+1}(1-\xi^2)^{-m}\xi^2(F\pm kG)\hs
=\hs-\hh4\hh(\xi\pm a)\hs[(ka+m)\hh\xi\mp(ma+k)]\hh,
\]
and so $\,F/G=\pm k\,$ for $\,\xi=\pm a$. Switching to the variable $\,\jt$, 
we obtain $\,F(0)/G(0)=k\,$ and $\,F(\tc)/G(\tc)=-\hh k$. The equalities 
$\,\fy(0)=\fy(\tc)=0$ and $\,F=(G\hh\fy)\dot{\,}\,$ now show that 
$\,\dt(0)=k\,$ and $\,\dt(\tc)=-k$.

Positivity of $\,\fy\,$ on $\,(0,\tc)\,$ is thus reduced to nonvanishing of 
$\,\fy\,$ when $\,\xi\in(a,-\hs a)$. If, however, $\,\fy\,$ vanished for some 
$\,\xi\in(a,-\hs a)$, (\ref{fes}) would -- first -- yield $\,\xi\ne0\,$ (as 
the definition of $\,S(a,\xi)\,$ and the inequalities $\,-k/m<a<0\,$ give 
$\,S(a,0)=-(ma+k)a>0$) and -- secondly -- allow us to assume that 
$\,\xi\in(a,0)\,$ (since $\,S(a,\xi)\,$ is an even function of $\,\xi$). 
Vanishing of $\,\fy$, and consequently of $\,S(a,\xi)/\xi$, for both this 
value of $\,\xi$ and for $\,\xi=a$, would imply vanishing of the derivative 
$\,d\hs[S(a,\xi)/\xi\hh]\hh/\nh d\hh\xi$ somewhere in $\,[a,0)\,$ which, 
combined with (\ref{dsx}), leads to a contradiction: by (\ref{fef}), $\,F<0\,$ 
when $\,a\in(-k/m,0)\,$ and $\,0<|\hh\xi|<1$.

Theorem~\ref{ckcrs} now implies that the solution $\,(\jx,\jy,\fy)\,$ of 
(\ref{xdd}) -- (\ref{int}), corresponding to the above data (\ref{ini}), with 
our $\,a\in(-k/m,0)$, allows us to construct a K\"ah\-ler-Ric\-ci sol\-i\-ton 
on each of the compact complex manifolds $\,M^m_k$. We thus obtain the {\it 
examples found by Page'a\/} (for $\,m=2$) {\it and by B\'erard Bergery\/} 
(if $\,m>2$).

The above constructions can also be found in Besse's book 
\cite[pp.\ 273\hs--275]{besse} and, obviously, the original papers 
\cite{page,berard-bergery}.

\section{Case (ii) in Lemma~\ref{snses}}\label{pd}
Constancy of $\,\cf\,$ in Lemma~\ref{snses} implies that $\,\bg=g/\cf^2$ is a 
K\"ah\-ler metric, since -- in the local version of Theorem~\ref{ckcrs} -- it 
arises from a {\it trivial\/} con\-for\-mal change of the K\"ah\-ler metric 
$\,g$.

In the following description of solutions $\,(\jx,\jy,\fy)\,$ to (\ref{xdd}) 
-- (\ref{int}) with a constant function $\,\cf\,$ we are assuming, without 
loss of generality (cf.\ \S\ref{pk}) that the domain interval contains $\,0$.
\begin{theorem}\label{rusjs}If a solution\/ $\,(\jx,\jy,\fy)\,$ of\/ 
{\rm(\ref{xdd})} -- \,{\rm(\ref{int})} defined on an open interval 
containing\/ $\,0\,$ has the property that the function\/ $\,\cf\,$ is 
constant, then the initial data\/ {\rm(\ref{ini})} must satisfy the conditions
\begin{equation}\label{cnd}
\begin{array}{l}
\dot\jy_0^{\phantom j}\nh-\dot\jx_0^{\phantom j}\nh=1\hh,\hskip24pt
(1-m\hs e^{\jx_0^{\phantom j}\nh-\jy_0^{\phantom j}})\hs
\dot\jy_0^{\phantom j}\nh=\jy_0^{\phantom j}\hh,\\
\dt{}_0^{\phantom j}\nh=[(m-1)\dot\jy_0^{\phantom j}
-m\hh]\hs\fy{}_0^{\phantom j}+m-
e^{\jy_0^{\phantom j}\nh-\jx_0^{\phantom j}}\hh.
\end{array}
\end{equation}
Conversely, any data\/ {\rm(\ref{ini})} with\/ {\rm(\ref{cnd})} are realized 
by a unique solution\/ $\,(\jx,\jy,\fy)\,$ of\/ {\rm(\ref{xdd})} -- 
\,{\rm(\ref{int})} with a constant function\/ $\,\cf$, defined on the whole 
real line. Explicitly,
\begin{equation}\label{xex}
\jx=\jx_0^{\phantom j}\nh-\jt+\dot\jy_0^{\phantom j}(e^\jt\nh-1)\hh,\hskip22pt
\jy=\jy_0^{\phantom j}\nh+\dot\jy_0^{\phantom j}(e^\jt\nh-1)\hh, 
\end{equation}
and\/ $\,\fy\,$ is the unique solution of the first-or\-der linear equation 
\begin{equation}\label{dfe}
\dt=[(m-1)\dot\jy_0^{\phantom j}e^\jt\nh-m\hh]\hs\fy\hs+m
-e^{\jy_0^{\phantom j}\nh-\jx_0^{\phantom j}\nh+\hs\jt}
\end{equation}
with the  initial conditions\/ $\,\fy(0)=\fy{}_0^{\phantom j}$, 
$\,\dt(0)=\dt{}_0^{\phantom j}$.

Equation\/ {\rm(\ref{dfe})} may also be expressed as\/ 
$\,(G\hh\fy)\dot{\,}\hs=\hs F\nh$, where
\begin{equation}\label{gte}
G(\jt)=\exp\,[\hh m\hh\jt\hh-\hh(m-1)\hh\dot\jy_0^{\phantom j}e^\jt\hh]\hh,\hskip7pt
F(\jt)\hs=(m-e^{\jy_0^{\phantom j}\nh-\jx_0^{\phantom j}\nh+\hs\jt})
\hskip1.2ptG(\jt)\hh.
\end{equation}
\end{theorem}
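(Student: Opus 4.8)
The plan is to use the hypothesis that $\cf$ is constant to integrate the system (\ref{xdd}) -- (\ref{int}) in closed form — reducing it, much as in Theorem~\ref{ruyez} for case (i), to the explicit formulas (\ref{xex}) for $\jx,\jy$ together with a single linear first-order equation for $\fy$ — and then to run this in reverse for the converse. The first reductions are forced and elementary. Since $\cf=e^{(\jx-\jy+\jt)/2}$ by (\ref{sgf}.i), constancy of $\cf$ amounts to $\jx-\jy+\jt$ being constant, i.e.\ to $\dot\jy-\dot\jx=1$ holding identically; this yields the first relation in (\ref{cnd}) on evaluation at $\jt=0$, and also $\jy-\jx\equiv\jy_0-\jx_0+\jt$, hence $e^{\jy-\jx}\equiv e^{\jy_0-\jx_0+\jt}$. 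As $\ddot\cf=\dot\cf=0$, Lemma~\ref{snses} applies, and combining $\dot\jy-\dot\jx=1$ with (\ref{eta}.ii) — equivalently, with (\ref{xdd}.i), which now reads $2\ddot\jx=2\dot\jy$ — gives $\ddot\jy=\dot\jy$. Thus $\dot\jy=\dot\jy_0e^\jt$, and two integrations produce (\ref{xex}); in particular $\jx$ and $\jy$ are defined on all of $\bbR$.

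The one step that is not purely mechanical is pinning down $\fy$. With $\dot\jx=\dot\jy-1$ and $\ddot\jy=\dot\jy$, equation (\ref{eta}.i) becomes $\dot\jy\,\dt=[(m-1)\dot\jy-m]\,\dot\jy\,\fy-\jy\,e^{\jy-\jx}$, while substituting $\dot\jx=\dot\jy-1$ into the first integral (\ref{int}) and using the factorization $(m-1)\dot\jy^2-(2m-1)\dot\jy+m=(\dot\jy-1)[(m-1)\dot\jy-m]$ turns (\ref{int}) into $(\dot\jy-1)\bigl\{\dt-[(m-1)\dot\jy-m]\fy\bigr\}=(1-\jy)\,e^{\jy-\jx}-m$. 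Multiplying this by $\dot\jy$, using the previous relation to eliminate $\dot\jy\,\dt-[(m-1)\dot\jy-m]\dot\jy\,\fy$, and cancelling the factor $e^{\jy-\jx}>0$ collapses everything to the identity $\jy=\dot\jy\,(1-m\,e^{\jx-\jy})$, whose value at $\jt=0$ is exactly the second relation in (\ref{cnd}). Putting this identity back into the above form of (\ref{eta}.i) gives $\dot\jy\bigl\{\dt-[(m-1)\dot\jy-m]\fy-m+e^{\jy-\jx}\bigr\}=0$, which is (\ref{dfe}) at every point where $\dot\jy\ne0$; since $\dot\jy=\dot\jy_0e^\jt$ vanishes either nowhere or everywhere, the only remaining case is $\dot\jy_0=0$, where $\jy\equiv\jy_0$, the identity forces $\jy_0=0$, and (\ref{int}) itself reduces directly to (\ref{dfe}). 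Hence (\ref{dfe}) holds throughout, and evaluating it at $\jt=0$ gives the third relation in (\ref{cnd}). The restatement (\ref{gte}) is simply (\ref{dfe}) multiplied by the integrating factor $G=\exp[m\jt-(m-1)\dot\jy_0e^\jt]$, for which $\dot G/G=m-(m-1)\dot\jy_0e^\jt$.

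For the converse I would start from data (\ref{ini}) satisfying (\ref{cnd}), define $\jx,\jy$ by (\ref{xex}), and let $\fy$ be the unique solution of the linear equation (\ref{dfe}) with $\fy(0)=\fy_0$; this solution exists on all of $\bbR$ because the coefficients of (\ref{dfe}) are smooth there, and the third relation in (\ref{cnd}) then forces $\dt(0)=\dt_0$. One checks at once that $\jx-\jy+\jt\equiv\jx_0-\jy_0$, so $\cf$ is constant, and that $\dot\jy-\dot\jx\equiv1$, $\ddot\jy\equiv\dot\jy$; moreover the second relation in (\ref{cnd}) says precisely that the identity $\jy=\dot\jy\,(1-m\,e^{\jx-\jy})$ holds at $\jt=0$, and since along (\ref{xex}) both sides of that identity are functions of the form $\dot\jy_0e^\jt+\text{const}$ with the same leading term, the second relation in (\ref{cnd}) equates the two constants, so the identity holds for all $\jt$. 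Granting these facts, each of (\ref{xdd}.i) -- (\ref{xdd}.iii) and (\ref{int}) becomes, through the equivalences assembled in the necessity half — with one extra differentiation of (\ref{dfe}) needed for (\ref{xdd}.iii) — either a consequence of (\ref{dfe}) or the quoted identity, hence holds; and uniqueness is immediate since (\ref{xex}) is forced and (\ref{dfe}) has a unique solution with prescribed $\fy(0)$. I do not expect a genuine obstacle: once $\dot\jy-\dot\jx=1$ is extracted, linearizing (\ref{xdd}.i) -- (\ref{xdd}.ii), and the first integral (\ref{int}) is used to supply the one missing scalar relation, the rest is bookkeeping; the only place calling for care is the division by $\dot\jy$, handled by the dichotomy that $\dot\jy$ either vanishes identically or is nowhere zero.
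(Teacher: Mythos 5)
Your argument is correct, and its overall skeleton is the same as the paper's: extract $\,\dot\jy-\dot\jx=1\,$ from constancy of $\,\cf$, integrate (using $\,\ddot\jy=\dot\jy$) to get (\ref{xex}), reduce everything that remains to the single linear equation (\ref{dfe}), and reverse the steps for the converse. Where you genuinely diverge is in which equations supply the second relation of (\ref{cnd}) and equation (\ref{dfe}). The paper assumes $\,\dot\jy_0^{\phantom j}\ne0$, divides (\ref{xdd}.ii) by $\,\dot\jy\,$ to obtain the intermediate identity (\ref{dfm}), differentiates it, and substitutes into the second-order equation (\ref{xdd}.iii) to extract both the missing scalar relation and (\ref{dfe}); the degenerate case $\,\dot\jy_0^{\phantom j}=0\,$ is then routed separately through Theorem~\ref{ruyez}. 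You instead combine (\ref{eta}.i) (legitimately available from Lemma~\ref{snses}, since $\,\ddot\cf=\dot\cf=0$) with the first integral (\ref{int}), eliminating $\,\dt-[(m-1)\dot\jy-m]\hh\fy\,$ algebraically to land on the pointwise identity $\,\jy=\dot\jy\hh(1-m\hh e^{\jx-\jy})$ -- with no differentiation, and with the case $\,\dot\jy\equiv0\,$ absorbed into the same computation rather than treated by appeal to an earlier theorem; (\ref{xdd}.iii) is then not needed at all in the necessity half and is recovered in the converse from (\ref{dfe}) and its derivative, exactly as in the paper. The trade-off: the paper's route never touches (\ref{int}) until the final direct verification, while yours leans on (\ref{int}) as the source of the second condition in (\ref{cnd}); both are complete, and your elimination is arguably the tidier piece of bookkeeping. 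All the algebra you quote (the factorization $\,(m-1)\dot\jy^2-(2m-1)\dot\jy+m=(\dot\jy-1)[(m-1)\dot\jy-m]$, the cancellation of $\,e^{\jy-\jx}$, and the constants-comparison that propagates the second relation of (\ref{cnd}) from $\,\jt=0\,$ to all $\,\jt\,$ in the converse) checks out.
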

\begin{proof}[Proof]Constancy of $\,\cf\,$ and (\ref{tds}.a) give 
$\,\dot\jy-\dot\jx=1$, so that, by (\ref{eta}.ii), $\,\ddot\jy=\dot\jy$, which 
yields (\ref{xex}) and the first equality in (\ref{cnd}).

To prove (\ref{dfe}), consider two possible cases. In the first one, 
$\,\dot\jy_0^{\phantom j}\nh=0$. From (\ref{xex}) we thus obtain constancy of 
$\,\jy\,$ and the equality $\,\jx=\jx_0^{\phantom j}\nh-\jt$. Using 
(\ref{xdd}.ii) we see that $\,\jy=0$. Conclusion (\ref{txf}) in 
Theorem~\ref{ruyez} for $\,\jx=\jx_0^{\phantom j}\nh-\jt\,$ now implies 
(\ref{dfe}) for $\,\jy_0^{\phantom j}\nh=\dot\jy_0^{\phantom j}\nh=0$.

In the remaining case, $\,\dot\jy_0^{\phantom j}\nh\ne0$. Dividing 
(\ref{xdd}.ii) by $\,\dot\jy=\ddot\jy=\dot\jy_0^{\phantom j}e^\jt\nh\ne0$, and 
then replacing $\,\dot\jx\,$ (or, $\,\jy\hs e^{\jy-\jx}$) with 
$\,\dot\jy_0^{\phantom j}e^\jt\nh-1\,$ (or, respectively, 
$\,[\jy_0^{\phantom j}\nh+\dot\jy_0^{\phantom j}(e^\jt\nh-1)]\hs 
e^{\jy_0^{\phantom j}\nh-\jx_0^{\phantom j}\nh+\hs\jt}$), we obtain
\begin{equation}\label{dfm}
\dt\,=\,[(m-1)\dot\jy_0^{\phantom j}e^\jt\nh-m\hh]\hs\fy\,
-\,e^{\jy_0^{\phantom j}\nh-\jx_0^{\phantom j}}(e^\jt\nh-1
+\jy_0^{\phantom j}/\dot\jy_0^{\phantom j})\hh.
\end{equation}
Let us use (\ref{dfm}) and the equation obtained by differentiating 
(\ref{dfm}) to express $\,\dt\,$ and $\,\dd\,$ in terms of 
$\,\jx_0^{\phantom j},\jy_0^{\phantom j},\dot\jy_0^{\phantom j}$ and $\,\fy$. 
The resulting expressions and the equality 
$\,\jx=\dot\jy_0^{\phantom j}e^\jt\nh-1\,$ allow us to rewrite the difference 
between the two sides of (\ref{xdd}.iii), so as to obtain both the second 
equality of (\ref{cnd}), and (\ref{dfe}). We also see that the second equality 
in (\ref{cnd}), combined with (\ref{dfe}), implies (\ref{dfm}), and hence 
(\ref{xdd}.iii). The third equality of (\ref{cnd}) is in turn obvious from 
(\ref{dfe}) for $\,\jt=0$.

We have thus proved (\ref{cnd}) and (\ref{dfe}), as well as the fact that they 
imply (\ref{xdd}.iii). They similarly yield the remaining equations in 
(\ref{xdd}) -- (\ref{int}). More precisely, (\ref{xdd}.ii) is nothing else 
than (\ref{dfm}) (that is, (\ref{dfe})) multiplied by 
$\,\dot\jy=\ddot\jy=\dot\jy_0^{\phantom j}e^\jt\nh\ne0$, (\ref{xdd}.i) is an 
obvious consequence of (\ref{xex}) -- (\ref{dfe}), while (\ref{int}) can 
easily be verified directly.
\end{proof}

\section{The Koi\-\hbox{so\hskip.7pt-}\hskip0ptCao examples}\label{pk}
We fix $\,m,k\in\bbZ\,$ with $\,m>k>0\,$ and define $\,S:(0,\infty)\to\bbR\,$ 
by
\begin{equation}\label{sae}
S(a)\,\,=\,\displaystyle{\int_0^{\hskip3.9ptQ}}[\kp^{m-1}+(k-m)\kp^m]\hh e^{-\hh a\kp}\hs d\kp\hh,
\hskip9pt\text{\rm where}\hskip7ptQ\,\,=\,\,\displaystyle{\frac{m+k}{m-k}}\,.
\end{equation}
One then has -- as shown below -- the inequalities
\begin{equation}\label{nrw}
S(0)\,<\,0\,<\,S(a)\hskip9pt\text{\rm whenever}\hskip7pta\,\ge\,m(m-k)\hh.
\end{equation}
We may thus choose $\,a\in(0,m(m-k))\,$ such that $\,S(a)=0$. Next, we 
introduce data (\ref{ini}) satisfying (\ref{cnd}) by setting
$\,\jy_0^{\phantom j}\nh=(1-m/k)a/(m-1)$, 
$\,\jx_0^{\phantom j}\nh=\jy_0^{\phantom j}\nh-\log\hh(m-k)$, 
$\,\fy{}_0^{\phantom j}\nh=0$, $\,\dot\jy_0^{\phantom j}\nh=a/(m-1)$, 
$\,\dot\jx_0^{\phantom j}\nh=\dot\jy_0^{\phantom j}\nh-1$, and 
\hbox{\hskip0pt$\dt{}_0^{\phantom j}\nh=k$}. The solution 
$\,(\jx,\jy,\fy):\bbR\to\rtr$ of (\ref{xdd}) -- (\ref{int}) with a 
constant function $\,\cf$, corresponding to these data as in 
Theorem~\ref{rusjs}, then also satisfies the boundary conditions (\ref{bdr}) 
with $\,\tc=\log\hh[(m+k)/(m-k)]$. In fact -- first, our choice of 
$\,\fy{}_0^{\phantom j}\nh$ and $\,\dt{}_0^{\phantom j}$ gives $\,\fy(0)=0\,$ 
and $\,\dt(0)=k$. Second, $\,\fy(\tc)=0\,$ since, due to 
Theorem~\ref{rusjs}, 
$\,G(\tc)\hs\fy(\tc)=\textstyle{\int_0^{\tc}}F(\jt)\,d\jt\,$ while, evaluating 
the last integral in terms of the new variable $\,\kp=e^\jt$ and using the 
equality $\,\dot\jy_0^{\phantom j}\nh=a/(m-1)\,$ along with (\ref{sae}) and 
(\ref{gte}), as well as our choice of $\,a$, we obtain 
$\,\textstyle{\int_0^{\tc}}F(\jt)\,d\jt=S(a)=0$. Third, the equalities 
$\,(G\hh\fy)\dot{\,}\hs=\hs F\,$ (in Theorem~\ref{rusjs}), $\,\fy(\tc)=0\,$ 
and (\ref{gte}) imply that $\,\dt(\tc)=F(\tc)/G(\tc)
=m-e^{\jy_0^{\phantom j}\nh-\jx_0^{\phantom j}\nh+\hs\tc}\nnh$, and so 
$\,\dt(\tc)=-\hh k$, since 
$\,e^{\jy_0^{\phantom j}\nh-\jx_0^{\phantom j}}\nnh=m-k\,$ and 
$\,e^{\tc}\nh=(m+k)/(m-k)$. Fourth, the derivative of $\,G\hh\fy$, that is, 
the function $\,F\,$ given by (\ref{gte}), has only one zero in $\,\bbR\,$ 
and, consequently, $\,\fy\,$ may have at most two zeros; the relations 
$\,\fy(0)=\fy(\tc)=0<\dt(0)$ thus imply the inequality $\,\fy>0\,$ on 
$\,(0,\tc)$.

Theorem~\ref{ckcrs} states in turn that the above solution 
$\,(\jx,\jy,\fy)\,$ leads to a construction of a K\"ah\-ler-Ric\-ci 
sol\-i\-ton on each of the compact complex manifolds $\,M^m_k$. These are 
the {\it Koi\-\hbox{so\hskip.7pt-}\hskip0ptCao examples}.

\vskip4pt
\begin{proof}[Proof of the inequality\/ {\rm(\ref{nrw})}]For 
$\,m,k,a,Q\in\bbR\,$ such that $\,m\ge1$ and $\,Q\in(0,\infty)$, defining 
$\,S(a)\,$ by (\ref{sae}) (even without assuming that $\,Q=(m+k)/(m-k)$), we 
have $\,S(a)=H_{m-1}^{\phantom i}+(k-m)H_m^{\phantom i}$, where 
$\,H_m^{\phantom i}=\int_0^Q\kp^me^{-\hh a\kp}\hs d\kp\,$ for $\,m\ge0$. 
If $\,m\ge1$, integration by parts yields 
$\,mH_{m-1}^{\phantom i}=aH_m^{\phantom i}+Q^me^{-\hh aQ}\nnh$, and so 
$\,mS(a)=[\hs a-m(m-k)]\hh H_m^{\phantom i}+Q^me^{-\hh aQ}\nnh$. Positivity of 
$\,H_m^{\phantom i}$ thus gives $\,S(a)>0\,$ for $\,a\ge m(m-k)$. However, 
$\,H_m^{\phantom i}=Q^{m+1}/(m+1)\,$ if $\,a=0\,$ and $\,m\ge1$. Our 
formula for $\,mS(a)$ now states that 
$\,m(m+1)Q^{-m}S(0)=-\hh m(m-k)\hh Q+m+1$. With $\,Q=(m+k)/(m-k)\,$ and 
$\,m>k\ge1$, it follows that $\,S(0)<0$.
\end{proof}

\section{Case (iii) in Lemma~\ref{snses}}\label{pt}
The assumptions of Lemma~\ref{rhxpl} will still hold if the quintuple 
$\,m,k,\tc,\jx,\fy$ is replaced by $\,m,k,\tc,\hat\jx,\hat\fy$, where 
$\,\hat\jx(\jt)=\jx(\tc\nnh-\jt)\,$ and $\,\hat\fy(\jt)=\fy(\tc\nnh-\jt)$. 
Performing the construction described in the proof of Lemma~\ref{rhxpl} for 
$\,m,k,\tc,\hat\jx,\hat\fy$, with the same objects $\,N\nh,h,\ey\nh,M\nh$, the 
same fibre norm, and the same Her\-mit\-i\-an connection in $\,\ey\hs$ as 
before, we obtain a new metric $\,\hatg\,$ and function $\,\hat\jt:M\to\bbR$, 
for which we must use a symbol other than $\,\jt$, since it differs in general 
from the function $\,\jt:M\to\bbR$ associated with the original data 
$\,m,k,\tc,\jx,\fy$. To describe how the pair $\,(g,\jt)\,$ is related to 
$\,(\hatg,\hat\jt)$, we use the dif\-feo\-mor\-phism $\,Z:M\to M\,$ which, 
restricted to $\,\ey\smallsetminus N\nh$, acts in every fibre as the standard 
inversion, that is, the division of any nonzero vector by the square of its 
norm. Letting $\,Z^*\nh\hatg\,$ and $\,Z^*\nh\chi\,$ denote the pull\-back 
of the metric $\,\hatg\,$ under $\,Z\,$ and the composition $\,\chi\circ Z$, 
for any function $\,\chi\,$ on $\,\ey\smallsetminus N\nh$, we then have
\begin{equation}\label{zar}
\mathrm{a)}\hskip8ptZ^*\nh r\,=\,1/r\hs,\hskip15pt
\mathrm{b)}\hskip8ptZ^*\nh\hat\jt=\tc\nnh-\jt\hs,\hskip15pt
\mathrm{c)}\hskip8ptZ^*\nh\hatg=e^{\tc-2\jt}g\hs,
\end{equation}
where, as before, $\,r:\ey\smallsetminus N\to\bbR\,$ is the fibre norm.

In fact, (\ref{zar}.a) is a trivial consequence of the definition of $\,Z$. To 
justify (\ref{zar}.b) -- (\ref{zar}.c), note that (\ref{ddr}) remains valid 
after one has replaced $\,\fy(\jt)\,$ by 
$\,\hat\fy(\jt)=\fy(\tc\hskip-1.6pt-\hh\jt)\,$ and $\,\jt(r)\,$ by 
$\,\hat\jt(r)\hs=\hs\tc\hskip-1.6pt-\hs\jt(1/r)$. This last choice of 
$\,\hat\jt:M\to\bbR\,$ easily yields (\ref{zar}.b). The metric  $\,\hatg\,$ 
arises, in turn, from a modified version of the formulae for $\,g\,$ in the 
proof of Lemma~\ref{rhxpl}; the modification amounts to using 
$\,e^{\hat\jt(r)}$ and $\,\hat\fy(\hat\jt(r))=\fy(\jt(1/r))\,$ instead of 
$\,e^\jt$ and $\,\fy\,$ (that is, instead of \ $\,e^{\jt(r)}$ and 
$\,\fy(\jt(r))$), which gives (\ref{zar}.c).

Relation (\ref{zar}.c) states that $\,Z\,$ is an isometry between the 
Riemannian manifolds $\,(M,e^{\tc-2\jt}g)\,$ and $\,(M,\hatg)$. Thus, 
$\,\hatg\,$ is isometric to a metric resulting from a specific con\-for\-mal 
change of $\,g$.

If, in addition, the original data $\,m,k,\tc,\jx,\fy\,$ arise from a 
solution $\,(\jx,\jy,\fy)\,$ of (\ref{xdd}) -- (\ref{bdr}), the function 
$\,\cf:M\to\bbR\,$ is defined by (\ref{sgf}.i), and $\,\hat\cf\,$ denotes its 
analog for the new solution $\,(\hat\jx,\hat\jy,\hat\fy)\,$ obtained by 
substituting$\,\tc\nnh-\jt\,$ for the variable $\,\jt$, then
\begin{equation}\label{zas}
\mathrm{i)}\hskip8ptZ^*\nh\hat\cf\,=\,e^{-\hh\jt\hs+\tc/2}\nh\cf\hs,
\hskip25pt
\mathrm{ii)}\hskip8ptZ^*\nh(\hatg/\hat\cf^2)\,=\,g/\cf^2.
\end{equation}
The first equality is obvious here due to (\ref{sgf}.i) and (\ref{zar}.b), the 
second -- in view of the first one combined with (\ref{zar}.c) and 
multiplicativity of the operation $\,Z^*$ with respect to products of 
functions and tensor fields.

By (\ref{zas}.ii), applying Theorem~\ref{ckcrs} to these solutions 
$\,(\jx,\jy,\fy)\,$ and $\,(\hat\jx,\hat\jy,\hat\fy)\,$ results in {\it 
compact Ric\-ci sol\-i\-tons\/ $\,(M,g/\cf^2)\,$ and\/ 
$\,(M,\hatg/\hat\cf^2)$, which are isometric to each other}.

If, furthermore, $\,(\jx,\jy,\fy)\,$ represents case (iii) in 
Lemma~\ref{snses} then, according to the final clause of Lemma~\ref{snses}, 
the solution $\,(\hat\jx,\hat\jy,\hat\fy)\,$ is an example of case (ii), and 
so the manifold $\,(M,\hatg/\hat\cf^2)\,$ must be isometric to one of the 
Koi\-\hbox{so\hskip.7pt-}\hskip0ptCao examples. The same therefore holds for 
$\,(M,g/\cf^2)$. We have in this way obtained a proof of Maschler's result  
\cite{maschler}.

Since the  standard inversion of the plane is not hol\-o\-mor\-phic (while 
being an\-ti\-hol\-o\-mor\-phic), $\,Z\,$ transforms the original complex 
structure onto a different one, bi\-hol\-o\-mor\-phic to it.

\begin{bibliography}[References]

\bib{aubin}{article}{
   author={Aubin, Thierry},
   title={\'Equations du type Monge-Amp\`ere sur les vari\'et\'es
   k\"ahl\'eriennes compactes},
   language={French, with English summary},
   journal={Bull. Sci. Math. (2)},
   volume={102},
   date={1978},
   number={1},
   pages={63--95},
   issn={0007-4497},
   review={\MR{494932 (81d:53047)}},
}

\bib{bando-mabuchi}{article}{
   author={Bando, Shigetoshi},
   author={Mabuchi, Toshiki},
   title={Uniqueness of Einstein K\"ahler metrics modulo connected group
   actions},
   conference={
      title={Algebraic geometry, Sendai, 1985},
   },
   book={
      series={Adv. Stud. Pure Math.},
      volume={10},
      publisher={North-Holland},
      place={Amsterdam},
   },
   date={1987},
   pages={11--40},
   review={\MR{946233 (89c:53029)}},
}

\bib{barros-ribeiro}{article}{
   author={Barros, A.},
   author={Ribeiro, E.},
   title={Some characterizations for compact almost Ric\-ci sol\-i\-tons},
   journal={Proc. Amer. Math. Soc. (electronic)},
   date={July 22, 2011},
   issn={1088-6826},
   doi={10.1090/S0002-9939-2011-11029-3},
}

\bib{berard-bergery}{article}{
   author={B{\'e}rard-Bergery, Lionel},
   title={Sur de nouvelles vari\'et\'es riemanniennes d'Ein\-stein},
   language={French},
   conference={
      title={Institut \'Elie Cartan, 6},
   },
   book={
      series={Inst. \'Elie Cartan},
      volume={6},
      publisher={Univ. Nancy},
      place={Nancy},
   },
   date={1982},
   pages={1--60},
   review={\MR{727843 (85b:53048)}},
}

\bib{besse}{book}{
   author={Besse, Arthur L.},
   title={Ein\-stein manifolds},
   series={Ergebnisse der Mathematik und ihrer Grenzgebiete (3) [Results in
   Mathematics and Related Areas (3)]},
   volume={10},
   publisher={Springer-Verlag},
   place={Berlin},
   date={1987},
   pages={xii+510},
   isbn={3-540-15279-2},
   review={\MR{867684 (88f:53087)}},
}
  \bib{bourguignon}{article}{
    title={L'espace des m\'etriques riemanniennes d'une vari\'et\'e compacte},
    author={J.\hskip1.9ptP.\hskip1.9ptBourguignon},
    journal={Th\`ese d'Etat, Universit\'e Paris VII},
    date={1974},
  }

\bib{calabi}{article}{
   author={Calabi, Eugenio},
   title={On K\"ahler manifolds with vanishing canonical class},
   conference={
      title={Algebraic geometry and topology. A symposium in honor of S.
      Lefschetz},
   },
   book={
      publisher={Princeton University Press},
      place={Princeton, NJ},
   },
   date={1957},
   pages={78--89},
   review={\MR{0085583 (19,62b)}},
}

\bib{cao-eg}{article}{
   author={Cao, Huai-Dong},
   title={Existence of gradient K\"ah\-ler-Ric\-ci sol\-i\-tons},
   conference={
      title={Elliptic and parabolic methods in geometry (Minneapolis, MN,
      1994)},
   },
   book={
      publisher={A.\ K.\ Peters},
      place={Wellesley, MA},
   },
   date={1996},
   pages={1--16},
   review={\MR{1417944 (98a:53058)}},
  }

\bib{cao-rp}{article}{
   author={Cao, Huai-Dong},
   title={Recent progress on Ric\-ci sol\-i\-tons},
   conference={
      title={Recent advances in geometric analysis},
   },
   book={
      series={Adv. Lect. Math. (ALM)},
      volume={11},
      publisher={Int. Press, Somerville, MA},
   },
   date={2010},
   pages={1--38},
   review={\MR{2648937 (2011d:53061)}},
  }

\bib{chow}{article}{
   author={Chow, Bennett},
   title={Ric\-ci flow and Ein\-stein metrics in low dimensions},
   conference={
      title={Surveys in differential geometry: essays on Ein\-stein manifolds},
   },
   book={
      series={Surv. Differ. Geom., VI},
      publisher={Int. Press, Boston, MA},
   },
   date={1999},
   pages={187--220},
   review={\MR{1798610 (2001m:53116)}},
}

\bib{dancer-wang}{article}{
   author={Dancer, A.},
   author={Wang, M.},
   title={On Ric\-ci sol\-i\-tons of cohomogeneity one},
   journal={Ann. Global Anal. Geom.},
   volume={39},
   date={2011},
   number={3},
   pages={259--292},
   issn={0232-704X},
   review={},
   doi={10.1007/s10455-010-9233-1},
}

\bib{derdzinski-maschler-lc}{article}{
   author={Derdzinski, A.},
   author={Maschler, G.},
   title={Local classification of conformally-Ein\-stein K\"ah\-ler metrics in
   higher dimensions},
   journal={Proc. London Math. Soc. (3)},
   volume={87},
   date={2003},
   number={3},
   pages={779--819},
   issn={0024-6115},
   review={\MR{2005883 (2004i:53051)}},
   doi={10.1112/S0024611503014175},
}

\bib{derdzinski-maschler-sk}{article}{
   author={Derdzinski, A.},
   author={Maschler, G.},
   title={Special K\"ah\-ler-Ric\-ci potentials on compact K\"ah\-ler manifolds},
   journal={J. Reine Angew. Math.},
   volume={593},
   date={2006},
   pages={73--116},
   issn={0075-4102},
   review={\MR{2227140 (2007b:53150)}},
   doi={10.1515/CRELLE.2006.030},
}

\bib{dillen-verstraelen}{collection}{
   title={Handbook of differential geometry. Vol. I},
   editor={Dillen, Franki J. E.},
   editor={Verstraelen, Leopold C. A.},
   publisher={North-Holland},
   place={Amsterdam},
   date={2000},
   pages={xii+1054},
   isbn={0-444-82240-2},
   review={\MR{1736851 (2000h:53003)}},
}

\bib{federbush}{article}{
   author={Federbush, P.},
   title={Partially alternate derivation of a result of Nelson},
   journal={J. Mathematical Phys.},
   volume={10},
   date={1969},
   pages={50--52},
   doi={10.1063/1.1664760},
}

\bib{feldman-ilmanen-knopf}{article}{
   author={Feldman, Mikhail},
   author={Ilmanen, Tom},
   author={Knopf, Dan},
   title={Rotationally symmetric shrinking and expanding gradient
   K\"ah\-ler-Ric\-ci sol\-i\-tons},
   journal={J. Differential Geom.},
   volume={65},
   date={2003},
   number={2},
   pages={169--209},
   issn={0022-040X},
   review={\MR{2058261 (2005e:53102)}},
}

\bib{fernandez-lopez-garcia-rio}{article}{
   author={Fern{\'a}ndez-L{\'o}pez, M.},
   author={Garc{\'{\i}}a-R{\'{\i}}o, E.},
   title={A remark on compact Ric\-ci sol\-i\-tons},
   journal={Math. Ann.},
   volume={340},
   date={2008},
   number={4},
   pages={893--896},
   issn={0025-5831},
   review={\MR{2372742 (2008j:53077)}},
   doi={10.1007/s00208-007-0173-4},
}

\bib{friedan}{article}{
   author={Friedan, Daniel Harry},
   title={Nonlinear models in $2+\varepsilon$ dimensions},
   journal={Ann. Physics},
   volume={163},
   date={1985},
   number={2},
   pages={318--419},
   issn={0003-4916},
   review={\MR{811072 (87f:81130)}},
  }

\bib{gross}{article}{
   author={Gross, Leonard},
   title={Logarithmic Sobolev inequalities},
   journal={Amer. J. Math.},
   volume={97},
   date={1975},
   number={4},
   pages={1061--1083},
   issn={0002-9327},
   review={\MR{0420249 (54 \#8263)}},
}

\bib{hamilton}{article}{
   author={Hamilton, Richard S.},
   title={Three-manifolds with positive Ric\-ci curvature},
   journal={J. Differential Geom.},
   volume={17},
   date={1982},
   number={2},
   pages={255--306},
   issn={0022-040X},
   review={\MR{664497 (84a:53050)}},
  }

\bib{hamilton-tr}{article}{
   author={Hamilton, Richard S.},
   title={The Ric\-ci flow on surfaces},
   conference={
      title={Mathematics and general relativity},
      address={Santa Cruz, CA},
      date={1986},
   },
   book={
      series={Contemp. Math.},
      volume={71},
      publisher={Amer. Math. Soc.},
      place={Providence, RI},
   },
   date={1988},
   pages={237--262},
   review={\MR{954419 (89i:53029)}},
  }

\bib{ivey}{article}{
   author={Ivey, Thomas},
   title={Ric\-ci sol\-i\-tons on compact three-manifolds},
   journal={Differential Geom. Appl.},
   volume={3},
   date={1993},
   number={4},
   pages={301--307},
   issn={0926-2245},
   review={\MR{1249376 (94j:53048)}},
   doi={10.1016/0926-2245(93)90008-O},
  }

\bib{kobayashi-ochiai}{article}{
   author={Kobayashi, Shoshichi},
   author={Ochiai, suchushiro},
   title={Characterizations of complex projective spaces and hyperquadrics},
   journal={J. Math. Kyoto Univ.},
   volume={13},
   date={1973},
   pages={31--47},
   issn={0023-608X},
   review={\MR{0316745 (47 \#5293)}},
}

\bib{koiso}{article}{
   author={Koiso, Norihito},
   title={On rotationally symmetric Hamilton's equation for
   K\"ah\-ler-Ein\-stein metrics},
   conference={
      title={Recent topics in differential and analytic geometry},
   },
   book={
      series={Adv. Stud. Pure Math.},
      volume={18},
      publisher={Academic Press},
      place={Boston, MA},
   },
   date={1990},
   pages={327--337},
   review={\MR{1145263 (93d:53057)}},
  }

\bib{li-c}{article}{
   author={Li, Chi},
   title={On rotationally symmetric K\"ah\-ler-Ric\-ci sol\-i\-tons},
   journal={preprint arXiv:1004.4049},
   date={2010},
}

\bib{li-xm}{article}{
   author={Li, Xue-Mei},
   title={On extensions of Myers' theorem}, 
   journal={Bull.\ London Math.\ Soc.\ \textbf{27} (1995), 392--396},
}

\bib{maschler}{article}{
   author={Maschler, Gideon},
   title={Special K\"ah\-ler-Ric\-ci potentials and Ric\-ci sol\-i\-tons},
   journal={Ann. Global Anal. Geom.},
   volume={34},
   date={2008},
   number={4},
   pages={367--380},
   issn={0232-704X},
   review={\MR{2447905 (2009h:53169)}},
   doi={10.1007/s10455-008-9114-z},
}
\bib{page}{article}{
   author={Page, Don N.},
   title={A compact rotating gravitational instanton},
   journal={Phys. Lett. B},
   volume={79},
   date={1978},
   number={3},
   pages={235--238},
   issn={0370-2693},
   doi={10.1016/0370-2693(78)90231-9},
}

\bib{perelman}{article}{
    title={The entropy formula for the Ric\-ci flow and its geometric 
applications},
    author={G.\hskip1.9ptPerelman},
    journal={preprint, arXiv:math.DG/0211159},
}

\bib{perelman-f}{article}{
    title={Ric\-ci flow with surgery on three-man\-i\-folds},
    author={G.\hskip1.9ptPerelman},
    journal={preprint, arXiv:math.DG/0303109}
}

\bib{perelman-r}{article}{
    title={
Finite extinction time for the solutions to the Ric\-ci flow on certain 
three-man\-i\-folds},
    author={G.\hskip1.9ptPerelman},
    journal={preprint, arXiv:math.DG/0307245},
}

\bib{pigola-rigoli-rimoldi-setti}{article}{
   author={Pigola, Stefano},
   author={Rigoli, Marco},
   author={Rimoldi, Michele},
   author={Setti, Alberto G.},
   title={Ric\-ci almost sol\-i\-tons},
   journal={Ann. Scuola Norm. Sup. Pisa Cl. Sci. (4)},
   volume={10},
   date={2011},
   number={4},
   pages={757--799},
   doi={10.2422/2036-2145.2011.4.01},
}

\bib{rothaus}{article}{
   author={Rothaus, Oscar S.},
   title={Logarithmic Sobolev inequalities and the spectrum of Schr\"odinger
   operators},
   journal={J. Funct. Anal.},
   volume={42},
   date={1981},
   number={1},
   pages={110--120},
   issn={0022-1236},
   review={\MR{620582 (83f:58080b)}},
   doi={10.1016/0022-1236(81)90050-1},
  }

\bib{tian-pc}{article}{
    author={G.\hskip1.9ptTian},
    journal={private communication},
    date={2005},
}

\bib{tian-zhu}{article}{
   author={Tian, Gang},
   author={Zhu, Xiaohua},
   title={A new holomorphic invariant and uniqueness of 
   K\"ah\-ler-Ric\-ci sol\-i\-tons},
   journal={Comment. Math. Helv.},
   volume={77},
   date={2002},
   number={2},
   pages={297--325},
   issn={0010-2571},
   review={\MR{1915043 (2003i:32042)}},
   doi={10.1007/s00014-002-8341-3},
}

\bib{wang-zhu}{article}{
   author={Wang, Xu-Jia},
   author={Zhu, Xiaohua},
   title={K\"ah\-ler-Ric\-ci sol\-i\-tons on toric manifolds with positive 
   first Chern class},
   journal={Adv. Math.},
   volume={188},
   date={2004},
   number={1},
   pages={87--103},
   issn={0001-8708},
   review={\MR{2084775 (2005d:53074)}},
   doi={10.1016/j.aim.2003.09.009},
}

\bib{yau}{article}{
   author={Yau, Shing Tung},
   title={On the Ricci curvature of a compact K\"ahler manifold and the
   complex Monge-Amp\`ere equation. I},
   journal={Comm. Pure Appl. Math.},
   volume={31},
   date={1978},
   number={3},
   pages={339--411},
   issn={0010-3640},
   review={\MR{480350 (81d:53045)}},
   doi={10.1002/cpa.3160310304},
}

\bib{zhang}{article}{
   author={Zhang, Zhenlei},
   title={On the finiteness of the fundamental group of a compact shrinking
   Ric\-ci sol\-i\-ton},
   journal={Colloq. Math.},
   volume={107},
   date={2007},
   number={2},
   pages={297--299},
   issn={0010-1354},
   review={\MR{2284167 (2007h:53067)}},
   doi={10.4064/cm107-2-9},
}

\end{bibliography}
\end{document}